\DeclareMathOperator{\dom}{dom}
\DeclareMathOperator{\End}{End}
\DeclareMathOperator{\rank}{rank}
\DeclareMathOperator{\Char}{char}
\DeclareMathOperator{\id}{id}
\DeclareMathOperator{\co}{co}
\DeclareMathOperator{\GL}{GL}
\DeclareMathOperator{\Rel}{Rel}
\DeclareMathOperator{\Jac}{Jac}
\renewcommand{\span}{{\rm span}}
\renewcommand{\emptyset}{\varnothing}
\renewcommand{\setminus}{\smallsetminus}
\renewcommand{\subset}{\subseteq}
\def\kk{\mathbbm k}
\def\bes{\begin{equation*}}
\def\ees{\end{equation*}}
\def\beq{\begin{equation} }
\def\eeq{\end{equation} }
\def\ben{\begin{enumerate} }
\def\een{\end{enumerate} }
\def\benum{\begin{enumerate} }
\def\eenum{\end{enumerate} }
\def\x{\ushort X}
\def\ua{\ushort A}
\def\ub{\ushort B}
\def\pp{\ushort P}
\def\ss{\intercal}
\def\xs{\x^\ss}
\def\y{\ushort Y}
\newcommand{\cx}{[\x]}
\newcommand{\Langle}{\!\mathop{<}\!}
\newcommand{\Rangle}{\!\mathop{>}}
\newcommand{\ax}{\Langle\x\Rangle}
\newcommand{\axy}{\Langle\x,\y\Rangle}
\newcommand{\axs}{\Langle\x,\xs\Rangle}
\newcommand{\fr}[1]{\kk\Langle #1 \Rangle}
\newcommand{\frc}[1]{\C\Langle #1 \Rangle}
\def\bmat{\left[\begin{array}{ccccccccccccccc} }
\def\emat{\end{array}\right]}
\def\bmat{\begin{bmatrix}}
\def\emat{\end{bmatrix}}
\def\beq{\begin{equation}}
\def\eeq{\end{equation}}
\def\barr{\begin{array}}
\def\earr{\end{array}}
\def\moverlay{\mathpalette\mov@rlay}
\def\mov@rlay#1#2{\leavevmode\vtop{%
    \baselineskip\z@skip \lineskiplimit-\maxdimen
    \ialign{\hfil$#1##$\hfil\cr#2\crcr}}}
\def\Ddots{\mathinner{\mkern1mu\raise\p@
\vbox{\kern7\p@\hbox{.}}\mkern2mu
\raise4\p@\hbox{.}\mkern2mu\raise7\p@\hbox{.}\mkern1mu}}
\newcommand{\plangle}{\moverlay{(\cr<}}
\newcommand{\prangle}{\moverlay{)\cr>}}
\def\rax{\plangle\x\prangle}
\newcommand{\ff}[1]{\kk\plangle #1 \prangle}
\def\N{\mathbb N}
\def\cA{ {\mathcal A} }
\def\cC{ {\mathcal C} }
\def\cH{ {\mathcal H} }
\def\cI{ {\mathcal I} }
\def\cM{{\mathcal M}}
\def\cN{ {\mathcal N} }
\def\cP{{\mathcal P}}
\def\cS{{\mathcal S} }
\def\cT{{\mathcal T}}
\def\cW{{\mathcal W}}
\def\cU{\mathcal U}
\def\cX{\mathcal X}
\def\cZ{ {\mathcal Z} }
\def\j1tog{ j= 1, \ldots, g }
\def\cI{{\mathcal I}}
\def\L0t{\ (L_0 \otimes I_n ) \ }
\def\PLINE1{\beta}
\newcommand{\C}{{\mathbb C}}
\newcommand{\R}{{\mathbb R}}
\DeclareMathOperator{\Hom}{Hom}
\DeclareMathOperator{\im}{im}
\newcommand{\der}{\mathrm d}
\newcommand{\bb}{\mathbf{b}}
\newcommand{\cc}{\mathbf{c}}
\newcommand{\uu}{\mathbf{u}}
\newcommand{\vv}{\mathbf{v}}
\newcommand{\fW}{\mathfrak{W}}
\newcommand{\fY}{\mathfrak{Y}}
\newcommand{\fy}{\mathfrak{y}}
\def\y{\ushort Y}
\newcommand{\gp}{\Langle \y \Rangle}
\newcommand{\agp}{\cA\gp}
\newcommand{\gs}{\Langle\!\!\Langle \y \Rangle\!\!\!\Rangle}
\newcommand{\ags}{\cA\gs}
\newtheorem{thm}{Theorem}[section]
\newtheorem{cor}[thm]{Corollary}
\newtheorem{lem}[thm]{Lemma}
\newtheorem{prop}[thm]{Proposition}
\theoremstyle{definition}
\theoremstyle{remark}
\newtheorem{rem}[thm]{Remark}
\numberwithin{equation}{section}
\newtheorem{exa}[thm]{Example}
\newtheorem{defn}[thm]{Definition}
\newtheorem{nota}[thm]{Notation}
\newcounter{Inc}
\begin{document}

\setcounter{tocdepth}{3}
\contentsmargin{2.55em} 
\dottedcontents{section}[3.8em]{}{2.3em}{.4pc} 
\dottedcontents{subsection}[6.1em]{}{3.2em}{.4pc}
\dottedcontents{subsubsection}[8.4em]{}{4.1em}{.4pc}

\makeatletter
\newcommand{\mycontentsbox}{%
{\centerline{NOT FOR PUBLICATION}
\addtolength{\parskip}{-4.3pt}
\tableofcontents}}
\def\enddoc@text{\ifx\@empty\@translators \else\@settranslators\fi
\ifx\@empty\addresses \else\@setaddresses\fi
\newpage\mycontentsbox\newpage\printindex}
\makeatother

\setcounter{page}{1}

\title[Null- and Positivstellens\"atze]{Null- and Positivstellens\"atze for\\[1mm] rationally resolvable ideals}

\author[Igor Klep]{Igor Klep${}^1$}
\address{Igor Klep, The University of Auckland, Department of Mathematics}
\email{igor.klep@auckland.ac.nz}
\thanks{${}^1$Supported by the Marsden Fund Council of the Royal Society of New Zealand. Partially supported by the Slovenian Research Agency grants P1-0222, L1-4292 and L1-6722. Part of this research was done while the author was on leave from the University of Maribor.}

\author[Victor Vinnikov]{Victor Vinnikov}
\address{Victor Vinnikov, 
Ben-Gurion University of the Negev, Department of Mathematics}
    \email{vinnikov@math.bgu.ac.il}

\author[Jurij Vol\v{c}i\v{c}]{Jurij Vol\v{c}i\v{c}${}^2$}
\address{Jurij Vol\v{c}i\v{c}, The University of Auckland, Department of Mathematics}
    \email{jurij.volcic@auckland.ac.nz}
\thanks{${}^2$Research supported by The University of Auckland Doctoral Scholarship.}

\subjclass[2010]{Primary 13J30, 16R50; Secondary 46L89, 14P05}
\date{\today}
\keywords{Nullstellensatz, free algebra, rational identity, division ring, skew field, spherical isometry, nc unitary group, Positivstellensatz, real algebraic geometry, free analysis}

\begin{abstract}
Hilbert's Nullstellensatz characterizes polynomials that vanish on the vanishing set of an ideal in $\C\cx$. In the free algebra $\C\ax$ the vanishing 
set of a two-sided ideal $\cI$ is defined in a dimension-free way using 
images in finite-dimensional representations of $\C\ax\!/\cI$.  
In this article Nullstellens\"atze for a simple but important 
class of ideals in the free algebra -- called tentatively {\em rationally resolvable} here -- are presented. An ideal is rationally resolvable if its defining relations can be eliminated by expressing some of the $\x$ variables  using noncommutative rational functions in the remaining variables. 
Whether such an ideal $\cI$ satisfies the Nullstellensatz
is intimately related to embeddability of $\C\ax\!/\cI$ into (free) skew fields.
These notions are also extended to free algebras with involution.
For instance, it is  proved that a polynomial vanishes on all tuples of spherical isometries iff it is a member of the two-sided ideal $\cI$ generated by $1-\sum_j X_j^\ss X_j$. This is then applied to free real algebraic geometry: polynomials positive semidefinite on  spherical isometries 
are sums of Hermitian squares modulo $\cI$. Similar results are
obtained for nc unitary groups.
\end{abstract}
\maketitle

\section{Introduction}

In algebraic geometry 
Hilbert's Nullstellensatz is a classical result 
characterizing polynomials vanishing on an algebraic set:

\begin{thm}[Hilbert's Nullstellensatz]\label{thm:hilbert}
Let $f,h_1,\ldots,h_s\in\C\cx$ and
$$
Z:=\{a\in\C^g\mid h_1(a)=\cdots=h_s(a)=0\}.
$$
If $f|_Z=0$, then
for some $r\in\N$, $f^r$ belongs to the ideal $(h_1,\ldots, h_s)$.
\end{thm}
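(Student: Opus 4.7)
The plan is to reduce the full Nullstellensatz to the weak Nullstellensatz (that a proper ideal of $\C[x_1,\ldots,x_n]$ has a zero in $\C^n$) via the Rabinowitsch trick, and then establish the weak version using Zariski's lemma together with the algebraic closedness of $\C$.

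First I would prove the weak form. Let $\cJ \subsetneq \C[x_1,\ldots,x_n]$ be a proper ideal and choose a maximal ideal $\cM \supset \cJ$. The quotient $K := \C[x_1,\ldots,x_n]/\cM$ is a field that is finitely generated as a $\C$-algebra. By Zariski's lemma (every finitely generated algebra over a field which is itself a field is a finite algebraic extension), $K$ is algebraic over $\C$; since $\C$ is algebraically closed, $K = \C$. Letting $a_i \in \C$ be the image of $x_i$ in $K$, the point $a = (a_1,\ldots,a_n)$ is a common zero of every element of $\cM$, and hence of $\cJ$. The proof of Zariski's lemma itself proceeds by induction on the number of generators, the hard case being to rule out transcendence: if $K = \C(t)[\alpha_1,\ldots,\alpha_{r-1}]$ were a finite extension of a purely transcendental $\C(t)$, one produces a contradiction by exhibiting an element of $\C(t)$ whose denominators cannot be bounded (for instance, a suitable irreducible $p(t)+1$-type argument); this transcendence-exclusion is the main technical obstacle.

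Next I would apply the Rabinowitsch trick to deduce the strong Nullstellensatz. Given $f,g_1,\ldots,g_s$ as in the statement, introduce a new variable $t$ and consider the ideal
\[
\cJ := (g_1,\ldots,g_s,\,1-tf) \;\subset\; \C[x_1,\ldots,x_n,t].
\]
A common zero $(a,b) \in \C^{n+1}$ of $\cJ$ would have $a \in Z$, hence $f(a) = 0$ by hypothesis, yet also $1 - b f(a) = 0$, a contradiction. So $\cJ$ has no zeros in $\C^{n+1}$, and by the weak Nullstellensatz $\cJ = \C[x_1,\ldots,x_n,t]$. In particular there exist $h_0, h_1, \ldots, h_s \in \C[x_1,\ldots,x_n,t]$ with
\[
1 \;=\; \sum_{j=1}^s h_j(x,t)\, g_j(x) \;+\; h_0(x,t)\,(1 - tf(x)).
\]

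Finally I would substitute $t = 1/f$. Working in the localization $\C[x_1,\ldots,x_n][f^{-1}]$, the relation above becomes $1 = \sum_j h_j(x,1/f)\, g_j(x)$, since the last term vanishes. Clearing denominators by multiplying through by a sufficiently high power $f^r$ (large enough to eliminate every occurrence of $f$ in the denominators of the $h_j(x,1/f)$) produces an identity $f^r = \sum_j \tilde h_j(x)\, g_j(x)$ with $\tilde h_j \in \C[x_1,\ldots,x_n]$, showing $f^r \in (g_1,\ldots,g_s)$ as required. This last step is routine once the Rabinowitsch trick has been set up; the genuine content of the theorem is concentrated in Zariski's lemma.
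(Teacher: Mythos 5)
This statement is classical Hilbert's Nullstellensatz, which the paper quotes in its introduction purely as background and does not prove, so there is no internal proof to compare against; your proposal therefore has to stand on its own, and it does. The route you take is the standard one: first the weak Nullstellensatz (a proper ideal $\cJ\subsetneq\C[x_1,\ldots,x_n]$ has a common zero), obtained by passing to a maximal ideal $\cM\supseteq\cJ$ and invoking Zariski's lemma together with the algebraic closedness of $\C$ to identify $\C[x_1,\ldots,x_n]/\cM$ with $\C$; then the Rabinowitsch trick with the auxiliary ideal $(g_1,\ldots,g_s,1-tf)$ in $\C[x_1,\ldots,x_n,t]$, followed by the substitution $t=1/f$ and clearing of denominators by a power $f^r$. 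All of these steps are correct. Two small points of hygiene: the final localization argument tacitly assumes $f\neq 0$ (if $f=0$ the conclusion is trivial, so you should dispose of that case in one line before inverting $f$), and your sketch of the transcendence-exclusion step inside Zariski's lemma is rather loose --- the clean way to phrase it is that if the field $K$ were transcendental over $\C$, then $\C(t)$ would be finitely generated as a $\C$-algebra, which fails because any finite generating set involves only finitely many irreducible denominators while $\C[t]$ has infinitely many irreducibles (over $\C$, the linear ones already suffice). Since Zariski's lemma is itself a standard citable result, this looseness is not a gap in the logical structure, only in the level of detail.
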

Due to its importance it has been generalized and extended in 
many different directions. For instance, 
there are  noncommutative versions due to Amitsur \cite{Ami57}, Bergman \cite{HM}, and
Helton et al. \cite{HMP2,CHMN}.
Here our main interest is in
\emph{free noncommutative Nullstellens\"atze} 
describing vanishing in free algebras. That is, given a two-sided ideal
$\cI$ in a free algebra $\kk\ax$, we consider polynomials $f$ vanishing under
all finite-dimensional representations of $\kk\ax\!/\cI$.
If each such $f$ is in $\cI$, then we say that $\cI$ has the Nullstellensatz property. Hence this is a noncommutative analog of a radical ideal in classical algebraic geometry.

In this paper we focus on an important
class of ideals we call rationally resolvable. These are ideals in which all generators can
be eliminated by solving for some of the variables in terms of noncommutative rational functions
in the remaining variables (see Definition \ref{def:RR} for a precise statement). Noncommutative rational functions are elements of a free skew field $\kk\rax$, i.e., the field with ``the least'' rational relations between its generators $X_j$. If for a rationally resolvable ideal $\cI$ the quotient ring $\kk\ax\!/\cI$ admits a
``nice'' embedding in a (free) skew field, then $\cI$ has the Nullstellensatz
property; the rigorous formulation is given in Theorem \ref{thm1} and Proposition \ref{prop2}. For instance, we obtain the following.

\begin{thm}\label{t:intro1}
Let $\cI\subset\kk\ax\!/\cI$ be a (formally) rationally resolvable ideal. If $\kk\ax\!/\cI$ is a Sylvester domain (e.g., a free ideal ring), then $\cI$ satisfies the Nullstellensatz property.
\end{thm}

See Theorem \ref{thm1}(a) and Proposition \ref{prop2}(a) for the proof. Subsection \ref{s:exa} gives examples illustrating the strength of our results in Section \ref{sec2}. In particular, we show the following.

\begin{cor}\label{c:intro1}
Let $f\in\fr{\x,\y}$.
\begin{enumerate}[\rm (a)]
\item If $f(\ua,\ub)=0$ for all $n\in\N$ and $(\ua,\ub)\in M_n(\kk)^{2g}$ such that $A_jB_j=I_n$ for $j=1,\dots,g$, then $f\in (1-X_1Y_1,1-Y_1X_1,\dots,1-X_gY_g,1-Y_gX_g)$.
\item If $f(\ua,\ub)=0$ for all $n\in\N$ and $(\ua,\ub)\in M_n(\kk)^{2g}$ with $A_1B_1+\dots+A_gB_g=I_n$, then $f\in (1-X_1Y_1-\cdots-X_gY_g)$.
\item If $f(\ua,\ub)=0$ for all $n\in\N$ and $(\ua,\ub)\in M_n(\kk)^{2g^2}$ such that $(A_{ij})_{ij}(B_{ij})_{ij}=I_{gn}$, then
$$f\in \left( \delta_{ij}-\sum_{k=1}^g X_{ik}Y_{kj},\delta_{ij}-\sum_{k=1}^g Y_{ik}X_{kj}\colon 1\le i,j\le g\right),$$
where $\delta_{ij}$ denotes Kronecker's delta.
\end{enumerate}
\end{cor}

See Corollaries \ref{c:exfrr1} and \ref{c:exfrr2} for the proofs. To obtain size bounds needed on the dimensions of the finite-dimensional representations of $\kk\ax\!/\cI$ for these Nullstellens\"atze, we employ systems
theory realizations for noncommutative rational functions; see \cite[Chapters 1 and 2]{BR} or \cite{BGM, HMV, KVV09}.
Our rational functions do not admit scalar regular points in general, so we present the
necessary modifications of the classical theory to handle matrix centers in Section \ref{sec3}. 
As a side product we obtain size bounds needed to test for rational identities, see Theorem \ref{t:RIbound}. This machinery is then applied to Nullstellens\"atze in Subsection \ref{s:bounds}: for a noncommutative polynomial $f$ and a rationally resolvable ideal $\cI$ we give 
explicit bounds on the dimension of the finite-dimensional representations
of $\kk\ax\!/\cI$ needed to test whether $f$ vanishes under all these representations (Theorem \ref{t:rrbounds}).

Section \ref{sec4} applies our results to $*$-ideals in 
free algebras with involution. We show that 
the $*$-ideals corresponding to unitaries and spherical isometries \cite{HMP1} satisfy the 
Nullstellensatz property (Theorem \ref{thm:null2}), and give
in Theorem \ref{thm:null3} a Nullstellensatz
for noncommutative unitary groups 
\cite{Bro,Wo}. These results can be summarized as follows.

\begin{cor}\label{c:intro2}
Let $f\in\C\axs$.
\begin{enumerate}[\rm (a)]
\item If $f(\ua,\ua^*)=0$ for all $n\in\N$ and $\ua\in M_n(\kk)^g$ where $A_j$ are unitary, then $f\in (1-X_1X_1^{\ss},1-X_1^{\ss}X_1,\dots, 1-X_gX_g^{\ss},1-X_g^{\ss}X_g)$.
\item If $f(\ua,\ua^*)=0$ for all $n\in\N$ and $\ua\in M_n(\kk)^g$ with $A_1A_1^{\ss}+\dots+A_gA_g^{\ss}=I_n$, then $f\in (1-X_1X_1^{\ss}-\cdots-X_gX_g^{\ss})$.
\item If $f(\ua,\ua^*)=0$ for all $n\in\N$ and $\ua\in M_n(\kk)^{g^2}$ such that $(A_{ij})_{ij}$ is a unitary matrix, then
$$f\in \left( \delta_{ij}-\sum_{k=1}^g X_{ik}X_{kj}^{\ss},\delta_{ij}-\sum_{k=1}^g X_{ik}^{\ss}X_{kj}\colon 1\le i,j\le g\right).$$
\end{enumerate}
\end{cor}

As before, these results are effective (i.e., we obtain concrete size bounds). To extend our involution-free statements to $*$-ideals in free
algebras with involution, we use (real) algebraic geometry, cf.~Subsection \ref{subs:real}. The paper concludes in Subsection \ref{subsec:pos} with Positivstellens\"atze for 
a few selected examples of rationally resolvable ideals. For example, the following result solves a problem from \cite{HMP1} on spherical isometries, i.e., tuples of matrices $(A_1,\dots,A_g)$ satisfying $A_1^*A_1+\cdots+A_g^*A_g=I$.

\begin{thm}\label{t:intro2}
If $f\in\C\axs$ of degree $d-1$ is nonnegative on all spherical isometries of size $(2g+1)^d$, then
$$f=\sum_jp_j^{\ss}p_j+q$$
for some $p_j\in\C\axs$ of degrees at most $d$ and $q\in (1-X_1^{\ss}X_1 \cdots-X_g^{\ss}X_g)$.
\end{thm}

Theorem \ref{t:intro2} is proved as Corollary \ref {c:pos2} in Subsection \ref{subsec:pos}.

\subsection*{Acknowledgments} 
The authors thank Warren Dicks and Pere Ara
for their help with Corollary \ref{c:exfrr2}.

\section{Rationally resolvable ideals and Nullstellens\"atze} \label{sec2}

In this section we introduce two notions of rationally resolvable ideals and identify important classes and examples of those satisfying a Nullstellensatz. Loosely speaking, an ideal is rationally resolvable if from each of its generators one variable can be expressed as a noncommutative rational function of other variables; see Definition \ref{def:RR} for a precise statement. The main result here is Theorem \ref{thm1}, and interesting examples are presented in Subsection \ref{s:exa}.

\subsection{Notation and terminology}

\subsubsection{Words and nc polynomials}\label{subsec:NCpoly}
Given a field $\kk$, and positive integers $n,g$,
  let $M_n(\kk)$ denote the $n \times n$ matrices
  with entries from $\kk$ and $M_n(\kk)^g$
  the set of $g$-tuples of such $n\times n$ matrices.
For simplicity, we shall always assume $\Char \kk=0$.

We write $\ax$ for the monoid freely
generated by $\x=\{X_1,\ldots, X_d\}$, i.e., $\ax$ consists of {\bf words} in the $g$
noncommuting
letters $X_{1},\ldots,X_g$
(including the empty word $\emptyset$ which plays the role of the identity $1$). With $|w|\in \N\cup\{0\}$ we denote the length of word $w\in\ax$. Let $\kk\ax$ denote the associative
$\kk$-algebra freely generated by $\x$, i.e., the elements of $\kk\ax$
are polynomials in the noncommuting variables $\x$ with coefficients
in $\kk$. Its elements are called {\bf (nc) polynomials}.
An element of the form $aw$ where $0\neq a\in \R$ and
$w\in\ax$ is called a {\bf monomial} and $a$ its
{\bf coefficient}. 
Sometimes we also use $Y_j$ to denote noncommuting variables.

\subsubsection{Polynomial evaluations}

If $p\in\kk\ax$ is an nc polynomial and $\ua\in M_n(\kk)^g$,
the evaluation $p(\ua)\in M_n(\kk)$ is defined by simply replacing $X_{i}$ by $A_{i}$. For example, if $p(x)=3 X_{1} X_{2}-X_1^2$, 
then
$$
p\left( \begin{bmatrix}
1&1 \\
-1 & 0
\end{bmatrix},
\begin{bmatrix}
1&0\\
2&-1
\end{bmatrix}
\right)=
3  \begin{bmatrix}
1&1 \\
-1 & 0
\end{bmatrix}
\,
\begin{bmatrix}
1&0\\
2&-1
\end{bmatrix}
- 
\begin{bmatrix}
1&1 \\
-1 & 0
\end{bmatrix}^2=
\begin{bmatrix}
9 & -4\\
-2 & 1
\end{bmatrix}.
$$
In other words, evaluations of nc polynomials at
$\ua\in M_n(\kk)^g$ are representations
$${\rm ev}_{\ua}:\kk\ax\to M_n(\kk), \quad
p\mapsto p(\ua).
$$

\subsubsection{Ideals}

Let $S=\{f_1,\ldots,f_s\}\subseteq \kk\ax$. The {\bf two-sided ideal} generated by $S$ is
\beq\label{eq:2i}
\cI(S):= \Big\{ \sum_{i=1}^n \sum_{j=1}^s g_{ij} f_j h_{ij} \mid n\in\N,\, g_{ij},h_{ij}\in\kk\ax\Big\}.
\eeq
It is the smallest subset of $\kk\ax$ containing $S$ and closed
under addition and multiplication (from left and right) with
elements from $\kk\ax$.

\subsubsection{Zero sets}

There are many noncommutative generalizations of Hilbert’s Nullstellensatz. 
The ones that concern us here replace point evaluations $\C\cx\to\C$ 
by some class of representations of the noncommutative algebra that we are dealing with. For the case of the free algebra $\kk\ax$
the most reasonable class of representations,
so far, seems to be the class of all finite dimensional representations. 
In other words, we replace evaluations of commutative polynomials on points in $\C^g$ by evaluations of nc polynomials on $g$-tuples of $n\times n$ 
matrices over $\kk$, for all $n\in\N$.

There is (at least) one other choice to be made in the noncommutative setting: we have to decide whether we are dealing with one-sided ideals or 
with two-sided ideals giving rise to different types of vanishing.
We refer the reader to \cite{BK} for a more extensive overview
of noncommutative Nullstellens\"atze.
Here we focus on (strong) vanishing and hence on two-sided ideals.

To a (two-sided) ideal $\cI\subset\kk\ax$ we associate its {\bf zero set}
\beq
Z(\cI):= \bigcup_{n\in\N} \{ \ua\in M_n(\kk)^g \mid
\forall g\in\cI: \,g(\ua)=0 \}.
\eeq
In contrast with the classical case (cf.~Theorem \ref{thm:hilbert}),
it may very well happen that $Z(\cI)=\emptyset$ for
a proper ideal $\cI\subsetneq\kk\ax$. For instance, take
$\cW=(X_1X_2-X_2X_1-1)$.

\subsection{The free skew field}

The universal skew field of fractions of the free algebra
$\kk\ax$ is called the {\bf free skew field}
and denoted by 
$\kk\rax$.
We call its elements (nc) rational functions.
Let us describe in a bit more detail in a language
suitable for our investigation how they are obtained.
We start with nc polynomials in $\kk\ax$, add
their formal inverses, allow addition and multiplication,
and then repeat this procedure.
This gives rise to nc rational expressions.
We emphasize that an expression includes the order in
which it is composed and no two distinct expressions are
identified, e.g., $(X_1)+(-X_1)$, $(-1)+(((X_1)^{-1})(X_1))$, and
$0$ are different nc rational expressions. 
 
 Evaluation of polynomials naturally extends to rational expressions. 
 If $r$ is a rational expression in $\x$ and $\ua\in M_n(\kk)^g$, 
 then $r(\ua)$ is defined - in the obvious way - as long as any inverses appearing actually exist.
 Generally, a 
nc rational expression $r$ can be evaluated on a
$g$-tuple $A$ of $n \times n$ matrices in its domain of
regularity, $\dom{r}$, which is defined as the set of all
$g$-tuples of square matrices of all sizes such that all the
inverses involved in the calculation of $r(\ua)$ exist. 
From here on we assume that all rational expressions under consideration have nonempty domains.

  Two rational
 expressions $r_1$ and $r_2$ are equivalent
 if $r_1(\ua)=r_2(\ua)$ at any $\ua\in\dom r_1\cap \dom r_2$.
 Then an equivalence class of rational expressions is a rational function.
The set $\kk\rax$ of all rational functions is a skew field. It has the following universal property: if $D$ is a skew field, then every homomorphism $\phi:\kk\ax\to D$ extends to a {\bf local homomorphism} from $\kk\rax$ to $D$, i.e., $\phi$ extends to a homomorphism $\varphi: K\to D$ for some subring $\kk\ax\subseteq K\subseteq \kk\rax$ such that for every $u\in K$, $\varphi(u)\neq0$ implies $u^{-1}\in K$. For a comprehensive study of (free) skew fields we refer to Cohn \cite{Coh1,Coh2}.

A rational expression is of {\bf height $h$} if the maximal number of nested inverses in it is $h$. The {\bf height} of a rational function is then defined as the minimum of heights of all the rational expressions representing it. We let $\ff{\x}_h\subset \ff{\x}$ denotes the subring of all rational functions whose height is at most $h$. If $r$ is a tuple of rational functions, then $h(r)$ denotes the maximum of heights of its components.

\def\cW{\mathcal W}

\subsection{Rationally resolvable ideals}

\begin{nota}\label{RR:notation}
We first introduce some additional notation. Fix a partition of the variables
\begin{equation}\label{decom}
\x=\x'\cup \x''=\{X_1',\dots,X_{g'}'\}\cup \{X_1'',\dots,X_{g''}''\},
\end{equation}
hereafter called the {\bf decomposition} of $\x$, and a tuple $r=(r_1,\dots,r_{g''})$ of rational functions $r_j\in\ff{\x'}$, to which we assign the following objects. Let $\dom r\subset \bigcup_n M_n(\kk)^{g'}$
be the common domain of regularity of the $r_j$ and
$$\Gamma(r)=\{(\ua,r(\ua))\mid \ua\in \dom r\}\subseteq \bigcup_n M_n(\kk)^g$$
the {\bf graph} of $r$. Let $R_r$ be the subring of $\ff{\x}$ generated by $\ff{\x'}_{h(r)}$ and $\fr{\x}$. In particular, $R_r$ contains $\x''$ and all $r_j$. Finally, let $\cI_r$ be the ideal in $R_r$ generated by the set $\{\x''_j-r_j(\x')\}_j$.
\end{nota}

\begin{defn}\label{def:RR}
Let $\cI$ be an ideal in $\fr{\x}$.

\begin{enumerate}[\rm(1)]
\item $\cI$ is {\bf formally rationally resolvable (frr)} with respect to a decomposition of $\x$ and a tuple $r$ as in Notation \ref{RR:notation} if 
\begin{enumerate}[\rm(a)]
\item $\cI\cap \fr{\x'}=0$; and 
\item $\cI$ generates $\cI_r$ as an ideal in $R_r$.
\end{enumerate}
\item $\cI$ is {\bf geometrically rationally resolvable (grr)} with respect to a decomposition of $\x$ and a tuple $r$ as in Notation \ref{RR:notation} if 
\begin{enumerate}[\rm(a)]
\item $\Gamma(r)\subseteq Z(\cI)$; and 
\item every polynomial in $\fr{\x}$, which vanishes on $\Gamma(r)$, also vanishes on $Z(\cI)$.
\end{enumerate}
\end{enumerate}
In both cases we call $r$ the {\bf rational resolvent} of $\cI$.
\end{defn}

From a geometric perspective the grr property is more
appealing, however frr is easier to handle algebraically.

\def\cT{\mathcal T}
\def\cS{\mathcal S}

\subsection{A Nullstellensatz} \label{s:main}

Theorem \ref{thm1} below is the basic version of our Nullstellensatz.

\begin{defn}
An ideal $\cI\subset\kk\ax$ is said to have the
{\bf Nullstellensatz property} if
for each $f\in\kk\ax$,
\beq
f\in\cI \quad\Leftrightarrow\quad f|_{Z(\cI)}=0.
\eeq
\end{defn}

In classical algebraic  geometry this coincides with being
a radical ideal by Hilbert's Nullstellensatz.

\begin{prop}\label{prop1}
Assume the setup is as in Notation {\rm\ref{RR:notation}}. Then:
\begin{enumerate}[\rm(a)]
\item There exists an isomorphism $\alpha:R_r\!/\cI_r \to \ff{\x'}_{h(r)}$;
\item A polynomial $p\in\fr{\x}$ vanishes on $\Gamma(r)$ if and only if $p\in \cI_r\cap \fr{\x}$;
\item Assume the ideal $\cI$ of $\fr{\x}$ is grr with rational resolvent $r$. Then $\cI$ satisfies the Nullstellensatz property if and only if $\cI=\cI_r\cap \fr{\x}$.
\end{enumerate}
\end{prop}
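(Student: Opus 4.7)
For part (a), the plan is to construct $\alpha$ as the substitution homomorphism $R_r \to \ff{\x'}_{h(r)}$ that fixes $\x'$ and sends $\x''_j \mapsto r_j(\x')$. Well-definedness rests on the point that $R_r$ is generated by $\fr{\x}$ together with $\ff{\x'}_{h(r)}$, so every inverse appearing in an element of $R_r$ is already expressible as an element of $\ff{\x'}_{h(r)}$ once the substitution is performed, and hence stays in the target. The generators $\x''_j - r_j(\x')$ of $\cI_r$ are obviously in $\ker\alpha$, so $\alpha$ descends to $\bar\alpha : R_r/\cI_r \to \ff{\x'}_{h(r)}$. To check $\bar\alpha$ is an isomorphism, I would produce an inverse coming from the composition $\ff{\x'}_{h(r)} \hookrightarrow R_r \twoheadrightarrow R_r/\cI_r$: surjectivity is clear because modulo $\cI_r$ every $\x''_j$ may be replaced by $r_j(\x')$, and the two maps are mutually inverse by construction.

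For part (b), the forward direction is immediate, since each generator $\x''_j - r_j(\x')$ vanishes at any $(\ua, r(\ua)) \in \Gamma(r)$ and this propagates to all of $\cI_r$. For the converse, given $p \in \fr{\x}$ vanishing on $\Gamma(r)$, I would apply $\alpha$ from part (a): the element $\alpha(p)$ is the rational function obtained by substituting $r_j(\x')$ for $\x''_j$, and it satisfies $\alpha(p)(\ua) = p(\ua, r(\ua)) = 0$ for all $\ua \in \dom r$. Since a rational function vanishing on a nonempty domain of regularity is zero, $\alpha(p) = 0$, and part (a) identifies $\ker\alpha$ with $\cI_r$, yielding $p \in \cI_r \cap \fr{\x}$.

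For part (c), both directions reduce to combining (b) with the two grr conditions. Assume first that $\cI = \cI_r \cap \fr{\x}$; if $f \in \fr{\x}$ vanishes on $Z(\cI)$, then in particular it vanishes on $\Gamma(r) \subseteq Z(\cI)$, so (b) gives $f \in \cI_r \cap \fr{\x} = \cI$, while $f \in \cI$ trivially vanishes on $Z(\cI)$. Conversely, assume the Nullstellensatz property. Any $f \in \cI$ vanishes on $Z(\cI)$ and in particular on $\Gamma(r)$, so by (b) it lies in $\cI_r \cap \fr{\x}$; this gives $\cI \subseteq \cI_r \cap \fr{\x}$. For the reverse inclusion, an element $f \in \cI_r \cap \fr{\x}$ vanishes on $\Gamma(r)$ by (b), hence on $Z(\cI)$ by the second grr condition, hence lies in $\cI$ by the Nullstellensatz hypothesis.

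The main obstacle is part (a): one must carefully track what "generated by $\ff{\x'}_{h(r)}$ and $\fr{\x}$" means at the level of rational expressions and verify that the substitution $\x''_j \mapsto r_j(\x')$ does not invert a newly non-invertible element. The precise role of the height truncation at $h(r)$ is exactly to guarantee that every inverse featured in $R_r$ is already present in $\ff{\x'}_{h(r)}$, so that $\alpha$ is well-defined and $\ker\alpha = \cI_r$ rather than something larger. Once this is set up correctly, parts (b) and (c) follow by essentially formal arguments from the definitions.
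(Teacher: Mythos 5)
Your route is the same as the paper's: define the substitution map $\alpha$ by $\x'\mapsto\x'$, $\x''\mapsto r(\x')$, identify $\cI_r$ with $\ker\alpha$, deduce (b) from the principle that a rational expression vanishing on its whole (nonempty) domain represents $0$ (this is exactly \cite[Theorem 16]{Ami}, which you invoke without naming), and obtain (c) formally from the two grr conditions. Parts (a) and (c) are fine, and the paper's own proof of (a) is no more detailed than yours; if you want to make well-definedness of $\alpha$ airtight, note that your remark about heights only explains why the image lands in $\ff{\x'}_{h(r)}$, not why the value is independent of the chosen presentation of an element of $R_r$ --- the clean justification is the observation the paper uses later (proof of Theorem \ref{thm1}(a)) that $R_r$ is the coproduct of $\fr{\x}$ and $\ff{\x'}_{h(r)}$ over $\fr{\x'}$, so the identity on $\ff{\x'}_{h(r)}$ and the substitution on $\fr{\x}$ induce $\alpha$.

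The one step that does not work as stated is the forward direction of (b): ``each generator vanishes on $\Gamma(r)$ and this propagates to all of $\cI_r$.'' Vanishing at a fixed point is not preserved under multiplication by elements of $R_r$, since those are rational functions that need not be defined at the point (already commutatively, $x$ vanishes at $0$ while $x^{-1}\cdot x$ does not). So writing $p=\sum_i a_i(\x''_{j_i}-r_{j_i})b_i$ with $a_i,b_i\in R_r$ only yields $p(\ua,r(\ua))=0$ at those $\ua\in\dom r$ where all the $a_i,b_i$ happen to be defined; it says nothing at the remaining points of $\Gamma(r)$, and there is no density argument available at a fixed matrix size (the coefficients may have empty domain at that size). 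The fix is to run this direction through the same mechanism you use for the converse, which is how the paper handles both directions at once: $p\in\cI_r=\ker\alpha$ gives $p(\x',r(\x'))=0$ in $\ff{\x'}$, and since the composite expression (a polynomial in expressions for the $r_j$) has domain containing $\dom r$ and represents the zero function, it vanishes at every $\ua\in\dom r$ by \cite[Theorem 16]{Ami}, i.e.\ $p$ vanishes on all of $\Gamma(r)$. With that substitution the argument is complete and coincides with the paper's.
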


\begin{proof} (i) The isomorphism is given by $\x'\mapsto \x'$ and $\x''\mapsto r(\x')$.
\\
(ii) Let $p\in\fr{\x}$ be arbitrary. By \cite[Theorem 16]{Ami}, $p(\ua,r(\ua))=0$ for all $\ua\in\dom r$ if and only if $q=p(\x',r(\x'))$ equals 0 in $\ff{\x'}$. Since the height of $q$ is at most $h(r)$, the former is equivalent to $p\in \cI_r$ by (i), so the claim follows.
\\
(iii) By the definition, $\cI$ is grr and has the Nullstellensatz property if and only if
$$\cI=\left\{p\in\fr{\x}\mid p|_{\Gamma(r)}=0\right\},$$
but the latter equals $\cI_r\cap \fr{\x}$ by (ii).
\end{proof}

Let $S$ be a subring of $R_1$ and $R_2$. 
The
{\bf coproduct of $R_1$ and $R_2$ over $S$} is a ring $R_1*_SR_2$ with homomorphisms $\vartheta_1:R_1\to R_1*_SR_2$ and $\vartheta_2:R_2\to R_1*_SR_2$ satisfying $\vartheta_1|_S=\vartheta_2|_S$ such that for every other ring $U$ with homomorphisms $\vartheta_1':R_1\to U$ and $\vartheta_2':R_2\to U$ satisfying $\vartheta_1'|_S=\vartheta_2'|_S$ there exists a unique homomorphism $\vartheta:R_1*_SR_2\to U$ such that $\vartheta_1'=\vartheta\circ\vartheta_1$ and $\vartheta_2'=\vartheta\circ\vartheta_2$. In other words, $R_1*_SR_2$ is the categorical pushout of $R_1$ and $R_2$ over $S$. We say that $R_1*_SR_2$ is {\bf faithful} if the canonical maps $R_1\to R_1*_SR_2$ and $R_2\to R_1*_SR_2$ are one-to-one.

\begin{thm}\label{thm1}
Let $\cI$ be an ideal of $\fr{\x}$.
\begin{enumerate}[\rm(a)]
\item If $\cI$ is frr with rational resolvent $r$ and the coproduct of $\fr{\x}\!/\cI$ and $\ff{\x'}_{h(r)}$ over $\fr{\x'}$ is faithful, then $\cI$ is grr and satisfies the Nullstellensatz property.
\item If $\cI$ is grr and satisfies the Nullstellensatz property, then $\fr{\x}\!/\cI$ embeds into a free skew field.
\end{enumerate}
\end{thm}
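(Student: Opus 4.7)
My plan is to reduce both directions to the identity $\cI=\cI_r\cap\fr{\x}$, after which Proposition~\ref{prop1} finishes the job.

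Part~(b) is a direct unwinding. If $\cI$ is grr with rational resolvent $r$ and has the Nullstellensatz property, Proposition~\ref{prop1}(iii) gives $\cI=\cI_r\cap\fr{\x}$, so the natural map $\fr{\x}\to R_r/\cI_r$ has kernel exactly $\cI$ and induces an injection $\fr{\x}/\cI\hookrightarrow R_r/\cI_r$. Composing with the isomorphism $\alpha$ of Proposition~\ref{prop1}(i) embeds $\fr{\x}/\cI$ into $\ff{\x'}_{h(r)}\subset\ff{\x'}$, which is a free skew field.

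For part~(a), start by noting that $\cI\subset\cI_r$ (since $\cI$ generates $\cI_r$ by frr), so every $p\in\cI$ vanishes on $\Gamma(r)$ exactly as the generators $\x''_j-r_j(\x')$ of $\cI_r$ do; thus $\Gamma(r)\subset Z(\cI)$, giving grr condition~(a). It then suffices to prove the reverse inclusion $\cI_r\cap\fr{\x}\subset\cI$: combined with Proposition~\ref{prop1}(ii), the polynomials vanishing on $\Gamma(r)$ will be exactly the elements of $\cI$ (which vanish on $Z(\cI)$ automatically), giving grr condition~(b), and Proposition~\ref{prop1}(iii) will then deliver the Nullstellensatz. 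This reverse inclusion is equivalent to injectivity of the map $\bar\sigma:\fr{\x}/\cI\to\ff{\x'}_{h(r)}$ induced by the composition $\fr{\x}\hookrightarrow R_r\twoheadrightarrow R_r/\cI_r\xrightarrow{\alpha}\ff{\x'}_{h(r)}$, since its kernel is $(\cI_r\cap\fr{\x})/\cI$. Let $C=\fr{\x}/\cI*_{\fr{\x'}}\ff{\x'}_{h(r)}$ denote the coproduct in question; by the universal property $\bar\sigma$ factors as $\fr{\x}/\cI\xrightarrow{\iota_A}C\xrightarrow{\tau}\ff{\x'}_{h(r)}$, with $\tau\circ\iota_B=\operatorname{id}$ for the canonical $\iota_B:\ff{\x'}_{h(r)}\to C$, and the faithfulness hypothesis supplies the injectivity of both $\iota_A$ and $\iota_B$. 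The key step will be to show $\iota_A(\x''_j)=\iota_B(r_j(\x'))$ in $C$: identifying $R_r$ with the coproduct $\fr{\x}*_{\fr{\x'}}\ff{\x'}_{h(r)}$ inside $\ff{\x}$, the universal property produces a ring map $R_r\to C$ extending $\iota_A$ and $\iota_B$; applying it to the frr identity $\x''_j-r_j(\x')=\sum_i a_ip_ib_i$ (with $p_i\in\cI$ and $a_i,b_i\in R_r$) sends the right-hand side to $0$, because each $p_i$ already vanishes under $\iota_A$. This forces $\iota_A(\fr{\x}/\cI)\subset\iota_B(\ff{\x'}_{h(r)})$ inside $C$, so $\tau$ is injective on $\iota_A(\fr{\x}/\cI)$, making $\bar\sigma=\tau\circ\iota_A$ injective as required.

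The main obstacle is precisely the identification of $R_r$ with the coproduct $\fr{\x}*_{\fr{\x'}}\ff{\x'}_{h(r)}$: one needs no unforeseen relations between $\fr{\x}$ and $\ff{\x'}_{h(r)}$ to appear in $\ff{\x}$ beyond the identification on $\fr{\x'}$. This freeness property of the free skew field is the technical heart of the argument and should be traceable to Cohn's theory of universal skew fields of fractions; once it is in place, the remaining manipulations are formal.
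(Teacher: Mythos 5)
Your proposal is correct and takes essentially the same route as the paper: both arguments reduce the theorem to the equality $\cI=\cI_r\cap\fr{\x}$ via Proposition \ref{prop1}, identify $R_r$ with the coproduct $\fr{\x}*_{\fr{\x'}}\ff{\x'}_{h(r)}$ to induce a homomorphism $R_r\to C$ that kills $\cI_r$, and then invoke faithfulness to get the needed injectivity (the paper through the factorization $\phi_2\circ\beta=\phi_1$, you through the retraction $\tau$ and the containment $\iota_A(\fr{\x}/\cI)\subseteq\iota_B(\ff{\x'}_{h(r)})$ -- a cosmetic variation), with part (b) handled identically via $\Phi$. The identification of $R_r$ with the coproduct, which you single out as the technical heart, is likewise simply asserted in the paper (``since $\x'$ and $\x''$ are disjoint''), so your reliance on it is no gap relative to the paper's own proof.
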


\begin{proof}(a) Assume there exists a commutative diagram
\begin{center}
\begin{tikzpicture}
\matrix(m)[matrix of math nodes,
row sep=2.6em, column sep=2.8em,
text height=1.5ex, text depth=0.25ex]
{\fr{\x'} & \fr{\x}\!/\cI \\
\ff{\x'}_{h(r)} & \cC \\};
\path[right hook->,font=\scriptsize,>=angle 90]
(m-1-1) edge (m-1-2)
edge (m-2-1);
\path[->,font=\scriptsize,>=angle 90]
(m-1-2) edge node[auto] {$\phi_1$} (m-2-2)
(m-2-1) edge node[below] {$\phi_0$} (m-2-2);
\end{tikzpicture}
\end{center}
where
$$\cC=\fr{\x}\!/\cI*_{\fr{\x'}}\ff{\x'}_{h(r)}$$
is the coproduct. Since $\cI\subseteq \cI_r$, the inclusion $\fr{\x}\subseteq R_r$ induces a homomorphism $\beta:\fr{\x}\!/\cI\to R_r\!/\cI_r$. Since the sets $\x'$ and $\x''$ are disjoint, $R_r$ is the coproduct of $\fr{\x}$ and $\ff{\x'}_{h(r)}$ over $\fr{\x'}$, thus $\phi_0$ and the composite of the quotient map $\fr{\x}\to\fr{\x}\!/\cI$ and $\phi_1$ induce a homomorphism $R_r\to \cC$ whose kernel includes $\cI$ and thus also $\cI_r$. Therefore there exists a homomorphism $\phi_2:R_r\!/\cI_r \to \cC$ such that the diagram
\begin{center}
\begin{tikzpicture}
\matrix(m)[matrix of math nodes,
row sep=2.6em, column sep=2.8em,
text height=1.5ex, text depth=0.25ex]
{\fr{\x}\!/\cI & \cC\\
R_r\!/\cI_r&  \\};
\path[->,font=\scriptsize,>=angle 90]
(m-1-1) edge node[auto] {$\phi_1$} (m-1-2)
edge node[left] {$\beta$} (m-2-1)
(m-2-1) edge node[below] {$\phi_2$} (m-1-2);
\end{tikzpicture}
\end{center}
commutes. Since $\phi_1$ is injective by assumption, $\beta$ is also injective, and so $\cI=\cI_r\cap\fr{\x}$. Therefore $\cI$ is grr and has the Nullstellensatz property by Proposition \ref{prop1}(iii).

(b) Let $r$ be the rational resolvent of $\cI$. Consider the mapping
\begin{equation}\label{Phi}
\Phi: \fr{\x}\to \ff{\x'},\quad p\mapsto p(\x',r(\x')).
\end{equation}
This homomorphism is a composition of a quotient map, $\beta$ from (A), $\alpha$ from Proposition \ref{prop1}(i) and an inclusion. But $\alpha$ is injective by the assumption and Proposition \ref{prop1}(iii), so we have $\ker \Phi= \cI$ and therefore obtain an embedding $\fr{\x}\!/\cI \hookrightarrow \ff{\x'}$.
\end{proof}

The second assumption in Theorem \ref{thm1}(a) might be somewhat difficult to check, so we 
present in Proposition \ref{prop2} two alternative sufficient conditions for the conclusion of Theorem \ref{thm1}(a) to hold that are easier to verify.

Proposition \ref{prop2} and its proof rely heavily upon the theory of skew fields as presented in \cite{Coh1}, so we recall a few definitions. A $n\times n$ matrix $A$ over a ring $R$ is {\bf full} if $A=BC$ for $n\times m$ matrix $B$ and $m\times n$ matrix $C$ implies $m\ge n$. A homomorphism $\varphi:R\to S$ is {\bf honest} if it maps full matrices over $R$ to full matrices $S$ when applied entry-wise. A ring $R$ is a {\bf free ideal ring}, or {\bf fir}, if every left (right) ideal in $R$ is a free left (right) $R$-module of unique rank; see \cite[Section 2.2]{Coh2} and \cite[Proposition 5.5.1]{Coh2}. Typical examples are free algebras and free group algebras. The most important property of a fir for our purpose is that it honestly embeds into its universal skew field of fractions. However, being fir is quite a strong condition; for example, $\kk[x,y]$ is not a fir since $(x,y)$ is not a free left $\kk[x,y]$-module. A bit wider class of rings with honest embeddings into universal skew fields of fractions are Sylvester domains \cite[Theorem 4.5.8]{Coh0}; see also \cite[Section 5.5]{Coh2} and \cite{DS} for an extensive study.

\begin{prop}\label{prop2}
Let $\cI$ be frr with rational resolvent $r$. Then the coproduct of $\fr{\x}\!/\cI$ and $\ff{\x'}_{h(r)}$ over $\fr{\x'}$ is faithful if
\begin{enumerate}[\rm(a)]
\item $\fr{\x}\!/\cI$ is a fir, or more generally, a Sylvester domain; or
\item $h(r)\le 1$ and $\fr{\x}\!/\cI$ embeds into a skew field.
\end{enumerate}
\end{prop}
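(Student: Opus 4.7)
The plan is to use the universal property of the coproduct $\cC:=\fr{\x}\!/\cI*_{\fr{\x'}}\ff{\x'}_{h(r)}$: to see that a canonical map $R\to \cC$ is injective it suffices to exhibit a ring $T$ and a compatible pair of homomorphisms $f_1:\fr{\x}\!/\cI\to T$ and $f_2:\ff{\x'}_{h(r)}\to T$ (agreeing on $\fr{\x'}$) whose restriction to $R$ is injective, since the induced map $\cC\to T$ will then factor that restriction.

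The injectivity of $\ff{\x'}_{h(r)}\to\cC$ can be handled uniformly for both (a) and (b). For this I would pair the homomorphism $\Phi:\fr{\x}\!/\cI\to \ff{\x'}$ from \eqref{Phi}, which is well-defined because $\cI\subseteq \cI_r$ by the frr hypothesis (cf.\ Proposition \ref{prop1}(i)), with the tautological inclusion $\ff{\x'}_{h(r)}\hookrightarrow\ff{\x'}$. These agree on $\fr{\x'}$, and the induced map $\cC\to\ff{\x'}$ restricts to the inclusion on $\ff{\x'}_{h(r)}$, hence is injective.

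For the injectivity of $\fr{\x}\!/\cI\to\cC$ under (a), I would let $D$ be the universal skew field of fractions of the Sylvester domain $\fr{\x}\!/\cI$, so that $\fr{\x}\!/\cI\hookrightarrow D$ is honest. Next I would verify that $\fr{\x'}\hookrightarrow\fr{\x}\!/\cI$ is itself honest: any non-full factorization $M=BC$ of a full square matrix $M$ over $\fr{\x'}$ inside $\fr{\x}\!/\cI$ would, under $\Phi$, produce such a factorization in the skew field $\ff{\x'}$, contradicting fullness of $M$ over $\fr{\x'}$. Composing these honest embeddings and applying the universal property of $\ff{\x'}$ as the universal skew field of fractions of the fir $\fr{\x'}$ yields an extension to an injection $\ff{\x'}\hookrightarrow D$; restricting to $\ff{\x'}_{h(r)}$ and pairing with $\fr{\x}\!/\cI\hookrightarrow D$ gives the desired compatible pair into $D$.

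For (b), I would consider the sub-skew-field $K_0\subseteq K$ generated by the image of $\fr{\x'}$, which is an epic $\fr{\x'}$-field. Cohn's classification of epic $\fr{\x'}$-fields as specializations of the universal skew field $\ff{\x'}$ (\cite[Ch.~7]{Coh1}) provides a local subring $L\subseteq \ff{\x'}$ containing $\fr{\x'}$ together with a local homomorphism $\sigma:L\to K_0$ restricting to the identity on $\fr{\x'}$. Every nonzero polynomial is a unit in the skew field $K_0$, hence by the local property also a unit in $L$, and this is precisely where $h(r)\le 1$ enters: it forces $\ff{\x'}_1$ to lie inside $L$. The composition $\ff{\x'}_1\hookrightarrow L\xrightarrow{\sigma}K_0\hookrightarrow K$ then pairs with $\fr{\x}\!/\cI\hookrightarrow K$ to induce a map $\cC\to K$ factoring the given embedding. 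The principal technical obstacle in both parts is invoking Cohn's machinery: in (a), that honest embeddings of firs extend to their universal skew fields of fractions, and in (b), the specialization theorem for epic $R$-fields.
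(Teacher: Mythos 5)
Your proof is correct, and in part (a) it is essentially the paper's argument: you show $\fr{\x'}\hookrightarrow\fr{\x}\!/\cI$ is honest by pushing a putative non-full factorization through $\Phi$ from \eqref{Phi} into $\ff{\x'}$ (to close the contradiction you should invoke, as the paper does, that a full matrix over $\fr{\x'}$ stays full, indeed invertible, over $\ff{\x'}$ --- honesty of $\fr{\x'}\subset\ff{\x'}$), and then Cohn's extension theorem gives $\ff{\x'}\hookrightarrow D$ compatibly with $\fr{\x}\!/\cI\hookrightarrow D$. Where you genuinely diverge is the architecture, and this matters in part (b). The paper uses the symmetric criterion that both factors embed into one common ring, so in (b) it must assert that $\ff{\x'}_1$ embeds into the skew field via the specialization; that injectivity is stated without proof and does not follow from injectivity on $\fr{\x'}$ alone: a specialization attached to an embedding of the free algebra into a skew field can kill nonzero height-one elements (for instance $XYX^{-2}-(YX^{-1})^2$ is nonzero in $\ff{X,Y}$ but vanishes at the pair $(s,ts)$ in the skew Laurent series field $F((s;\sigma))$, $F=\bigcup_n\kk(t^{1/2^n})$, $\sigma(t)=t^2$, even though $s$ and $ts$ generate a free algebra). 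You instead check the two canonical maps into $\cC$ separately with different targets: the pair $(\Phi,\mathrm{incl})$ into $\ff{\x'}$ settles injectivity of $\ff{\x'}_{h(r)}\to\cC$ once and for all, so in (b) you need the specialization $\sigma$ only to exist and to be defined on $\ff{\x'}_1$ (which locality plus $h(r)\le 1$ gives, using $\cI\cap\fr{\x'}=0$ so that nonzero polynomials have nonzero image and hence are units in the local subring $L$) --- not to be injective. Your route therefore yields a complete proof of (b) free of the paper's unjustified embedding claim; the paper's route, when that embedding does hold, gives the marginally stronger conclusion that both factors sit inside a single skew field.
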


\begin{proof}
Observe that the coproduct of $R_1$ and $R_2$ over $S$ is faithful if $R_1$ and $R_2$ embed into some ring in a way such that the embeddings agree on $S$.

(a) We claim that the inclusion $\fr{\x'}\hookrightarrow \fr{\x}\!/\cI$ is honest. Indeed, assume that a $n\times n$ matrix $A$ over $\fr{\x'}$ has a factorization $A=B_0C_0$, where $B_0$ is $n\times m$ matrix and $C_0$ is $m\times n$ matrix over $\fr{\x}\!/\cI$. Therefore there exist matrices $B_1$ and $C_1$ over $\fr{\x}$ of the same sizes as $B_0$ and $C_0$, respectively, such that $A-B_1C_1$ is a matrix over $\cI$. If we apply the homomorphism $\Phi$ from \eqref{Phi} in the proof of Theorem \ref{thm1}(b) on $B_1$ and $C_1$ entry-wise, we get $A=B_2C_2$ for matrices $B_2$ and $C_2$ over $\ff{\x'}$ of appropriate sizes. Since $\fr{\x'}$ is a Sylvester domain with the universal skew field of fractions $\ff{\x'}$, the inclusion $\fr{\x'}\subset \ff{\x'}$ is honest, so there exist matrices $B_3$ and $C_3$ over $\fr{\x'}$ of sizes $n\times m$ and $m\times n$, respectively, such that $A=B_3C_3$. Therefore the claim holds. Thus $\ff{\x'}$ embeds into the universal skew field of fractions $D$ of $\fr{\x}\!/\cI$ by \cite[Theorem 4.5.10]{Coh1}, so
\begin{center}
\begin{tikzpicture}
\matrix(m)[matrix of math nodes,
row sep=2.6em, column sep=2.8em,
text height=1.5ex, text depth=0.25ex]
{\fr{\x'} & \fr{\x}\!/\cI \\
\ff{\x'} & D \\};
\path[right hook->,font=\scriptsize,>=angle 90]
(m-1-1) edge (m-1-2)
edge (m-2-1)
(m-1-2) edge (m-2-2)
(m-2-1) edge (m-2-2);
\end{tikzpicture}
\end{center}
is a desired diagram.

(b) If $\fr{\x}\!/\cI$ embeds into a skew field $D$, then $\fr{\x'}$ also embeds into $D$. Since $\ff{\x'}$ is the universal skew field of fractions of $\fr{\x'}$, there exists a local homomorphism from $\ff{\x'}$ to a skew subfield of $D$ generated by the image of $\fr{\x'}$. Therefore $\ff{\x'}_1$ also embeds into $D$ and the diagram
\begin{center}
\begin{tikzpicture}
\matrix(m)[matrix of math nodes,
row sep=2.6em, column sep=2.8em,
text height=1.5ex, text depth=0.25ex]
{\fr{\x'} & \fr{\x}\!/\cI \\
\ff{\x'}_1 & D \\};
\path[right hook->,font=\scriptsize,>=angle 90]
(m-1-1) edge (m-1-2)
edge (m-2-1)
(m-1-2) edge (m-2-2)
(m-2-1) edge (m-2-2);
\end{tikzpicture}
\end{center}
commutes.
\end{proof}

\subsection{Examples and counterexamples}\label{s:exa}
In this subsection we present examples illustrating the strength of our results. We start with a simple family of ideals satisfying the assumptions of Theorem \ref{thm1}(a). Thus these ideals are all rationally resolvable in both senses and satisfy the Nullstellensatz property.

\begin{exa}
For some $i_1,\dots,i_m,j_1,\dots, j_n\in\N$ with $i_1\neq j_1$ and $i_m\neq j_n$ consider
$$\cI=(X_{i_1}\cdots X_{i_m}-X_{j_1}\cdots X_{j_n})\subset\fr{\x}.$$
As a side product of a result by Lewin and Lewin in \cite[Theorem 3]{LL} on embedding of the group algebra of a torsion-free one-relator group into a skew field, $\fr{\x}\!/\cI$ embeds into a skew field by \cite[Corollary 6.3]{LL}. If at least one symbol appears exactly once in the given relation, $\cI$ is also frr, so $\cI$ satisfies the Nullstellensatz property by Proposition \ref{prop2}(b) and Theorem \ref{thm1}(a).

For example, if $\cI=(X_1X_2X_3-X_3X_1^2)$, then we choose the decomposition $\x=\{X_1,X_3\}\cup\{X_2\}$ and the resolvent $r=X_1^{-1}X_3X_1^2X_3^{-1}$. Note that $R_r$ is then the ring generated by $\kk\ax$ and $f^{-1}$ for nonzero $f\in\kk\!\Langle X_1,X_3\Rangle$, and $\cI$ generates the ideal $\cI_r=(X_2-X_1^{-1}X_3X_1^2X_3^{-1})$ in $R_r$.
\end{exa}

We continue with two families of ideals satisfying the Nullstellensatz property that will be revisited in Section \ref{sec4} where we discuss noncommutative unitary groups and spherical isometries.

\subsubsection{Towards nc unitary groups}

For $1\le \ell\le n$ let $X_\ell=(X_{ij}^{(\ell)})_{ij}$ and $Y_\ell=(Y_{ij}^{(\ell)})_{ij}$ be $g_\ell\times g_\ell$ matrices of free noncommuting symbols. Moreover, let $\Rel_\ell$ be the set of entries of the matrices $X_\ell Y_\ell-I_{g_\ell}$ and $Y_\ell X_\ell-I_{g_\ell}$. Consider the ideal
$$\cU'=(\Rel_1\cup\cdots\cup \Rel_n)$$
in the ring
$$\fr{\x,\y}=\fr{X_{ij}^{(\ell)},Y_{ij}^{(\ell)}\colon 1\le \ell\le n,\ 1\le i,j\le g_\ell}.$$

\begin{cor}\label{c:exfrr1}
	The ideal $\cU'$ satisfies the Nullstellensatz property.
\end{cor}

\begin{proof}
The quotient $\fr{\x,\y}\!/\cU'$ is a fir by \cite[Theorem 6.1]{Ber1} and \cite[Theorem 5.3.9]{Coh1} since
$$\fr{\x,\y}\!/\cU' = \fr{\x^{(1)},Y^{(1)}}\!/(\Rel_1)*_k\cdots *_k
\fr{\x^{(n)},Y^{(n)}}\!/(\Rel_n).$$
Thus it is a Sylvester domain by \cite[Proposition 4.5.5]{Coh1}. Also, $\cU'$ is frr, since  $Y_\ell$'s entries can be expressed as rational functions of $X_\ell$'s entries from the defining equations. This can be done by recursive application of the blockwise inversion formula (see e.g. \cite[Subsection 0.7.3]{HJ}),
$$\begin{bmatrix}A&B \\ C&D\end{bmatrix}^{-1}=
\begin{bmatrix}
(A-BD^{-1}C)^{-1} & A^{-1}B(CA^{-1}B-D)^{-1} \\
(CA^{-1}B-D)^{-1}CA^{-1} & (D-CA^{-1}B)^{-1}
\end{bmatrix}$$
on matrices $X_\ell$. Therefore $\cU'$ satisfies the condition of Proposition \ref{prop2}(a).
\end{proof}

\subsubsection{Towards spherical isometries}

Let $\cS'=(X_1Y_1+\cdots+X_gY_g-1)\subset \fr{\x,\y}$ for variables $\x=\{X_1,\dots,X_g\}$ and $\y=\{Y_1,\dots,Y_g\}$.

\begin{cor}\label{c:exfrr2}
	The ideal $\cS'$ satisfies the Nullstellensatz property.
\end{cor}

\begin{proof}
Since $\cS'$ is frr with rational resolvent $r=X_1^{-1}(1-X_2Y_2-\cdots-X_gY_g)$, by Proposition \ref{prop2}(b) it suffices to prove that $\fr{\x,\y}\!/\cS'$ embeds into a skew field. Let
$$R := \kk \Langle X_{ij}, Y_{ij} \mid 1\le i,j \le g,\,(X_{ij})(Y_{ij})=(Y_{ij})(X_{ij})=I_g\Rangle.$$
As seen in Corollary \ref{c:exfrr1} this ring is a fir and therefore embeddable into a skew field.
There is a normal form in $R$ consisting of nc polynomials without terms containing 
$X_{i1}Y_{1j}$ or $Y_{i1}X_{1j}$ (cf.~\cite{Coh0}).
It is easy to see that there is a normal form in the ring 
$\kk\axy\!/\cS'$  consisting of polynomials without $X_1 Y_1$. Now map 
\[
\kk\axy\!/\cS'  \to R,\quad X_i \mapsto X_{2i}, 
\quad  Y_i \mapsto Y_{i2}.
\]
This is an embedding at the normal form level, 
hence $\kk\axy\!/\cS'$ embeds in $R$.
\end{proof}

Note that $\fr{\x,\y}\!/\cS'$ is a hereditary ring, $(g-1)$-fir but not $g$-fir by \cite[Theorem 6.1]{Ber1}, hence it is not a Sylvester domain by \cite[Proposition 11]{DS}.

\subsubsection{Counterexamples}

Lastly, we list a few examples which show that the assumptions of Theorem \ref{thm1} cannot be weakened.

\begin{exa}\label{counterex} \hfill \\[-4ex]
\ben[\rm(1)]

\item Even if an ideal $\cI$ is rationally resolvable in both senses, this does not imply the Nullstellensatz property or that $\fr{\x}\!/\cI$ embeds into a skew field; an easy counterexample is $(1-XY)\subset \fr{X,Y}$.

\item The Weyl algebra is an Ore domain and therefore embeddable into a skew field, but its defining ideal  $(XY-YX-1)\subset \fr{X,Y}$ does not satisfy the Nullstellensatz property.

\item The ring $\fr{X,Y}\!/(XY)$ has zero divisors and therefore cannot embed into a skew field, but $(XY)$ has the Nullstellensatz property. This follows from the fact that for every $m,n\in \N_0$, not both zero, there exist matrices $A$ and $B$ such that
$$B^m A^n\neq0,\quad AB=0,\quad B^{m+1}=0=A^{n+1}.$$
Concretely, one can choose the $(m+n+1)\times (m+n+1)$ matrices
$$A=\sum_{i=1}^n E_{i,i+1},\quad B=\sum_{i=n+2}^{n+m}E_{i,i+1}+E_{m+n+1,1}$$
where $E_{i,j}$ are the standard matrix units.

\item The property frr does not imply grr. The ideal $\cI=(X-XYX)\subset\fr{X,Y}$ is frr with rational resolvent $r=X^{-1}$. Assume that $\cI$ is grr. Then obviously exactly one of the symbols $X$ and $Y$ belongs to the first set of the decomposition \eqref{decom}, for example $X$ (the other case is treated similarly). Thus $(X-XYX)$ is grr with rational resolvent $s=p(X)q(X)^{-1}$, where $p$ and $q$ are coprime univariate polynomials. Consider the polynomial $q(X)Y-p(X)$. It obviously equals 0 on $\Gamma(s)$, but it does not vanish in $(0,0)$  if $p(0)\neq 0$ or in $(1,1)$ if $p(0)=0$. This is a contradiction since  these two points belong to the zero set of $(X-XYX)$.

\item The Nullstellensatz property together with grr does not imply frr. Consider the ideal
$$(ZW-WZ,Z^3-W^2)\subset \fr{X,Y,Z,W}$$
and rational functions $r_Z=(XY^{-1}X)^2$ and $r_W=(XY^{-1}X)^3$. Then
$$\cI_r\cap \fr{X,Y,Z,W}=(ZW-WZ,Z^3-W^2).$$
The inclusion $\supseteq$ is trivial and the inclusion $\subseteq$ 
follows from the fact that 
the set of words of the form
$$m_0Z^{e_1}W^{f_1}m_1\cdots Z^{e_l}W^{f_l}m_l,$$
where $f_i\in\{0,1\}$ and $m_i$ are words in $X$ and $Y$, is linearly independent over $\kk$ in $R_r\!/\cI_r$. Thus the given ideal is grr and satisfies the Nullstellensatz property by Proposition \ref{prop1}(c). However, it is not frr. Indeed, otherwise at least one of $Z,W$ is in the second set of the decomposition \eqref{decom}, say $W$, and then
\begin{equation}\label{eq1}
W-s=\sum_i a_i(ZW-WZ)b_i+\sum_i c_i(Z^3-W^2)d_i
\end{equation}
holds for some $a_i,b_i,c_i,d_i\in F\Langle W\Rangle$ and $s\in F$, where $F=\ff{X,Y,Z}$. Since $F[W]$ is a homomorphic image of $F\Langle W\Rangle$. Then the equation \eqref{eq1}  implies $W-s$ is in the ideal of $F[W]$ generated by a polynomial of degree 2, namely the image of $W^2-Z^3$, which is a contradiction.
\een
\end{exa}

\section{\except{toc}{Realization theory for noncommutative rational functions and bounds for the Nullstellensatz} \for{toc}{Realization theory and bounds for the Nullstellensatz}} \label{sec3}

In this section we give size bounds needed to check the vanishing property $f|_{Z(\cI)}=0$ for a grr ideal $\cI$. More precisely, we present a concrete bound $N$, depending on $f$ and the rational resolvent of $\cI$, such that $f|_{Z(\cI)}=0$ is equivalent to
$f|_{Z(\cI)\cap M_N(\kk)^g}=0$; see Theorem \ref{t:rrbounds}.

Let $\cI$ be grr with rational resolvent $r$. Recall that
$$f|_{Z(\cI)}=0 \iff f|_{\Gamma(r)}=0,$$
which is furthermore equivalent to $f(\x',r(\x'))$ being a rational identity. Therefore we are interested in providing bounds for testing whether a rational expression is a rational identity. This can be achieved through realization theory for rational expressions (see \cite{BR, KVV09, BGM, HMV} for realization theory of rational expressions defined in a scalar point and \cite{CR} for realizations over infinite-dimensional skew fields). Here we present its aspects that are relevant for the task at hand; a more thorough discussion of this subject will be given elsewhere \cite{Vol}. As we shall need power series expansions about non-scalar points, we start by introducing generalized polynomials (which will form homogeneous components of these power series) in Subsection \ref{s:gp}. Subsection \ref{s:gs} then presents the general realization theory, and its application to the bounds for the Nullstellensatz property are in Subsection \ref{s:bounds}.
As an auxiliary result we present size bounds 
needed to test whether a rational expression is a rational identity,
see Theorem \ref{t:RIbound}.

Throughout this section let $\y=\{Y_1,\dots,Y_g\}$ be a set of freely noncommuting letters and $\cA=M_m(\kk)$.

\subsection{Generalized polynomials}\label{s:gp}

The elements of the free product
$$\agp=M_m(\kk) *_{\kk}\kk\gp$$
are called \textbf{generalized (nc) polynomials} over $M_m(\kk)$. They can be evaluated in 
$M_{ms}(\kk)$ via embedding $a\mapsto a\otimes I_s$ of $M_m(\kk)$ into $M_{ms}(\kk)$ and we have
\begin{equation}\label{e:mrfunctor1}
\Hom_{M_m}(M_m(\kk)\gp,M_{ms}(\kk))\cong \Hom(\fW_m(\kk\gp),M_s(\kk)),
\end{equation}
where $\fW_m$ denotes \textit{the matrix reduction functor} as in \cite[Section 1.7]{Coh1}. For a free algebra we have
$$\fW_m(\kk\gp)=\kk\langle \fY\rangle,$$
where
$$\fY=\{\fy_{ij}^{(k)}\colon 1\le i,j\le m,\ 1\le k\le g\}$$
is a set of independent freely noncommuting letters. The isomorphism \eqref{e:mrfunctor1} follows from the isomorphism
\begin{equation}\label{e:mrfunctor2}
M_m(\kk)\gp\to M_m(\kk\langle \fY\rangle),\quad 
E_{i\imath}Y_k E_{\jmath j}\mapsto \fy_{\imath\jmath}^{(k)}\cdot E_{ij},
\end{equation}
where $E_{ij}$ are the standard matrix units in $M_m(\kk)$.

\begin{prop}\label{p:GPI}
If $f\in \agp$ is of degree $h$ and vanishes on matrices of size 
$m\lceil\tfrac{h+1}{2}\rceil$, then $f=0$.
\end{prop}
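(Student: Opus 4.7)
The plan is to reduce the statement via the isomorphism \eqref{e:mrfunctor2} to a classical fact about polynomial identities of matrix algebras. Under \eqref{e:mrfunctor2}, the generalized polynomial $f\in\agp$ corresponds to a matrix $F\in M_m(\kk\langle\fY\rangle)$, and since $\deg f\le h$ in the $Y_k$'s, each entry $F_{ij}$ is an ordinary polynomial of degree at most $h$ in the $m^2g$ free variables $\fY=\{\fy_{\imath\jmath}^{(k)}\}$.

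Next, I would check that the isomorphism is compatible with evaluation on $M_{ms}(\kk)$ in the following sense: for $\ua\in M_{ms}(\kk)^g$, write each $A_k$ as an $m\times m$ block matrix with $s\times s$ blocks $A_k^{(\imath\jmath)}\in M_s(\kk)$. Then $f(\ua)$ is the matrix in $M_m(M_s(\kk))=M_{ms}(\kk)$ whose $(i,j)$-block is $F_{ij}(\uA)$, where $\uA=\bigl(A_k^{(\imath\jmath)}\bigr)_{\imath,\jmath,k}\in M_s(\kk)^{m^2g}$. This follows by checking on the generators $E_{i\imath}Y_kE_{\jmath j}$, which are sent by \eqref{e:mrfunctor2} to $\fy_{\imath\jmath}^{(k)}E_{ij}$, and unwinding the free-product relations; equivalently, this is the content of \eqref{e:mrfunctor1}.

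Now set $s=\lceil(h+1)/2\rceil$, so by hypothesis $f$ vanishes identically on $M_{ms}(\kk)^g$. By the compatibility above, each entry $F_{ij}\in\kk\langle\fY\rangle$ vanishes identically on $M_s(\kk)^{m^2g}$, i.e., each $F_{ij}$ is a polynomial identity of $M_s(\kk)$. By the Amitsur--Levitzki theorem, the minimal degree of a nonzero polynomial identity of $M_s(\kk)$ is $2s\ge h+1 > h$. Since $\deg F_{ij}\le h$, this forces $F_{ij}=0$ for all $i,j$, hence $F=0$ and $f=0$.

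The argument is essentially a bookkeeping reduction, so the only substantive step is invoking Amitsur--Levitzki; the mild technical point to get right is the indexing in \eqref{e:mrfunctor2}, making sure that the ``outer'' indices $(i,j)$ on $E_{i\imath}(\cdot)E_{\jmath j}$ become the position of the block in the image matrix while the ``inner'' indices $(\imath,\jmath)$ become the superscript of $\fy^{(k)}_{\imath\jmath}$, so that evaluation on block matrices genuinely translates into evaluation on the tuple of $m^2g$ blocks. No further obstacles are expected.
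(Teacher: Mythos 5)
Your proof is correct and is essentially the paper's own argument: the paper likewise passes through the matrix reduction isomorphism \eqref{e:mrfunctor2} (noting it preserves degree) and \eqref{e:mrfunctor1}, then invokes the classical fact that $M_s(\kk)$ has no nonzero polynomial identities of degree less than $2s$. The only cosmetic difference is the citation: the fact you need is the ``no identities below degree $2s$'' half (the paper cites \cite[Lemma 1.4.3]{Row}), not the Amitsur--Levitzki theorem proper, which concerns the standard identity of degree exactly $2s$.
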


\begin{proof}
Since the isomorphism \eqref{e:mrfunctor2} preserves polynomial degrees, the proposition follows from \eqref{e:mrfunctor1} and a well-known fact that there are no nonzero polynomial identities on $M_s(\kk)$ of degree less than $2s$ 
(see e.g. \cite[Lemma 1.4.3]{Row}).
\end{proof}

For $Y_i\in\y$ let $\cA^{Y_i}$ denote the $\cA$-bimodule in $\agp$ generated by $Y_i$, i.e. $\cA^{Y_i}=\sum \cA Y_i \cA$, and more generally, $\cA^w= \cA^{w_1}\cdots \cA^{w_{|w|}}$ for $w\in \gp$ (here we set $\cA^1=\cA$). Note that $\cA^w$ and $\cA^{\otimes |w|}$ are isomorphic as $\cA$-bimodules; in particular, as a $\cA$-bimodule, $\cA^w$ does not depend on the letters in $w$, but just on the length of $w$.

\begin{lem}\label{l:difvar}
Let $w=Y_1\cdots Y_k$. If $f\in\cA^w$ vanishes on $\cA$, then $f=0$.
\end{lem}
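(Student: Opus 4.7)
The plan is to decompose $f$ in an explicit basis of $\cA^w$ and then read off each coefficient by evaluating at a carefully chosen tuple of matrix units. First I would fix the standard matrix units $\{E_{ij}\}$ of $\cA = M_m(\kk)$ and observe that $\cA^w = \cA^{Y_1}\cdots\cA^{Y_k}$ admits, as a $\kk$-vector space, the basis
\[
\bigl\{E_{i_0 j_0}\, Y_1\, E_{i_1 j_1}\, Y_2 \cdots Y_k\, E_{i_k j_k}\bigr\}_{\mathbf{i},\mathbf{j}},
\]
compatibly with the tensor-product description of $\cA^w$ noted before the lemma. Thus $f$ has a unique expansion $f = \sum_{\mathbf{i},\mathbf{j}} c_{\mathbf{i},\mathbf{j}}\, E_{i_0 j_0} Y_1 E_{i_1 j_1} \cdots Y_k E_{i_k j_k}$, and the task becomes to show that every scalar $c_{\mathbf{i},\mathbf{j}}$ vanishes.

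Next I would evaluate $f$ at the tuple $(Y_1,\dots,Y_k)=(E_{p_1 q_1},\dots,E_{p_k q_k})$. Using the multiplication rule $E_{ab}E_{cd}=\delta_{bc}E_{ad}$ iteratively, the basis monomial indexed by $(\mathbf{i},\mathbf{j})$ contributes a nonzero term to the evaluation exactly when $j_{\ell-1}=p_\ell$ and $i_\ell=q_\ell$ for every $\ell=1,\dots,k$, and that term equals $c_{\mathbf{i},\mathbf{j}}\, E_{i_0 j_k}$. Collecting,
\[
f(E_{p_1 q_1},\dots,E_{p_k q_k}) \;=\; \sum_{i_0,\,j_k} c_{(i_0,\,q_1,\dots,q_k),\,(p_1,\dots,p_k,\,j_k)}\, E_{i_0 j_k}.
\]
By hypothesis the left-hand side vanishes for every choice of $(p_\ell,q_\ell)$; linear independence of the matrix units $E_{i_0 j_k}$ then forces the scalar coefficient for every $(i_0,j_k)$ to be zero. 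Since the data $(i_0,q_1,\dots,q_k,p_1,\dots,p_k,j_k)$ is in bijection with the index tuples $(\mathbf{i},\mathbf{j})$, this kills every $c_{\mathbf{i},\mathbf{j}}$, and hence $f=0$.

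I expect no real obstacle: the argument is pure bookkeeping once the matrix-unit basis is fixed, the crucial point being simply that evaluation at matrix-unit tuples acts as a coordinate projection on this basis (and here the variables $Y_1,\dots,Y_k$ being pairwise distinct is essential, as it prevents any collapse between different $\cA^{Y_\ell}$ factors). An alternative, more in the spirit of Proposition \ref{p:GPI}, would be to pass via the isomorphism \eqref{e:mrfunctor2} to $\fW_m(f)\in M_m(\kk\langle\fY\rangle)$, note that $\fW_m(f)$ is multilinear of multidegree $(1,\dots,1)$ in the variables $\fy^{(\ell)}_{ij}$, and then invoke that a nonzero multilinear commutative polynomial cannot vanish identically on $\kk$ (this is the $s=1$ case). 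Both routes are short; the direct matrix-unit calculation is more self-contained.
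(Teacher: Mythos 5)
Your proof is correct, but it follows a genuinely different route from the paper's. You expand $f$ in the matrix-unit basis $E_{i_0j_0}Y_1E_{i_1j_1}\cdots Y_kE_{i_kj_k}$ of $\cA^w$ (this is indeed a basis, as one checks directly in the free product or via the isomorphism \eqref{e:mrfunctor2}) and observe that evaluation at tuples of matrix units $(E_{p_1q_1},\dots,E_{p_kq_k})$ acts as a coordinate projection on this basis, so vanishing on $\cA$ kills all coefficients in one stroke; your index bookkeeping is accurate. The paper instead proceeds by induction on $k$: the base case identifies $\cA^{Y_1}$ with $\cA\otimes_\kk\cA^{\mathrm{op}}$ and invokes simplicity of that algebra to conclude that the left-right multiplication representation $a\otimes b\mapsto L_aR_b$ is faithful, and the inductive step writes $f=\sum_i f_iY_{k+1}a_i$ with the $a_i$ linearly independent over $\kk$ and specializes the first $k$ variables. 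What your computation buys is a fully elementary, non-inductive and self-contained argument that makes the relevant nondegeneracy explicit; what the paper's buys is a basis-free argument whose key step (faithfulness via simplicity of $\cA\otimes_\kk\cA^{\mathrm{op}}$) would work for any simple algebra with center $\kk$, not just $M_m(\kk)$ --- not needed here, but conceptually cleaner. Your sketched alternative through $\fW_m$ and multilinearity also works (distinctness of the variable groups guarantees that distinct noncommutative monomials have distinct commutative images, and one then uses that $\kk$ is infinite), and your remark that pairwise distinctness of $Y_1,\dots,Y_k$ is essential is well taken: it is precisely why Lemma \ref{l:stop} passes to auxiliary letters $Z_0,\dots,Z_k$ before applying this lemma.
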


\begin{proof}
We prove the claim by induction on $k$. Assume $f\in\cA^{Y_1}$ vanishes on $\cA$ and consider the homomorphism of $\kk$-algebras
$$\phi:\cA\otimes_{\kk}\cA^{\text{op}}\to \End_{\kk}(A),\quad a\otimes b\mapsto L_a R_b,$$
where $L_a$ and $R_b$ are multiplications by $a$ on the left and by $b$ on the right, respectively. It is a classical result (see e.g. \cite[Theorem 3.1]{La}) that the $\kk$-algebra 
$\cA\otimes\cA^{op}$ is simple. Therefore $\phi$ is injective. Since $f$ can be considered as an element of $\cA\otimes\cA^{op}$ and its evaluation then corresponds to $\phi(f)$, we have $f=0$.

Now assume that statement holds for $k$. Suppose $f\in\cA^{Y_1}\cdots\cA^{Y_{k+1}}$ vanishes on $\cA$. We can write it as
$$f=\sum_i f_iY_{k+1} a_i,$$
where $a_i\in \cA$ are $\kk$-linearly independent and $f_i\in \cA^{Y_1}\cdots\cA^{Y_k}$. Let 
$b_1,\dots b_k\in \cA$ be arbitrary and $\tilde{f}=f(b_1,\dots,b_k,y_{k+1})$. By the basis of induction we have $\tilde{f}=0$. Since $a_i$ are $\kk$-linearly independent, we have $f_i(b_1,\dots,b_k)=0$ for all $i$. Since $b_j$ were arbitrary, we have $f_i=0$ by the induction hypothesis and therefore $f=0$.
\end{proof}

\subsection{Generalized series}\label{s:gs}

The completion of $\agp$ with respect to the $(\y)$-adic topology is the algebra of \textbf{generalized formal series} over $\cA$ and is denoted by $\ags$.
We refer the reader to \cite{KVV,Voi04,Voi10,AM,HKM,Pas} for analytic approaches to free function theory.

 If a series $S\in \ags$ is written as
\begin{equation}\label{exp}
S=\sum_{w\in \gp} \sum_{i=1}^{n_w}a_{w,i}^{(0)}w_1a_{w,i}^{(1)}w_2\cdots w_{|w|}a_{w,i}^{(|w|)},
\end{equation}
then let
\begin{equation}
[S,w]=\sum_i a_{w,i}^{(0)}w_1a_{w,i}^{(1)}w_2\cdots w_{|w|}a_{w,i}^{(|w|)}.
\end{equation}
This is a well-defined element of $\agp$ even though the expansion \eqref{exp} is not uniquely determined. Note that the homogeneous components of $S$, i.e., $\sum_{|w|=h}[S,w]$ for fixed $h\in\N_0$, also belong to $\agp$.

In this exposition, we treat generalized series in a purely algebraic way. However, if $\kk$ is a field of real or complex numbers, one can also consider matrix norms and therefore study the convergence of generalized series in the norm topology; see \cite[Section 8.2]{KVV} for details.

As in the classical setting, a series $S\in\ags$ is invertible if and only if $[S,1]$ is invertible in $\cA$; in that case we have $[S^{-1},1]=a^{-1}$ and
\begin{equation}
\label{e:inv}
[S^{-1},w]=-\sum_{\substack{uv=w,\\ v\neq w}} a^{-1}[S,u][S^{-1},v]
\end{equation}
for $|w|>0$.

A series $S$ is \textbf{recognizable} if for some $n\in\N$ there exist $\cc\in \cA^{1\times n}$, $\bb\in \cA^{n\times 1}$ and 
$A^{Y_i}\in (\cA^{Y_i})^{n\times n}$ for $Y_i\in \y$ such that $[S,w]=\cc A^w\bb$ for all $w\in \gp$, where the notation
$$A^w = A^{w_1}\cdots A^{w_{|w|}} \in(\cA^w)^{n\times n}$$
is used. In this case $(\cc,A,\bb)$ is called a \emph{linear representation of dimension $n$} of $S$. Observe that one can also write
$$S=\sum_w \cc A^w \bb=\cc\left(\sum_w A^w \right)\bb
=\cc\left(I_n-\sum_{i=1}^g A^{Y_i}\right)^{-1}\bb.$$

The following theorem shows that the set of recognizable series is closed under basic arithmetic operations.

\begin{thm}\label{t:arithmetic}
For $i\in\{1,2\}$ let $S_i$ be a recognizable series with representation $(\cc_i,A_i,\bb_i)$ of dimension $n_i$ and $S$ be an invertible recognizable series with representation $(\cc,A,\bb)$ of dimension $n$. Then:
\begin{enumerate}[\rm(1)]
\item $1$ is recognizable with representation $(1,0,1)$;
\item $Y_i+a$ for $a\in\cA$ is recognizable with representation
\begin{equation}\label{a1}
\left(
\begin{pmatrix}1& a\end{pmatrix},
\begin{pmatrix}0&\delta_{ij}Y_j\\ 0&0\end{pmatrix},
\begin{pmatrix}0\\ 1\end{pmatrix}
\right)
\end{equation}
of dimension 2, where $\delta_{ij}$ is the Kronecker's delta;
\item $S_1+a S_2$ is recognizable with a representation
\begin{equation}\label{a2}
\left(
\begin{pmatrix}\cc_1 & a\cc_2\end{pmatrix},
\begin{pmatrix}A_1&0\\0&A_2\end{pmatrix},
\begin{pmatrix}\bb_1\\ \bb_2\end{pmatrix}
\right)
\end{equation}
of dimension $n_1+n_2$;

\item $S_1S_2$ is recognizable with a representation
\begin{equation}\label{a3}
\left(
\begin{pmatrix}\cc_1 & \cc_1\bb_1\cc_2\end{pmatrix},
\begin{pmatrix}A_1&A_1\bb_1\cc_2 \\0&A_2\end{pmatrix},
\begin{pmatrix}0\\ \bb_2\end{pmatrix}
\right)
\end{equation}
of dimension $n_1+n_2$; 

\item $S^{-1}$ is recognizable with a representation
\begin{equation}\label{a4}
\left(
\begin{pmatrix}-a^{-1}\cc & a^{-1}\end{pmatrix},
\begin{pmatrix}A(I-\bb a^{-1}\cc)&A\bb a^{-1}\\0&0\end{pmatrix},
\begin{pmatrix}0\\ 1\end{pmatrix}
\right)
\end{equation}
of dimension $n+1$, where $a=[S,1]$.
\end{enumerate}

\end{thm}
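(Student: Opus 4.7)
The plan for all five items is direct verification that $\cc A^w \bb = [T,w]$ at every $w \in \gp$, where $T$ is the target series. Items (1), (2), (3) reduce to one-line checks: for (2) the $2\times 2$ matrices $A^{Y_j}$ are nilpotent, so $A^{Y_j}A^{Y_k} = 0$ and only $|w|\leq 1$ contribute, giving $[Y_i+a, \emptyset] = a$ and $[Y_i+a, Y_j] = \delta_{ij} Y_j$; for (3) the block-diagonal form yields $A^w = A_1^w \oplus A_2^w$ and contracting with the new row/column vectors gives $[S_1, w] + a[S_2, w]$.

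For the product (4), I would prove by induction on $|w|$ the block identity
\[
(A')^w \;=\; \begin{pmatrix} A_1^w & \displaystyle\sum_{\substack{uv=w\\ u\neq\emptyset}} A_1^u \bb_1\cc_2 A_2^v \\[1mm] 0 & A_2^w \end{pmatrix},
\]
with the convention $A_i^\emptyset = I$. The inductive step is a single block multiplication, and contracting with $\cc' = \begin{pmatrix}\cc_1 & \cc_1\bb_1\cc_2\end{pmatrix}$ and $\bb' = \begin{pmatrix}0 \\ \bb_2\end{pmatrix}$ reproduces the convolution $\sum_{uv=w}[S_1,u][S_2,v] = [S_1 S_2, w]$ valid in any formal series algebra.

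For the inverse (5), set $P_i := A^{Y_i}(I - \bb a^{-1}\cc)$. Since the bottom row of $A'$ vanishes identically, a short induction on length shows that for $w = Y_{i_1}\cdots Y_{i_k}$ with $k\ge 1$,
\[
(A')^w\,\bb' \;=\; \begin{pmatrix} P_{i_1}\cdots P_{i_{k-1}}\, A^{Y_{i_k}} \bb\, a^{-1} \\ 0 \end{pmatrix},
\]
so that $[T, w] := \cc'(A')^w\bb' = -a^{-1}\cc\, P_{i_1}\cdots P_{i_{k-1}}\, A^{Y_{i_k}}\bb\, a^{-1}$. I then argue by induction on $|w|$ using the inversion recursion \eqref{e:inv}. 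Substituting $P_{i_1} = A^{Y_{i_1}} - A^{Y_{i_1}}\bb a^{-1}\cc$ on the left and noting $\cc A^{Y_{i_1}}\bb = [S, Y_{i_1}]$ splits $[T, w]$ into two summands: one matches the $u = Y_{i_1}$ term of \eqref{e:inv} through the inductive hypothesis applied to the shorter word $Y_{i_2}\cdots Y_{i_k}$, while the other is a shorter expression of the same shape that unfolds by the same recipe. Collecting all peelings produces $-a^{-1}\sum_{uv=w,\,u\neq\emptyset}[S, u][S^{-1}, v]$, which is $[S^{-1}, w]$ by \eqref{e:inv}.

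The main obstacle is the combinatorial bookkeeping in (5): expanding $\cc P_{i_1}\cdots P_{i_{k-1}} A^{Y_{i_k}}\bb$ naively produces a sum indexed by subsets of $\{1,\ldots,k-1\}$ recording where the substitution $-\bb a^{-1}\cc$ is chosen, while \eqref{e:inv} is a sum over prefix-suffix splittings $w = uv$; the bridge between the two is the identity $\cc A^u\bb = [S, u]$ applied at the leftmost insertion site. Alternatively, one can package the verification globally by writing $S = \cc(I - M)^{-1}\bb$ with $M := \sum_i A^{Y_i}$ and checking $\cc'(I - M')^{-1}\bb' = S^{-1}$ via a Sherman--Morrison--Woodbury manipulation on the block $(I - M')^{-1}$, sidestepping word-level bookkeeping altogether.
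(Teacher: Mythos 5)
Your proposal is correct and follows essentially the same route as the paper: trivial checks for (1)--(3), an induction on word length establishing the block-product/convolution identity for (4), and for (5) an induction on $|w|$ combined with the recursion \eqref{e:inv}, with your ``peeling'' of the factors $A^{Y_{i_1}}(I-\bb a^{-1}\cc)$ being just the expanded form of the telescoping identity the paper collapses. The Sherman--Morrison-style global alternative you mention is only an aside and is not needed.
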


\begin{proof}
(1) and (2) are trivial.

(3) This is clear since
$$[S_1+aS_2,w]=[S_1,w]+a[S_2,w]=\cc_1 A_1^w\bb_1+a\cc_2 A_2^w\bb_2=
\begin{pmatrix}\cc_1&a\cc_2\end{pmatrix}\cdot
\begin{pmatrix}A_1^w&0\\0&A_2^w\end{pmatrix}\cdot
\begin{pmatrix}\bb_1\\ \bb_2\end{pmatrix}.$$

(4) If $w=w_1\cdots w_\ell$, let $M_j=A_1^{w_j}$, $N_j=A_2^{w_j}$ and $Q=\bb_1\cc_2$. Since
\[ \begin{split}
[S_1S_2,w]
&=\sum_{uv=w}[S_1,u][S_2,w]
=\cc_1\bb_1\cc_2 A_2^w\bb_2+\cc_1\left(\sum_{uv=w,|u|>0} A_1^u \bb_1\cc_2 A_2^v \right)\bb_2 \\
&=(\cc_1 Q)N_1\cdots N_\ell c_2+\cc_1\left(
\sum_{k=1}^\ell M_1\cdots M_k Q N_{N+1}\cdots N_\ell \right)\bb_2,
\end{split} \]
it is enough to prove the equality
$$\prod_{j=1}^\ell \begin{pmatrix}M_j&M_jQ\\0&N_j\end{pmatrix}=
\begin{pmatrix}\prod_{j=1}^\ell M_j&
\sum_{k=1}^\ell \left(\prod_{j=1}^k M_j\right)Q\left(\prod_{j=k+1}^\ell N_j\right)
\\0&\prod_{j=1}^\ell N_j\end{pmatrix}$$
and this can be easily done by induction on $\ell$.

(5) If $w=w_1\cdots w_\ell$, let $M_j=A^{w_j}$ and $Q=\bb a^{-1}\cc$. The statement is proved by induction on $\ell$. It obviously holds for $\ell=0$, so let $\ell\ge 1$. Note that representation \eqref{a4} yields a series $T$ with
$$[T,w]=-a^{-1}\cc A^{w_1}(I-\bb a^{-1}\cc)\cdots A^{w_{\ell-1}}(I-\bb a^{-1}\cc)
A^{w_\ell}\bb a^{-1}.$$
By \eqref{e:inv} and the inductive step we have
\[ \begin{split}
[S^{-1},w]
&=-\sum_{uv=w,v\neq w} a^{-1}[S,u][T,v] \\
&=-a^{-1}\cc\left(M_1\cdots M_\ell-\sum_{i=1}^{l-1} M_1\cdots M_i Q M_{i+1}(I-Q)\cdots M_{\ell-1}(I-Q)
M_\ell\right)\bb a^{-1} \\
&=-a^{-1}\cc M_1(I-Q)\cdots M_{\ell-1}(I-Q)M_\ell\bb a^{-1} \\
&=[T,w]
\end{split} \]
and thus the statement holds.
\end{proof}

Let $(\cc,A,\bb)$ be a linear representation of dimension $n$. For $N\in\N\cup\{0\}$ we define
$$\cU_N=\{\uu\in \cA^{1\times n}\colon \uu A^w\bb=0\ \forall |w|\le N\}.$$
These are left $\cA$-modules and $\cU_N\supseteq \cU_{N+1}$. Furthermore, let
$$\cU_\infty=\bigcap_{N\in\N}\cU_N.$$
In the language of control theory, this module represents an obstruction for the {\it controllability} of the realization \cite[Section 5]{BGM} of a rational function defined in 0.

\begin{lem}\label{l:stop}
If $\cU_N=\cU_{N+1}$, then $\cU_{N+1}=\cU_{N+2}$.
\end{lem}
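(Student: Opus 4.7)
The plan is to mirror the classical linear-systems argument that once the chain of controllability kernels stabilizes for one step, it stays stable thereafter. The inclusion $\cU_{N+2}\subseteq \cU_{N+1}$ is immediate from the definition, so it suffices to prove $\cU_{N+1}\subseteq \cU_{N+2}$ under the hypothesis; I will do this by ``peeling off'' the leading letter from a word $w=Y_kw'$ of length $N+2$ and reducing to the stabilization already in hand at step $N$.

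The setup step is to attach to each $\uu\in \cA^{1\times n}$ and each $k\in\{1,\dots,g\}$ a family of ``derivative'' rows. Fix a $\kk$-basis $\{e_\mu\}$ of $\cA$; since $\cA Y_k\cA\cong \cA\otimes_\kk \cA$ as $\kk$-vector spaces, each entry of $\uu A^{Y_k}\in (\cA Y_k\cA)^{1\times n}$ admits a unique decomposition as a sum of the form $\sum_\mu e_\mu Y_k(\,\cdot\,)$ with right coefficients in $\cA$. Assembling entries, this gives
$$\uu A^{Y_k}=\sum_\mu e_\mu\,Y_k\,\vv_\mu^{(k)}(\uu),\qquad \vv_\mu^{(k)}(\uu)\in \cA^{1\times n}.$$
The analogous $\kk$-vector space identification $\cA^{Y_kw''}\cong \cA\otimes_\kk \cA^{w''}$ provides the key cancellation rule: a sum of the shape $\sum_\mu e_\mu Y_k f_\mu$ vanishes in $\cA^{Y_kw''}$ if and only if every $f_\mu$ vanishes in $\cA^{w''}$.

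With this apparatus in place the main argument has two short steps. First, for $\uu\in \cU_{N+1}$ and any word $w''$ with $|w''|\le N$, expanding
$$\uu A^{Y_kw''}\bb=\sum_\mu e_\mu Y_k\bigl(\vv_\mu^{(k)}(\uu)A^{w''}\bb\bigr)$$
and noting that the left-hand side vanishes (since $|Y_kw''|\le N+1$), the cancellation rule forces $\vv_\mu^{(k)}(\uu)A^{w''}\bb=0$ for every $\mu$. Hence each $\vv_\mu^{(k)}(\uu)$ lies in $\cU_N$, which by hypothesis coincides with $\cU_{N+1}$. Second, for any $w=Y_kw'$ with $|w'|=N+1$,
$$\uu A^w\bb=\sum_\mu e_\mu Y_k\bigl(\vv_\mu^{(k)}(\uu)A^{w'}\bb\bigr)=0,$$
because $\vv_\mu^{(k)}(\uu)\in \cU_{N+1}$ kills $A^{w'}\bb$. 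This yields $\uu\in \cU_{N+2}$.

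The main obstacle is the tensor-product identification underlying both the well-definedness of the decomposition $\uu A^{Y_k}=\sum_\mu e_\mu Y_k\vv_\mu^{(k)}(\uu)$ and the componentwise vanishing principle applied inside $\cA^{Y_kw''}$; once that is in place, the rest is precisely the ``shift'' argument familiar from the stabilization of the reachability filtration in classical linear systems.
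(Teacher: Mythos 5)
Your proof is correct, but it takes a genuinely different route from the paper's. You argue directly: using the coproduct structure of $\agp$ you peel off the leading letter, writing $\uu A^{Y_k}=\sum_\mu e_\mu Y_k \vv_\mu^{(k)}(\uu)$ for a $\kk$-basis $\{e_\mu\}$ of $\cA$, and the identification $\cA^{Y_kw''}\cong\cA\otimes_\kk\cA^{w''}$ (which indeed holds, since $\cA^w\cong\cA^{\otimes(|w|+1)}$ inside the free product, with the letters only marking positions) gives the uniqueness of the right coefficients; then vanishing of $\uu A^{Y_kw''}\bb$ for $|w''|\le N$ forces $\vv_\mu^{(k)}(\uu)\in\cU_N=\cU_{N+1}$, and multiplying back kills all words of length $N+2$. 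The paper instead proves the contrapositive: if $\uu\notin\cU_{N+2}$, it uses Lemma \ref{l:difvar} to specialize the leading letter at a suitable point $b\in\cA$ so that nonvanishing persists, concludes $\uu A^{Y_i}|_{Y_i=b}\notin\cU_{N+1}=\cU_N$, and then lifts the witnessing word of length $\le N$ back to a formal one, contradicting $\uu\in\cU_{N+1}$. Your argument buys elementarity and generality: it never evaluates at points of $\cA$, so it needs neither Lemma \ref{l:difvar} nor the simplicity of $\cA\otimes\cA^{\mathrm{op}}$, and it is the exact analogue of the classical shift/stabilization argument for reachability filtrations; the paper's argument instead reuses the evaluation machinery it has already set up for generalized polynomials. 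The only point you should spell out a bit more is the justification of the cancellation rule itself, i.e.\ that the sum $\bigoplus_w\cA^w$ is direct in $\agp$ and that $\cA^{Y_kw''}\cong\cA\otimes_\kk\cA^{w''}$ via $a\otimes f\mapsto aY_kf$; this is standard for coproducts over a field and is consistent with the paper's remark that $\cA^w$ depends only on $|w|$, but it is the load-bearing step of your proof.
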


\begin{proof}
If $\uu\notin \cU_{N+2}$, then $\uu A^{Y_iw} \bb \neq0$ for some $Y_i\in \y$ and $|w|=k\le N+1$. Let $f\in\cA^{Y_iw}$ be the nonzero entry of $\uu A^{Y_iw} \bb$. Let $\{Z_0,\dots,Z_k\}$ be auxiliary freely noncommuting letters; since $\cA^{Y_iw}$ as a $\cA$-module depends only on length of $Y_iw$, we can treat $f$ as an element of $\cA^{Z_0\cdots Z_k}$ and 
$f(Z_0,\dots,Z_k) \neq0$. By Lemma \ref{l:difvar}, there exists $b\in \cA$ such that $f(b,Z_1,\dots,Z_k) \neq0$. Going back to module $\cA^w\cong \cA^{Z_1\cdots Z_k}$, we have 
$f(b,w_1,\dots,w_k)\neq0$ and so $\uu A^{Y_i}|_{Y_i=b}A^w\bb\neq0$. Therefore 
$\uu A^{Y_i}|_{Y_i=b}\notin \cU_{N+1}=\cU_N$ and hence 
$\uu A^{Y_i}|_{Y_i=b}A^{w'}\bb\neq0$ for some 
$|w'|=k'\le N$. Let $g\in\cA^{Z_0w'}$ be such entry of 
$\uu A^{Y_i}|_{Y_i=Z_0}A^{w'}\bb$ that $g(b,w'_1,\dots,w'_{k'})\neq0$. Thus 
$g\neq0$ and also $g(Y_i,w'_1,\dots,w'_{k'})\neq0$. Therefore $\uu A^{Y_i}A^{w'}\bb\neq0$ 
and $\uu\notin \cU_{N+1}$.
\end{proof}

\begin{lem}\label{l:termination}
If a representation $(\cc,A,\bb)$ is of dimension $n$, then $\cU_\infty=\cU_{mn-1}$, where $m^2=\dim_{\kk}\cA$.
\end{lem}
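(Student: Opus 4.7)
The plan is to show that the descending chain of left $\cA$-submodules $\cU_0 \supseteq \cU_1 \supseteq \cdots$ must stabilize by index $mn-1$, by combining Lemma \ref{l:stop} with a composition-length bound on $\cA^{1\times n}$. First I would record the key module-theoretic fact: since $\cA=M_m(\kk)$ is simple Artinian with unique simple left module $L$ of $\kk$-dimension $m$ (a minimal column ideal), $\cA$ decomposes as $L^m$ as a left $\cA$-module, so $\cA^{1\times n}\cong L^{mn}$ has composition length exactly $mn$. Each $\cU_N$ is a left $\cA$-submodule of $\cA^{1\times n}$, so any strictly descending subchain is bounded in length by $mn$.

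Next I would dispose of the trivial case $\bb=0$: then $\cU_N=\cA^{1\times n}$ for every $N\ge 0$ and the conclusion $\cU_\infty=\cU_{mn-1}$ is immediate. Assuming henceforth $\bb\neq 0$, not every $\uu\in\cA^{1\times n}$ annihilates $\bb$, so $\cU_0\subsetneq\cA^{1\times n}$ and hence the composition length of $\cU_0$ is at most $mn-1$.

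Then I would argue by contradiction: suppose $\cU_{mn-1}\supsetneq\cU_{mn}$. By Lemma \ref{l:stop}, $\cU_N=\cU_{N+1}$ for any $N\le mn-1$ would propagate forward to give $\cU_{mn-1}=\cU_{mn}$; so the negated conclusion forces $\cU_N\supsetneq\cU_{N+1}$ for every $N\in\{0,1,\ldots,mn-1\}$. This produces a strictly descending chain
\[
\cU_0\supsetneq\cU_1\supsetneq\cdots\supsetneq\cU_{mn}
\]
of left $\cA$-submodules of $\cU_0$, forcing the composition length of $\cU_0$ to be at least $mn$, contradicting the bound $mn-1$. Hence $\cU_{mn-1}=\cU_{mn}$, and iterating Lemma \ref{l:stop} gives $\cU_{mn-1}=\cU_N$ for all $N\ge mn-1$, whence $\cU_\infty=\cU_{mn-1}$.

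The main subtlety is invoking the correct length for $\cA^{1\times n}$: one must view it as a left $\cA$-module (length $mn$) rather than a $\kk$-vector space (dimension $nm^2$), otherwise the bound would be weaker than the stated $mn-1$. Everything else is a bookkeeping combination of Lemma \ref{l:stop} with the Jordan--Hölder/Artinian finiteness of the submodule chain.
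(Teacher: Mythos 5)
Your proof is correct and is essentially the paper's argument: both reduce the claim to stabilization of the descending chain of left $\cA$-submodules of $\cA^{1\times n}$ via Lemma \ref{l:stop}, using $\bb\neq 0$ to make $\cU_0$ proper and a length count to cap the number of strict drops at $mn-1$. The only difference is bookkeeping — you count composition length (each step drops length by at least $1$ out of $mn$), while the paper counts $\kk$-dimension and invokes Morita equivalence to note every drop is a multiple of $m$ inside the $m^2n$-dimensional space — and these are equivalent.
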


\begin{proof}
The statement trivially holds for $\bb=0$, so we assume $\bb\neq0$. By Morita equivalence between $M_m(\kk)$ and $\kk$, the dimension of every left $\cA$-module as a vector space over 
$\kk$ is divisible by $m$. Since the dimension of the vector space $\cA^{1\times n}$ over $k$ is $m^2n$ and $\bb\neq0$, the descending chain of left $\cA$-modules $\{\cU_N\}_{N\in\N}$ stops by Lemma \ref{l:stop} and
\[\cA^{1\times n}\supseteq \cU_0\supseteq \cU_1\supseteq \cdots \supseteq \cU_{mn-1}
=\cU_{mn}=\cdots. \qedhere \]
\end{proof}

\begin{prop}\label{p:RSIbound}
If $(\cc,A,\bb)$ is a representation of dimension $n$ and $\cc A^w\bb=0$ for all $|w|<mn$, then 
$(\cc,A,\bb)$ represents the zero series.
\end{prop}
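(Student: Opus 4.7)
The plan is to reduce the proposition directly to Lemma \ref{l:termination}. Recall that the lemma identifies the stabilized intersection $\cU_\infty = \bigcap_N \cU_N$ with $\cU_{mn-1}$, where $\cU_N = \{\uu \in \cA^{1\times n} : \uu A^w\bb = 0 \text{ for all } |w| \le N\}$.

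First I would translate the hypothesis into module language. The assumption is that $\cc A^w\bb = 0$ for every word $w$ with $|w| < mn$, i.e., for every $w$ with $|w| \le mn-1$. By the very definition of $\cU_{mn-1}$, this says that the row $\cc \in \cA^{1\times n}$ lies in $\cU_{mn-1}$. Now invoking Lemma \ref{l:termination}, we get $\cc \in \cU_{mn-1} = \cU_\infty$, which by definition means $\cc A^w\bb = 0$ for all $w \in \gp$, of any length.

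Finally I would conclude by recalling that for the linear representation $(\cc,A,\bb)$ the associated series $S$ satisfies $[S,w] = \cc A^w\bb$ for every word $w$. Hence every homogeneous coefficient of $S$ vanishes and $S = 0$, as required.

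There is essentially no obstacle here: the heavy lifting has already been done in Lemmas \ref{l:stop} and \ref{l:termination}, whose combined effect is precisely that a representation which annihilates words up to length $mn-1$ must annihilate words of all lengths. The only point to be careful about is the strict versus non-strict inequality in the hypothesis (``for all $|w|<mn$''), which matches the bound $\cU_{mn-1}$ rather than $\cU_{mn}$; this is exactly what the proposition is stated to give.
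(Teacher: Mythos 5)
Your proof is correct and follows the paper's argument exactly: the hypothesis places $\cc$ in $\cU_{mn-1}$, which equals $\cU_\infty$ by Lemma \ref{l:termination}, so all coefficients $\cc A^w\bb$ vanish and the series is zero. Nothing further is needed.
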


\begin{proof}
The assumption asserts $\cc\in\cU_{mn-1}$, so $\cc A^w\bb=0$ for all $w\in\gp$ by 
Lemma \ref{l:termination}.
\end{proof}

Let $r$ be a rational expression in $\x$ and assume it is defined in $\pp\in M_m(\kk)^g$. Then $r$ can be formally expanded into a generalized series about $\pp$; more precisely, $r(\y+\pp)$ can be considered as an element of $\ags$. Since $r(\y+\pp)$ lies in the rational closure of $\agp$, it is a recognizable series by Theorem \ref{t:arithmetic}. We say that a linear representation $(\cc,A,\bb)$ is a \textbf{realization of $r$ about $\pp$} if $(\cc,A,\bb)$ is a representation of $r(\y+\pp)$.

\begin{exa}\label{ex:real}\hfill\\[-4ex]
\begin{enumerate}[\rm(1)]

\item Let $r=X_1^{-1}(1-\sum_{j=2}^g X_j Y_j)$. The right-hand side expression is defined in the scalar point $(1,0,\dots,0)$; so $r=S(X_1-1,X_2,Y_2,\dots, X_g,Y_g)$ for $S=(Y+1)^{-1}(1-\sum_{j=2}^g X_jY_j)$ and the latter can be expanded into a (generalized) series. If $g=1$, then $S$ has a linear representation $(1,-Y,1)$ of dimension 1. Otherwise if $g\ge2$, then one can easily check that the inverse of
$$
\begin{bmatrix}
1+Y & 0 & X_2 & \cdots & X_g \\
0     & 1 &  0  & \cdots &  0  \\
0    &-Y_2&  1  & \ \cdots  &  0  \\
\vdots&\vdots& \vdots & \ddots & \vdots \\
0    &-Y_g&  0  & \cdots &  1
\end{bmatrix}
$$
equals
$$
\begin{bmatrix}
(1+Y)^{-1} & -(1+Y)^{-1}\sum_{j>1}X_jY_j & -(1+Y)^{-1}X_2 &\cdots & -(1+Y)^{-1}X_g \\
0     & 1 &  0  &  \cdots &  0  \\
0    &Y_2& 1 &\ \cdots &  0  \\
\vdots&\vdots& \vdots &\ddots & \vdots \\
0    &Y_g&  0 &\cdots &  1
\end{bmatrix}
$$
so $S$ has a representation
$$\left(
\begin{bmatrix} 1&0&0&\cdots&0\end{bmatrix},\ \ \ 
-Y E_{11},\ -X_2 E_{13},\ Y_2 E_{32},\ \dots,\ -X_g E_{1,g+1},\ Y_g E_{g+1,2},\ \ \ 
\begin{bmatrix} 1\\1\\0\\\vdots\\0\end{bmatrix}
\right)$$
of dimension $g+1$, where $E_{ij}\in \kk^{(g+1)\times (g+1)}$ are the standard matrix units.

\item Next consider $r=(X_1X_2-X_2X_1)^{-1}$. Let $\pp=(P_1,P_2)$ be a pair of $2\times2$ matrices such that $Q=(P_1P_2-P_2P_1)^{-1}$ exists, e.g. $P_1=E_{12}$ and $P_2=E_{21}$. Then
$r(X_1,X_2)=S(X_1-P_1,X_2-P_2)$, where
$$S=Q(1-(P_2Y_1-Y_1P_2)Q-(Y_2P_1-P_1Y_2)Q-(Y_2Y_1-Y_1Y_2)Q)^{-1}.$$
Using the blockwise inversion formula (see e.g. \cite[Subsection 0.7.3]{HJ}) it can be easily seen that $S$ has a representation
$$\left(
\begin{pmatrix}Q &0&0\end{pmatrix},\ 
\begin{pmatrix}
-Y_1P_2Q+P_2Y_1Q &Y_1 &0 \\
0 &0 &0 \\
-Y_1Q &0 &0
\end{pmatrix},\ 
\begin{pmatrix}
Y_2P_1Q-P_1Y_2Q &0 &-Y_2 \\
-Y_2Q &0 &0 \\
0 &0 &0
\end{pmatrix}, \
\begin{pmatrix}1\\0\\0\end{pmatrix}
\right)$$
of dimension 3.

\end{enumerate}
\end{exa}

We can now give the main result of this subsection, namely explicit size bounds required for testing whether a nc rational expression is a rational identity.

\begin{thm}\label{t:RIbound}
Let $r$ be a rational expression in $\x$. Assume $r$ admits a realization of dimension $n$ about a point in $M_m(\kk)^g$. If $r$ is an identity on matrices of size 
$N=m\lceil\tfrac{mn}{2}\rceil$, then $r$ is a rational identity.
\end{thm}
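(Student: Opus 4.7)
The plan is to exploit the given realization $(\cc,A,\bb)$ of dimension $n$ about some $\pp\in M_m(\kk)^g$, which expresses $r(\y+\pp)$ as the recognizable generalized series $\sum_{w\in\gp}\cc A^w\bb\in\ags$. Set $s=\lceil mn/2\rceil$, so $N=ms$, and embed $\cA=M_m(\kk)\hookrightarrow M_N(\kk)$ via $a\mapsto a\otimes I_s$. Since tensoring with the identity preserves invertibility of every matrix appearing in the computation of $r(\pp)$, the point $\pp\otimes I_s$ lies in $\dom r$, and by hypothesis $r$ vanishes on all of $\dom r\cap M_N(\kk)^g$.

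The pivotal step is to show that for each $h\ge 0$ the homogeneous component $S_h:=\sum_{|w|=h}\cc A^w\bb\in\agp$ vanishes identically on $M_N(\kk)^g$. Given $\vv\in M_N(\kk)^g$, I would consider the scalar-parameter function $\varphi(t):=r(t\vv+\pp\otimes I_s)$: entrywise rational in $t$, it vanishes on the non-empty Zariski-open set of those $t$ for which $t\vv+\pp\otimes I_s\in\dom r$, hence is identically zero. Its formal Taylor expansion about $t=0$, read off from the generalized-series expansion of $r(\y+\pp)$, is $\sum_h t^h\,S_h(\vv)$, so every coefficient $S_h(\vv)$ must vanish.

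For $h<mn$ one then invokes Proposition \ref{p:GPI} to conclude $S_h=0$ in $\agp$: since $\deg S_h=h$ and $\lceil(h+1)/2\rceil\le s$, vanishing on $M_N(\kk)^g$ is already as strong as the size hypothesis of that proposition (via \eqref{e:mrfunctor2} the matrix entries have degree $h<2s$ and would otherwise be nontrivial polynomial identities on $M_s(\kk)$). Because different words $w$ of the same length $h$ contribute to linearly disjoint $\cA$-bimodule summands $\cA^w$ of the degree-$h$ part of $\agp$, the identity $S_h=0$ separates into $\cc A^w\bb=0$ for every $|w|=h$. Aggregating over $h<mn$, Proposition \ref{p:RSIbound} yields that $(\cc,A,\bb)$ represents the zero series; equivalently $r(\y+\pp)=0$ in $\ags$, so $r$ is a rational identity.

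The delicate step is the first one, namely transferring vanishing of the rational function $r$ on size-$N$ matrices into vanishing of each homogeneous piece $S_h$ of its generalized-series expansion; the one-parameter scaling by $t$ is what makes this work, and one must verify that $\pp\otimes I_s$ is indeed regular for $r$ so that the Taylor expansion at $t=0$ is legitimate. The auxiliary direct-sum decomposition of $\agp$'s homogeneous components along words in $\y$ is a routine but crucial bookkeeping point, read off from the isomorphism \eqref{e:mrfunctor2}.
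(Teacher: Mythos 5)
Your proposal is correct and follows essentially the same route as the paper's proof: the paper also passes to size-$N$ evaluations near $\pp$ (via generic matrices and formal differentiation of $M(\pp+t(\x(N)-\pp))$ in $t$, which is the same one-parameter scaling you use with an arbitrary tuple $\vv$), kills each homogeneous component of the generalized series by Proposition \ref{p:GPI}, and concludes with Proposition \ref{p:RSIbound}. Your explicit remark that the degree-$h$ part of $\agp$ splits as a direct sum of the bimodules $\cA^w$, so that $S_h=0$ forces $\cc A^w\bb=0$ for each word, is a bookkeeping point the paper leaves implicit.
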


\begin{proof}
Let $\pp\in M_m(\kk)^g\cap \dom r$ and $(\cc,A,\bb)$ be a realization of $r$ about $\pp$ of dimension $n$. Let $\x(N)$ be the tuple of generic $N\times N$ matrices, i.e. 
$X_j(N)=(x_{\imath\jmath}^{(j)})_{\imath\jmath}$, 
where $x_{\imath\jmath}^{(j)}$ are independent commuting variables. Because $M_m(\kk)^g\cap \dom r\neq \emptyset$ and $N$ is a multiple of $m$, we also have $M_N(\kk)^g\cap \dom r\neq \emptyset$, so $r$ can be evaluated on $\x(N)$. Then $r(\x(N))$ is a matrix of commutative rational functions and the matrix of their expansions about $\pp$ is
\begin{equation}
M(\x(N))=\sum_w\cc\left(A^w|_{\y=\x(N)-\pp}\right)\bb.
\end{equation}
The formal differentiation of these commutative power series yields
\begin{equation}\label{der}
\frac{\der}{\der t^h} M\big(\pp+t(\x(N)-\pp)\big)\Big|_{t=0}
=h!\sum_{|w|=h}\cc \left(A^w|_{\y=\x(N)-\pp}\right)\bb.
\end{equation}
If $r$ vanishes on matrices of size $N$, then $r(\x(N))=0$ and so $M(\x(N))=0$; therefore the left-hand the side of \eqref{der} equals 0 for every $h$, hence the same holds for the right-hand side. Since $\sum_{|w|=h}\cc (A^w|_{\y=\x-\pp})\bb$ is a generalized polynomial of degree $h$, we have $\cc A^w\bb=0$ for all $|w|<mn$ by Proposition \ref{p:GPI}. Finally $r$ is a rational identity by Proposition \ref{p:RSIbound}.
\end{proof}

\subsection{Bounds for grr ideals}\label{s:bounds}

We now have enough tools at our disposal to prove the main result of this section.

\begin{thm}\label{t:rrbounds}
Let $\cI$ be grr ideal with rational resolvent $r=(r_1,\dots, r_k)$. Assume there is 
a tuple of $m\times m$ matrices $\pp\in \dom r$ and that rational functions $r_j$ can be defined by rational expressions with realizations about $\pp$ of dimensions at most $n$.

If $f\in \kk\ax$ is of degree $u$ and has $v$ terms, and vanishes on $Z(\cI)\cap M_N(\kk)^g$, where
\beq\label{eq:N}
N=m\lceil\tfrac{m u v \max(n,2)}{2}\rceil,
\eeq
then $f$ vanishes on $Z(\cI)$.
\end{thm}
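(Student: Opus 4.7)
The plan is to reduce the claim via the grr hypothesis to the question of whether a certain rational expression in $\x'$ alone is a rational identity, and then invoke the size bound of Theorem \ref{t:RIbound}. Since $\cI$ is grr with resolvent $r$, the definition of grr together with Proposition \ref{prop1}(ii) tells us that a polynomial $p\in\fr{\x}$ satisfies $p|_{Z(\cI)}=0$ iff $p|_{\Gamma(r)}=0$ iff the rational expression $\tilde p:=p(\x',r(\x'))$ is a rational identity. Moreover, since $\Gamma(r)\subseteq Z(\cI)$, every point of $\dom r\cap M_N(\kk)^{|\x'|}$ gives rise via $r$ to a point of $Z(\cI)\cap M_N(\kk)^g$, so the hypothesis on $f$ forces $\tilde f$ to vanish on $\dom r\cap M_N(\kk)^{|\x'|}$. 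It therefore suffices to show that $\tilde f$ is a rational identity, and for this I apply Theorem \ref{t:RIbound} at the point $\pp\in M_m(\kk)^{|\x'|}$.

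To apply Theorem \ref{t:RIbound} I need a realization of $\tilde f$ about $\pp$ of controlled dimension. Write $f=\sum_{i=1}^v \lambda_i w_i$ with $\lambda_i\in\kk$ and $w_i\in\ax$ of length at most $u$. After the substitution $\x''\mapsto r(\x')$, each factor appearing in every $w_i(\x',r(\x'))$ is either a variable $X'_j$ or one of the $r_\ell(\x')$. By Theorem \ref{t:arithmetic}(2), each $X'_j$ has a realization about the corresponding coordinate of $\pp$ of dimension $2$; by assumption each $r_\ell$ has a realization about $\pp$ of dimension at most $n$. Hence each factor has a realization about $\pp$ of dimension at most $\max(n,2)$. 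Iterating the multiplication rule Theorem \ref{t:arithmetic}(4), which adds dimensions of factors, a product of at most $u$ such factors admits a realization of dimension at most $u\max(n,2)$; iterating the addition rule Theorem \ref{t:arithmetic}(3) across the $v$ monomials gives a realization of $\tilde f$ of dimension at most $uv\max(n,2)$. Plugging this into Theorem \ref{t:RIbound} produces precisely the bound $N=m\lceil m\cdot uv\max(n,2)/2\rceil$ and concludes the proof.

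The main obstacle is really just bookkeeping: one must assemble the recipe from Theorem \ref{t:arithmetic} in the order described above so that dimensions \emph{add} rather than multiply when combining realizations. The appearance of $\max(n,2)$ in \eqref{eq:N} rather than $n$ is forced not by $r$ but by the dimension-$2$ realization of a bare variable coming from Theorem \ref{t:arithmetic}(2), which is needed whenever a monomial of $f$ contains variables from $\x'$. Beyond this, the argument is a direct application of the realization machinery of Section \ref{sec3} together with Proposition \ref{prop1}(ii).
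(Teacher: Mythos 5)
Your proposal is correct and follows essentially the same route as the paper: reduce via the grr property and Proposition \ref{prop1}(ii) to showing $f(\x',r(\x'))$ is a rational identity, assemble a realization about $\pp$ using Theorem \ref{t:arithmetic} with additive dimension bookkeeping (each symbol contributing $2$ or at most $n$, giving $uv\max(n,2)$), and conclude with Theorem \ref{t:RIbound}. Your explicit remark that $\Gamma(r)\subseteq Z(\cI)$ transfers the size-$N$ vanishing hypothesis to the substituted expression is a detail the paper leaves implicit, but it is the same argument.
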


\begin{proof}
As already observed at the beginning of this section, it is enough to prove that $q=f(\x',r(\x'))$ is a rational identity. By the assumptions and Theorem \ref{t:arithmetic}, this rational function can be defined by a rational expression with realization about $\pp$ of dimension at most $uv\max(n,2)$. Indeed, when constructing the realization of $q$ using Theorem \ref{t:arithmetic} from realizations of $r_j$, every symbol in $f$ contributes a realization of dimension either 2 (if it belongs to $\x'$) or at most $n$ (if it belongs to $\x''$), and the sums and products result in addition of the dimensions of intermediate realizations. Now the statement follows by Theorem \ref{t:RIbound}.
\end{proof}

\begin{cor}
Assume the setting of Theorem {\rm\ref{t:rrbounds}}. If $f\in \kk\ax$ is of degree $d$ and vanishes on $Z(\cI)\cap M_{N'}(\kk)^g$, where
$$N'=m\lceil\tfrac{m d(g+1)^d \max(n,2)}{2}\rceil,$$
then $f$ vanishes on $Z(\cI)$.
\end{cor}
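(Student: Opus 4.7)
The plan is to derive this as a corollary of Theorem~\ref{t:rrbounds} by replacing the term-count parameter $v$ with a uniform upper bound depending only on the degree $d$ and the number of variables $g$.

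First, I would bound the number of monomials: a polynomial $f\in\kk\ax$ of degree $d$ is a $\kk$-linear combination of words in the $g$ letters of length at most $d$, so
\[
v \;\le\; \sum_{k=0}^{d} g^k \;\le\; \sum_{k=0}^{d}\binom{d}{k}g^k \;=\; (g+1)^d.
\]
Then I would rerun the proof of Theorem~\ref{t:rrbounds} with $u=d$ and with this bound $(g+1)^d$ used in place of $v$. Via Theorem~\ref{t:arithmetic}, the rational expression $q := f(\x', r(\x'))$ admits a realization about $\pp$ of dimension at most $d\,(g+1)^d\,\max(n,2)$: each of its at most $(g+1)^d$ monomials is a product of at most $d$ symbols, and each such symbol contributes a realization of dimension $2$ (for a symbol in $\x'$) or at most $n$ (for a symbol in $\x''$, where one substitutes the expression for the corresponding $r_j$). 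Applying Theorem~\ref{t:RIbound} to this realization then shows that if $q$ is an identity on matrices of size $N'$, then $q$ is a rational identity.

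Finally, I would verify that the hypothesis supplies exactly this vanishing of $q$. For any $\ua\in\dom r$ consisting of $N'\times N'$ matrices, the tuple $(\ua, r(\ua))$ lies in $\Gamma(r)\cap M_{N'}(\kk)^g\subseteq Z(\cI)\cap M_{N'}(\kk)^g$ by grr-ness of $\cI$, so $q(\ua) = f(\ua, r(\ua)) = 0$ by assumption. Hence $q$ is a rational identity, and invoking grr-ness of $\cI$ a second time forces $f|_{Z(\cI)}=0$.

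The only nontrivial input is the combinatorial estimate $v\le(g+1)^d$ on the number of monomials of a degree-$d$ noncommutative polynomial; everything else is direct bookkeeping through Theorem~\ref{t:rrbounds}, so I do not anticipate a substantial obstacle.
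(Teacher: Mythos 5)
Your proposal is correct and is exactly the intended argument: the paper states the corollary without proof precisely because it follows from Theorem \ref{t:rrbounds} (or rather its proof) by taking $u=d$ and bounding the number of terms by $v\le\sum_{k=0}^{d}g^{k}\le(g+1)^{d}$, so that $q=f(\x',r(\x'))$ has a realization of dimension at most $d(g+1)^{d}\max(n,2)$ and Theorem \ref{t:RIbound} applies with $N'$. Re-running the proof with the uniform bound in place of $v$ (rather than citing the theorem verbatim) is the right bookkeeping, since the hypothesis is vanishing at size $N'\ge N$.
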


Let us determine the bound $N$ 
in \eqref{eq:N}
from Theorem \ref{t:rrbounds} as a function of $uv$ for some concrete ideals.

\begin{exa}\label{exa:bounds}
\hfill\\[-4ex]
\begin{enumerate}[\rm(1)]

\item The ideal $\cT'=(1-X_1Y_1,1-Y_1X_1,\ldots,1-X_gY_g,1-Y_gX_g)$ is a special case of an ideal from Corollary \ref{c:exfrr1}, so it is grr. Its rational resolvent consists of the functions $X_j^{-1}$, which have realizations of dimension 1 about the point $(1,\dots, 1)$ by Example \ref{ex:real}(1). Thus Theorem \ref{t:rrbounds} implies $N=uv$.

\item Let $g\ge2$; the ideal $\cS'=(X_1Y_1+\cdots+X_gY_g-1)$ was studied in Corollary \ref{c:exfrr2} and is grr. By Example \ref{ex:real}(1), its resolvent has a realization of dimension $g+1$ about a scalar point, so $N=\lceil\tfrac{g+1}{2}uv\rceil$.

\item Consider the ideal $\cI=(1-(X_1X_2-X_2X_1)X_3)\subset \kk\ax$; evidently it is grr with rational resolvent $r=(X_1X_2-X_2X_1)^{-1}$. Hence $N=6uv$ by Example \ref{ex:real}(2).

\item Lastly, let $X=(X_{ij})_{ij}$ and $Y=(Y_{ij})_{ij}$ be $g\times g$ matrices with freely noncommuting entries and let $\cU'$ be the ideal generated by the entries of $XY-I_g$ and $YX-I_g$. Assume  $g>1$; the case $g=1$ is treated in (1). As already observed in the proof of Corollary \ref{c:exfrr1}, $\cU'$ is grr, with the resolvent consisting of the entries of $X^{-1}$. Since the $(i,j)$-th entry of this matrix equals
$$e_i^t (I_g-(-X_g+I_g))^{-1}e_j,$$
where $e_i$ and $e_j$ are the standard unit vectors in $\kk^g$, it has a realization of dimension $g$ about $I_g$. Therefore Theorem 
\ref{t:rrbounds} yields $N=\lceil\tfrac{g}{2}uv\rceil$.

\end{enumerate}

\end{exa}

\section{Null- and Positivstellens\"atze for $*$-ideals} \label{sec4}

In this section we turn our attention to algebras with involution. In addition to zero sets this setting also rises questions about positivity sets of nc polynomials \cite{HMP2}. We give Null- and Positivstellens\"atze for certain classes of rationally resolvable $*$-ideals in free $*$-algebras. We prove a Nullstellensatz for nc unitary groups and spherical isometries 
(see Theorems  \ref{thm:null3} and \ref{thm:null2}) and use it to deduce
Positivstellens\"atze in Subsection \ref{subsec:pos},
following  work of Helton, McCullough and Putinar \cite{HMP1}.
To pass between our results
for free algebras and free $*$-algebras we employ (real) algebraic geometry, cf.~Subsection \ref{subs:real}.

We shall be interested in
the free $*$-algebra. 
Let $\axs$ be the monoid freely
generated by $\x=\{X_1,\ldots, X_g\}$ and 
$\xs=\{X_1^\ss,\ldots,X_g^\ss\}$, i.e., $\axs$ consists of  words in the $2g$
noncommuting
letters $X_{1},\ldots,X_{g},X_1^\ss, \ldots,X_g^\ss$
(including the empty word $\emptyset$ which plays the role of the identity $1$).
For a field $\kk$ endowed with an involution (an automorphism of order $2$)
let $\kk\axs$ denote the $\kk$-algebra freely generated by $\x,\xs$. This is a free algebra with involution ${}^\ss$ that is uniquely determined by the involution of the base field and the rule $X_j^{\ss\ss}=X_j$. An ideal $\cI\subset\kk\axs$ is called a {\bf $*$-ideal} if $\cI^{\ss}=\cI$.

\subsection{\except{toc}{Embedding quotients by rationally resolvable $*$-ideals into skew fields with involution} \for{toc}{Embedding $*$-quotients into skew fields with involution}}

Let $\cI$ be a frr $*$-ideal in $\fr{\x,\x^\ss}$ with rational resolvent $r$. Because we are now dealing with two different partitions of nc variables, namely $\x\cup \x^\ss=\x'\cup \x''$, where the right-hand side comes from the decomposition \eqref{decom}, we introduce some additional notation:
$$\x^{(1)}=\x'\cap \x'^\ss,\quad \x^{(2)}=\x'\setminus \x^{(1)},\quad \x^{(3)}=\x^{(2)\ss},\quad \x^{(4)}=\x''\setminus \x^{(3)}.$$
For example, if $g=3$, $X'=\{X_1,X_1^\ss,X_2\}$ and $X''=\{X_2^\ss,X_3,X_3^\ss\}$, then $\x^{(1)}=\{X_1,X_1^\ss\}$, $\x^{(2)}=\{X_2\}$, $\x^{(3)}=\{X_2^\ss\}$ and $\x^{(4)}=\{X_3,X_3^\ss\}$. 
Some caution is needed when considering these partitions as arguments in an expression. If, for example, $s=s(U,V)$ is an expression in two variables, we write $s(\x^{(1)})=s(X_1,X_1^\ss)$ and 
$s(\x^{(1)\ss})=s(X_1^\ss,X_1)$, because here $\x^{(1)}$ and $\x^{(1)\ss}$ are different as lists of arguments, although they are equal as sets.

Let $r_\bullet$ be the subtuple of $r$ corresponding to $\x^{(3)}$.

\begin{lem}\label{lem1}
If the notation is as above, then
\beq\label{inv}
\x^{(2)}=r_\bullet^\ss\left(\x^{(1)\ss},r_\bullet(\x')\right)
\eeq
holds in $\ff{\x'}$.
\end{lem}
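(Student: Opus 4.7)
The plan is to verify the asserted identity by evaluation on a Zariski-dense set of matrix tuples, exploiting the $*$-symmetry of the zero set $Z(\cI)$. The key observation is that although $\Gamma(r)$ itself need not be $*$-invariant, its $*$-image still lies in $Z(\cI)$ and (generically) in $\Gamma(r)$ again, which is enough to transport the resolvent relation through the involution.

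First I would record two preliminary facts. (i) $\Gamma(r)\subseteq Z(\cI)$: since $\cI\subseteq\cI_r$ and every generator $X-r_j(\x')$ of $\cI_r$ vanishes on points of $\Gamma(r)$, writing any $c\in\cI$ as $\sum_i a_i(X_{j_i}-r_{j_i}(\x'))b_i$ in $R_r$ shows $c$ vanishes there as well. (ii) $Z(\cI)$ is closed under the natural involution $\uz\mapsto\uz^\ss$ on tuples $\uz\in M_n(\kk)^{2g}$ which swaps the matrices at $X_i$ and $X_i^\ss$ and applies the matrix $*$; indeed an easy induction on the monomial structure gives $c(\uz^\ss)=c^\ss(\uz)^\ss$ for every $c\in\fr{\x,\x^\ss}$, and $\cI^\ss=\cI$ then yields $\uz\in Z(\cI)\Rightarrow\uz^\ss\in Z(\cI)$.

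Next I would fix a generic $\ua\in\dom r$ and set $\uz=(\ua,r(\ua))\in\Gamma(r)\subseteq Z(\cI)$, so $\uz^\ss\in Z(\cI)$ by (ii). For $\ua$ in a suitable Zariski-open subset of $\dom r$, the $\x'$-part of $\uz^\ss$ still lies in $\dom r$ and the (finitely many) rational $R_r$-coefficients appearing in fixed presentations $X-r_j(\x')=\sum_i a_ic_ib_i$ with $c_i\in\cI$ (available by frr) are all defined at $\uz^\ss$; evaluating then gives $(X-r_j(\x'))(\uz^\ss)=0$, so $\uz^\ss\in\Gamma(r)$. Reading off the $X_i^\ss$-component of $\uz^\ss|_{\x''}=r(\uz^\ss|_{\x'})$ for $X_i\in\x^{(2)}$ and unwinding the action of the involution on $\uz^\ss$ then yields
\[
\ua|_{X_i}^\ss \;=\; r_{\bullet,k}\bigl(\ua|_{\x^{(1)\ss}}^\ss,\; r_\bullet(\ua)^\ss\bigr);
\]
applying the matrix $*$ and the covariance rule $r(A_1,\ldots,A_n)^\ss=r^\ss(A_1^\ss,\ldots,A_n^\ss)$ for rational evaluations gives
\[
\ua|_{X_i} \;=\; r_{\bullet,k}^\ss\bigl(\ua|_{\x^{(1)\ss}},\; r_\bullet(\ua)\bigr),
\]
which is \eqref{inv} evaluated at $\ua$. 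Since this holds on a Zariski-dense set, \eqref{inv} holds in $\ff{\x'}$.

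The main obstacle will be the generic-position argument ensuring $\uz^\ss\in\Gamma(r)$: one needs $\ua$ (of sufficiently large matrix size) such that simultaneously $\uz^\ss|_{\x'}\in\dom r$ and the finitely many rational $R_r$-coefficients in a chosen presentation of the resolvent generators via $\cI$ are defined at $\uz^\ss$. Each is a Zariski-open condition on $\ua$, so joint satisfiability reduces to a standard density argument in free noncommutative function theory; the rest is bookkeeping about the $*$-action on variables and rational expressions.
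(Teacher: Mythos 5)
Your pivotal step is the claim that for generic $\ua\in\dom r$ the $*$-image of the graph point $(\ua,r(\ua))$ again lies in $\Gamma(r)$, and this is where the proposal has a genuine gap. The $\x'$-coordinates of that $*$-image are the adjoints of the matrices assigned to $\x^{(1)\ss}$ together with the adjoints of $r_\bullet(\ua)$; the conditions you impose (that $r$ and the finitely many $R_r$-coefficients occurring in chosen frr presentations of the generators $\x''_j-r_j(\x')$ be defined there) are therefore conjugate-linear in the entries of $\ua$, so they are not complex-Zariski conditions, and --- more seriously --- openness is not the issue, nonemptiness is. You need, at all sufficiently large matrix sizes, some $\ua$ for which these rational expressions are defined at a point constrained to the special form ``(adjoints of $\ua^{(1)\ss}$, adjoint of $r_\bullet(\ua)$)'', whose second block lies in the image of $r_\bullet$; this is not covered by the standard density facts for free matrix variables (Amitsur's theorem concerns independent matrix substitutions), and it is essentially equivalent to the well-definedness of the composite $r_\bullet^\ss\bigl(\x^{(1)\ss},r_\bullet(\x')\bigr)$ --- uncomfortably close to what the lemma asserts. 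Your closing paragraph acknowledges exactly this point but dismisses it as ``standard'', which it is not in this constrained, conjugate-linear setting. Two further loose ends: passing from pointwise vanishing to an identity in $\ff{\x'}$ requires vanishing at arbitrarily large sizes together with Amitsur's rational identity theorem, not density at a single size; and the Euclidean/real-Zariski language ties the argument to $\kk=\C$ or $\R$, whereas the lemma is stated over an arbitrary field with involution.

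For comparison, the paper's proof never evaluates at $*$-points and so avoids all of these issues. It works symbolically: frr gives $\x^{(3)}-r_\bullet(\x^{(1)},\x^{(2)})\in R_r\cI R_r$; applying the involution and using $\cI^\ss=\cI$ gives $\x^{(2)}-r_\bullet^\ss(\x^{(1)\ss},\x^{(3)})\in R_{r^\ss}\cI R_{r^\ss}$; combining the two memberships and then substituting $\x'\mapsto\ua$, $\x''\mapsto r(\ua)$ (an ordinary substitution of independent matrix tuples, no adjoints) kills all the $\cI$-terms because $\Gamma(r)\subseteq Z(\cI)$, and the resulting vanishing on the common domain of the finitely many rational expressions involved is converted into the identity \eqref{inv} in $\ff{\x'}$ by Amitsur's theorem. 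If you want to salvage your geometric route through $Z(\cI)^\ss=Z(\cI)$, you must supply the missing nonemptiness/density statement for the constrained $*$-symmetric points; the symbolic route makes it unnecessary.
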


\begin{proof} By definition, $\cI_r=R_r\cI R_r$ holds and therefore
$$\x^{(3)}-r_\bullet\left(\x^{(1)},\x^{(2)}\right)\in R_r\cI R_r.$$
Since $\cI$ is closed under involution, it also follows that
$$\x^{(2)}-r_\bullet^\ss\left(\x^{(1)\ss},\x^{(3)}\right)\in R_{r^\ss}\cI R_{r^\ss},$$
where $R_{r^\ss}$ is the subring generated by $\ff{\x'^\ss}$ and $\fr{\x,\x^\ss}$. Combining these two results yields
$$\x^{(2)}-r_\bullet^\ss\left(\x^{(1)\ss},r_\bullet\left(\x^{(1)},\x^{(2)}\right)+R_r\cI R_r\right)
\in R_{r^\ss}\cI R_{r^\ss}.$$
Since $\Gamma(r)\subseteq Z(\cI)$, substituting $\x'$ and $\x''$ by $\ua$ and $r(\ua)$ in these expressions, respectively, we have
$$\ua^{(2)}-r_\bullet^\ss\left(\ua^{(1)\ss},r_\bullet(\ua)\right)=0$$
for all tuples $\ua$ in the intersection of domains of all rational functions that appear in the upper expressions. Since this set can be again realized as a domain of a rational function, the considered equalities give rise to rational identities by \cite[Theorem 16]{Ami}.
\end{proof}

\begin{prop}\label{prop3}
If a $*$-ideal $\cI$ in $\fr{\x,\x^\ss}$ satisfies the assumptions of Theorem {\rm\ref{thm1}(a)}, then $\fr{\x,\x^\ss}\!/\cI$ $*$-embeds into a free skew field with an involution.
\end{prop}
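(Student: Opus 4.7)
The plan is to equip the free skew field $\ff{\x'}$—into which $\fr{\x,\x^\ss}\!/\cI$ already embeds via the map $\Phi$ of Theorem \ref{thm1}(b)—with an involution $\sigma$ so that $\Phi$ becomes the desired $*$-embedding. First I would define $\sigma$ on the free generators $\x'$ of $\fr{\x'}$ by
\[
\sigma(X)=X^\ss\ \text{for}\ X\in\x^{(1)},\qquad \sigma(X_j^{(2)})=r_\bullet(\x')_j\ \text{for}\ X_j^{(2)}\in\x^{(2)},
\]
where the index on $r_\bullet$ is chosen so that $r_\bullet(\x')_j$ is the component corresponding to $(X_j^{(2)})^\ss=X_j^{(3)}\in\x^{(3)}$. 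This yields an anti-homomorphism $\sigma:\fr{\x'}\to\ff{\x'}$, which by the universal property of $\ff{\x'}$ as the universal skew field of fractions of $\fr{\x'}$ (together with the inverse relation on generators provided by Lemma \ref{lem1}) extends to an anti-endomorphism of $\ff{\x'}$.

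Next I would verify $\sigma^2=\id$. On $\x^{(1)}$ this is clear. For $X_j^{(2)}\in\x^{(2)}$, Lemma \ref{lem1} yields
\[
\sigma^2(X_j^{(2)})=\sigma(r_\bullet(\x')_j)=r_\bullet^\ss(\sigma(\x'))_j=r_\bullet^\ss(\x^{(1)\ss},r_\bullet(\x'))_j=X_j^{(2)}.
\]
Since two anti-endomorphisms of $\ff{\x'}$ agreeing on the generating set $\x'$ must coincide, $\sigma^2=\id$, and $(\ff{\x'},\sigma)$ is a free skew field with involution.

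It remains to check that $\Phi\circ{}^\ss=\sigma\circ\Phi$ on the generators of $\fr{\x,\x^\ss}\!/\cI$. For $X\in\x^{(1)}$ this is immediate; for $X_j^{(2)}\in\x^{(2)}$ one computes $\Phi(X_j^{(2)\ss})=\Phi(X_j^{(3)})=r_\bullet(\x')_j=\sigma(\Phi(X_j^{(2)}))$, and the case $X_j^{(3)}\in\x^{(3)}$ then reduces to Lemma \ref{lem1}. For $X\in\x^{(4)}$, both $X$ and $X^\ss$ lie in $\x''$; the required equality $\sigma(\Phi(X))=\Phi(X^\ss)$ follows by repeating the argument of Lemma \ref{lem1} for the $\x^{(4)}$-components of $r$, using that $\cI$ is closed under $\ss$ and invoking the isomorphism $\alpha$ of Proposition \ref{prop1}(a) to transfer the identity from $R_r\!/\cI_r$ into $\ff{\x'}$. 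Since $\Phi\circ{}^\ss$ and $\sigma\circ\Phi$ are anti-homomorphisms that coincide on generators, they coincide on all of $\fr{\x,\x^\ss}\!/\cI$, completing the proof.

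The main technical obstacle is the global extension of $\sigma$ from $\fr{\x'}$ to $\ff{\x'}$: Cohn's specialization theorem a priori yields only a local homomorphism, and it is precisely Lemma \ref{lem1} that supplies the explicit two-sided inverse on generators, upgrading the specialization to a globally defined anti-automorphism of $\ff{\x'}$. Once this extension is in place, the remaining verifications, while needing some care across the four blocks $\x^{(1)},\ldots,\x^{(4)}$, are all rooted in the same identity of Lemma \ref{lem1}.
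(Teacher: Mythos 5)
Your proposal is correct and follows essentially the same route as the paper: define the candidate involution on $\fr{\x'}$ by $\x^{(1)}\mapsto\x^{(1)\ss}$, $\x^{(2)}\mapsto r_\bullet(\x')$, extend it to $\ff{\x'}$ via the universal property of the universal skew field of fractions, use Lemma \ref{lem1} to see it squares to the identity, and verify that $\Phi$ intertwines the involutions (including the $\x^{(4)}$ components via the same argument as Lemma \ref{lem1}). The only step the paper makes more explicit is the local-to-global upgrade you flag at the end: it first obtains $\sigma$ only as a local antihomomorphism with domain $K\subseteq\ff{\x'}$, then observes that the composite $\sigma\circ\sigma$ is the identity on its domain by Lemma \ref{lem1}, which forces $K=\ff{\x'}$ and makes $\sigma$ a genuine involution.
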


\begin{proof} By Theorem \ref{thm1}, the homomorphism
$$\Phi: \fr{\x,\x^\ss}\to \ff{\x'},\quad p\mapsto p(\x',r(\x'))$$
induces an embedding $\fr{\x}\!/\cI \hookrightarrow \ff{\x'}$. Define an antilinear antihomomorphism of $\kk$-algebras $i:\fr{\x'}\to\ff{\x'}$ by setting
$$i(\x^{(1)})=\x^{(1)\ss},\quad i(\x^{(2)})=r_\bullet(\x').$$
By the universal property of $\ff{\x'}$ as stated in \cite[Section 4.4]{Coh1}, there exists a local antilinear antihomomorphism $ \ff{\x'}\supseteq K \to \ff{\x'}$
which we also denote $i$. By Lemma \ref{lem1}, $i(\fr{\x'})\subseteq K$, so there is a homomorphism $j:\fr{\x'}\to \ff{\x'}$ defined as $j(p)=i(i(p))$. By the same argument as above, $j$ extends to a local homomorphism of free skew fields with domain $L\subseteq K$. Since $j=\id_L$ holds by \eqref{inv} in Lemma \ref{lem1}, $j$ is injective and therefore $\ff{\x'}=L=K$ by the definition of a local homomorphism. Therefore $i$ is an involution of the free skew field $\ff{\x'}$. Now the claim follows since $\Phi$ is compatible with $i$ and the involution on $\fr{\x,\x^\ss}$.
\end{proof}

\subsection{Examples}

In this short subsection we present the main examples of interest to us: nc trigonometric and spherical polynomials, as well as nc unitary groups.

\subsubsection{$*$-representations}

From here on let $\kk=\C$. If $p\in\C\axs$ is an nc polynomial and $\ua\in\ M_n(\C)^{g}$, the evaluation $p(\ua)\in M_n(\C)$ is defined by simply replacing $X_{i}$ by $A_{i}$ and $X_i^\ss$ by $A_i^*$, where $*$ is the conjugate transposition. These polynomial evaluations give rise to finite-dimensional $*$-representations of nc polynomials. The notion of a zero set of a $*$-ideal translates accordingly:
$$Z_*(\cI):= \bigcup_{n\in\N} \{ \ua\in M_n(\C)^g \mid
\forall g\in\cI: \,g(\ua,\ua^*)=0 \}.$$

\subsubsection{nc trigonometric polynomials}

Let 
\beq\label{eq:nctrig}
\cT= (1-X_1^{\ss}X_1,\, 1-X_1X_1^{\ss},\,\ldots,\, 1-X_g^{\ss}X_g,\,1-X_gX_g^{\ss})
\eeq
be a $*$-ideal of $\C\axs$.
The quotient $\C\axs\!/\cT$ is called the algebra of {\bf nc trigonometric
polynomials}. Obvioulsy it is isomorphic to the group algebra of the free group on $g$ letters. We are interested in finite-dimensional $*$-representations
of $\C\axs\!/\cT$, i.e., we evaluate $p\in\C\axs$ at $g$-tuples
consisting of unitaries $U_j$.

\subsubsection{nc spherical polynomials}

Let 
\beq\label{eq:ncspherical}
\cS= (1-X_1^{\ss}X_1- \cdots-X_g^{\ss}X_g)
\eeq
be a $*$-ideal of $\C\axs$.
The quotient $\C\axs\!/\cS$ is called the algebra of {\bf nc spherical
polynomials}.
Here we consider evaluations of 
$p\in\C\axs$ at $g$-tuples
of {\bf spherical isometries} $\ua$.

\subsubsection{nc unitary groups}

Let $X=(X_{ij})_{ij}$ be a $g\times g$ matrix of freely noncommuting symbols and let $\cU$ be the ideal in
$$\C\axs=\frc{X_{ij},X_{ij}^{\ss}\colon 1\le i,j\le g}$$
generated by the set of $2g^2$ relations imposed by $X X^\ss=I_g$ and $X^\ss X=I_g$. Then the algebra $\C\axs\!/\cU$ is a {\bf nc unitary group}. The notion is due to Brown \cite{Bro}; see also \cite{Wo}. The points of the corresponding zero set are $g^2$-tuples $\ua=(A_{ij})_{i,j=1}^g$ of square matrices of the same size satisfying $\ua^* \ua=I_g$. We say that such tuple is a 
\textbf{$g$-partitioned unitary}. Matrices $A_{ij}$ are called blocks of $\ua$.

\subsection{Real structure on a complex variety}\label{subs:real}

The aim of this subsection is to establish a few assertions from algebraic geometry that will enable us to use the results of Section \ref{sec2} in the involution setting. By a {\bf variety} we always mean a Zariski closed subset of an affine space.

Let $V$ be a $\C$-vector space. A map $J:V\to V$ is a {\bf real structure on $V$} if it is conjugate-linear and satisfies $J^2=\id_V$. If $V_J$ is the $J$-fixed subspace of $V$, then $\dim_\R V_J=\dim_\C V$. If $\cX\subset V$ is a $\C$-variety, then $J$ is a real structure on $\cX$ if $J (\cX)\subset \cX$. In this case there is a corresponding conjugate-linear homomorphism $J^*:\C[\cX]\to \C[\cX]$ of coordinate rings. Moreover, we get a real structure $J_x$ on the tangent space $\Theta_{\cX,x}$ of $\cX$ at $x$ for any $x\in \cX$. Let $\cX_J$ be the $J$-fixed subset of $\cX$; this is a $\R$-variety. It is not hard to see that $\Theta_{\cX_J,x}=(\Theta_{\cX,x})_{J_x}$ for $x\in \cX_J$.

The following proposition is well-known (see e.g.~\cite[Lemma 1.5]{Be} or \cite[Theorem 4.9]{DE70} for stronger versions), but we provide a short proof for the sake of completeness.

\begin{prop}\label{p:real}
Let $\cX$ be an irreducible $\C$-variety with a real structure $J$ and assume there exists $x\in \cX_J$ which is a nonsingular point of $\cX$. Then $\cX_J$ is Zariski dense in $\cX$.
\end{prop}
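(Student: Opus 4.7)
The plan is to show that any regular function $f \in \C[\cX]$ vanishing on $\cX_J$ must be identically zero; since $\cX$ is irreducible, this is equivalent to Zariski density of $\cX_J$ in $\cX$.

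First I would analyze the local (analytic) structure of $\cX_J$ near the smooth fixed point $x$. Writing $n := \dim_\C \cX$, the derivative $J_x$ is a conjugate-linear involution of $\Theta_{\cX,x} \cong \C^n$, and its $+1$-eigenspace $(\Theta_{\cX,x})_{J_x}$ is a \emph{totally real} $\R$-subspace of real dimension $n$ (totally real because if $v$ is fixed by $J_x$, then $J_x(iv) = -iv$, so $iv$ lies in the $-1$-eigenspace). On a Euclidean neighborhood of $x$ the variety $\cX$ is a complex manifold and $J$ restricts to an antiholomorphic involution. The $\R$-linear map $J_x - \id$ on $\Theta_{\cX,x}$ has real rank $n$, so the real analytic implicit function theorem applied to $J - \id$ shows that $\cX_J$ is, near $x$, a real analytic submanifold of real dimension $n$ whose tangent space at $x$ is $(\Theta_{\cX,x})_{J_x}$, i.e., a totally real submanifold of $\cX$ of maximal real dimension.

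Next I would invoke the identity principle for totally real submanifolds: explicitly, I would pick a real analytic parameterization $\phi\colon (\R^n, 0) \to (\cX_J, x)$ whose derivative at $0$ has totally real image of real dimension $n$, and complexify it term-by-term (via local holomorphic coordinates on $\cX$) to a holomorphic map $\tilde\phi\colon (\C^n, 0) \to (\cX, x)$. The derivative of $\tilde\phi$ at $0$ is then $\C$-linear onto $\Theta_{\cX,x}$, so $\tilde\phi$ is a local biholomorphism by the holomorphic inverse function theorem. For any regular $f \in \C[\cX]$ with $f|_{\cX_J} = 0$, the composition $f \circ \tilde\phi$ is holomorphic on a neighborhood of $0$ in $\C^n$ and vanishes on $\R^n$; all Taylor coefficients at $0$ are determined by restriction to $\R^n$ and hence vanish, so $f \circ \tilde\phi \equiv 0$ near $0$ and consequently $f$ vanishes on a Euclidean neighborhood of $x$ in $\cX$.

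Finally, since $\cX$ is irreducible and the smooth locus is Zariski dense in $\cX$, a regular function vanishing on a nonempty Euclidean open subset of the smooth locus must vanish identically. Hence the vanishing ideal of $\cX_J$ in $\C[\cX]$ is zero, proving Zariski density.

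The main potential obstacle is the analytic step establishing $\cX_J$ as a totally real submanifold and extending the parameterization holomorphically; both amount to standard applications of the (real and complex) implicit/inverse function theorems, once the tangent-space calculation $(\Theta_{\cX,x})_{J_x}$ is in hand. Everything else is formal manipulation with irreducibility and the identity principle.
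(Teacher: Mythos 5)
Your strategy is genuinely different from the paper's, and in outline it works. The paper's proof is a three‑line algebraic argument with Zariski tangent spaces: if $\cX_J\subseteq\cX'$ for a complex subvariety $\cX'\subseteq\cX$, then $\Theta_{\cX_J,x}+i\Theta_{\cX_J,x}\subseteq\Theta_{\cX',x}$, and since $\Theta_{\cX_J,x}=(\Theta_{\cX,x})_{J_x}$ has real dimension $n=\dim_\C\cX$, this forces $\Theta_{\cX',x}=\Theta_{\cX,x}$ and then $\cX'=\cX$ by irreducibility; no analysis is used. You instead exhibit $\cX_J$ near $x$ as a maximal totally real real‑analytic submanifold and kill any regular function vanishing on it via complexification and the identity principle. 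Your route is longer but buys robustness: it never has to discuss tangent spaces of the auxiliary subvariety at a point where that subvariety might be singular (a point the paper's final implication leans on implicitly), and your tangent‑space computation, the complexified parameterization being a local biholomorphism, and the concluding density argument on the irreducible variety are all correct.

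There is, however, one genuine gap: the justification that $\cX_J$ is, near $x$, a real‑analytic submanifold of real dimension $n$. Knowing that $d(J-\id)_x$ has real rank $n$ at the single point $x$ does not allow an application of the implicit function theorem to $J-\id$: the rank of $d(J-\id)$ need not be locally constant (it equals $n$ along the fixed locus but can jump to $2n$ off it, since away from fixed points $dJ_y$ is no longer an involution of a fixed tangent space), so neither the submersion form of the theorem nor the constant‑rank theorem applies; and the rank‑at‑a‑point argument, after projecting onto the image of $d(J-\id)_x$, only shows that $\cX_J$ is locally \emph{contained} in an $n$‑dimensional submanifold, which is the wrong inclusion for your purposes — you need $\cX_J$ to \emph{contain} an $n$‑dimensional totally real piece on which $f$ vanishes. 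The statement you want is true and standard (fixed‑point sets of antiholomorphic involutions are totally real of maximal dimension), but the standard proof uses the involution structure: pick local holomorphic coordinates $z$ centered at $x$ in which $d\sigma_0$ is the standard conjugation (possible by taking a real basis of $(\Theta_{\cX,x})_{J_x}$ as complex basis, where $\sigma$ denotes $J$ in coordinates), and set $w=\tfrac12\bigl(z+\overline{\sigma(z)}\bigr)$; this is holomorphic with $dw_0=\id$, and $w\circ\sigma=\overline{w}$, so in the $w$‑chart $J$ is literally $w\mapsto\bar w$ and $\cX_J$ is literally a neighborhood of $0$ in $\R^n$. Inserting this closes the gap and in fact simplifies your proof: in the $w$‑chart a regular $f$ vanishing on $\cX_J$ is a holomorphic function vanishing on $\R^n$, so it vanishes on a Euclidean neighborhood of $x$ without any separate complexification of a parameterization, and your final Zariski‑density step finishes the argument as you wrote it.
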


\begin{proof}
Let $\dim_\C \cX=N$. Since $x$ is nonsingular, we have
$$N=\dim_\C\Theta_{\cX,x}=\dim_\R(\Theta_{\cX,x})_{J_x}=\dim_\R\Theta_{\cX_J,x}.$$
If $\cX_J\subseteq \cX'$ for some complex subvariety $\cX'\subseteq \cX$, then $\Theta_{\cX_J,x}+i\Theta_{\cX_J,x}\subseteq \Theta_{\cX',x}$, so $\dim_\C \Theta_{\cX',x}=N$ and therefore $\cX'=\cX$ by irreducibility. Hence $\cX_J$ is Zariski dense in $\cX$.
\end{proof}

For later use we introduce
\beq\label{eq:Xgn}
\cX(g,n)=\left\{(\ua,\ub)\in M_n(\C)^g\times M_n(\C)^g
\colon \sum_{k=1}^g A_k B_k=I_n\right\}
\eeq
for arbitrary $g,n\in\N$. This is a $\C$-variety with real structure 
$J(\ua,\ub)=(\ub^*,\ua^*)$ and
\[
\cX(g,n)_J=\left\{(\ua,\ua^*)\in M_n(\C)^g\times M_n(\C)^g
\colon \sum_{k=1}^g A_k A_k^*=I_n\right\}.
\]

\begin{prop}\label{p:var}
The variety $\cX(g,n)$ is nonsingular and irreducible. Therefore $\cX(g,n)_J$ is Zariski dense in $\cX(g,n)$.
\end{prop}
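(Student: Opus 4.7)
My strategy has three steps.

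First, I would establish smoothness of $\cX(g,n)$ via the Jacobian criterion. The variety is cut out in $M_n(\C)^{2g}$ by the $n^2$ entries of the polynomial map $\Phi(\ua,\ub) := \sum_k A_k B_k - I_n$, whose differential at $(\ua,\ub)$ sends $(H,K)$ to $\sum_k (H_k B_k + A_k K_k)$. Setting $H = 0$ and $K_k = B_k M$, the defining relation yields $\sum_k A_k K_k = (\sum_k A_k B_k)M = M$ for any $M \in M_n(\C)$; so the differential is surjective at every point of $\cX(g,n)$, and the variety is smooth of pure dimension $(2g-1)n^2$. In particular, its irreducible components are pairwise disjoint and equidimensional.

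Second, I would single out a distinguished dense open subset. Let $V_1 := \{(\ua,\ub) \in \cX(g,n) : A_1 \in \GL_n(\C)\}$; solving the defining equation for $B_1$ as $B_1 = A_1^{-1}(I_n - \sum_{k \geq 2} A_k B_k)$ realizes $V_1$ algebraically as $\GL_n(\C) \times M_n(\C)^{2(g-1)}$, with coordinates $\ua$ and $(B_2, \ldots, B_g)$. Hence $V_1$ is irreducible of dimension $(2g-1)n^2$, so its closure $\overline{V_1}$ in $\cX(g,n)$ is an irreducible component of full dimension. To rule out other components, suppose $X_2$ is a component disjoint from $\overline{V_1}$; since the fibers of the projection $\pi : \cX(g,n) \to M_n(\C)^g$, $(\ua,\ub) \mapsto \ua$, are affine spaces (hence connected), each fiber lies entirely in one component of $\cX(g,n)$. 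Consequently $\pi(X_2)$ must be disjoint from $\pi(V_1) = \{A_1 \in \GL_n(\C)\}$, so $\pi(X_2) \subseteq \{\det A_1 = 0\}$, a hypersurface of dimension $gn^2 - 1$. Combined with the uniform fiber dimension $(g-1)n^2$, this forces $\dim X_2 \leq (2g-1)n^2 - 1$, contradicting smoothness. Therefore $\cX(g,n) = \overline{V_1}$ is irreducible.

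Third, the Zariski density of $\cX(g,n)_J$ follows by applying Proposition \ref{p:real} to the $J$-fixed point $(\ua_0, \ua_0^*) \in \cX(g,n)_J$ with $A_{0,k} := \tfrac{1}{\sqrt{g}} I_n$, which satisfies $\sum_k A_{0,k} A_{0,k}^* = I_n$ and is nonsingular by step one. The main subtlety is the irreducibility argument; the nontrivial part is the dimension count that rules out a hypothetical extra component hidden over the locus $\{\det A_1 = 0\}$ of the $A_1$-projection, which is where smoothness (ensuring equidimensionality of components) is essential.
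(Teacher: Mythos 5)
Your proof is correct, and your nonsingularity step is essentially the paper's: you verify surjectivity of the differential directly (via $K_k=B_kM$), while the paper computes the rank of the Jacobian explicitly — same criterion, same conclusion. The irreducibility step, however, takes a genuinely different route. The paper observes that, by smoothness, distinct irreducible components cannot meet, so irreducibility reduces to connectedness in the Euclidean topology; this it obtains from connectedness of $\GL_{gn}(\C)$ together with a surjection $\cX(1,gn)\to\cX(g,n)$ (first block row of $C$ paired with first block column of $C^{-1}$). You instead argue purely algebro-geometrically: the chart $V_1=\{A_1\in\GL_n(\C)\}$ is isomorphic to $\GL_n(\C)\times M_n(\C)^{2(g-1)}$, hence irreducible of full dimension $(2g-1)n^2$, and any further component would have to project into the hypersurface $\{\det A_1=0\}$ (since the fibers of $(\ua,\ub)\mapsto\ua$ are affine subspaces, hence connected, hence contained in a single component), so the fiber-dimension count caps its dimension at $(2g-1)n^2-1$, contradicting the equidimensionality forced by smoothness. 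Both arguments are complete; the paper's is shorter but leans on the Euclidean topology and is thus tied to $\kk=\C$, whereas yours works over any algebraically closed field at the cost of some dimension bookkeeping — the one assertion you leave implicit, that every nonempty fiber of $\pi$ has dimension exactly $(g-1)n^2$, is true because $\sum_k A_kB_k=I_n$ forces the linear map $\ub\mapsto\sum_k A_kB_k$ to be onto $M_n(\C)$. Your explicit $J$-fixed smooth point $\tfrac{1}{\sqrt g}(I_n,\dots,I_n)$ also makes explicit the nonemptiness hypothesis the paper leaves tacit when invoking Proposition \ref{p:real}.
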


\begin{proof}
Let
$$p=\sum_{k=1}^g X_k Y_k-I_n,$$
where $X_k$ and $Y_k$ are generic $n\times n$ matrices. The entries of $p$ are the defining equations for $\cX(g,n)$ and
\begin{equation}\label{e:partial}
\frac{\partial p_{\imath\jmath}}{\partial x_{ij}^{(k)}}=\left\{
\begin{array}{ll}
y_{j\jmath}^{(k)} & \text{if } \imath=i \\
0 & \text{otherwise,}
\end{array}\right. \qquad
\frac{\partial p_{\imath\jmath}}{\partial y_{ij}^{(k)}}=\left\{
\begin{array}{ll}
x_{\imath i}^{(k)} & \text{if } \jmath=j \\
0 & \text{otherwise.}
\end{array}\right.
\end{equation}
Let $\Jac$ be the Jacobian matrix corresponding to $p$, i.e. 
the $n^2\times 2gn^2$ matrix
$$\Jac=\begin{pmatrix}
\frac{\partial p_{\imath\jmath}}{\partial x_{ij}^{(k)}} & 
\frac{\partial p_{\imath\jmath}}{\partial y_{ij}^{(k)}}
\end{pmatrix}_{\imath,\jmath;i,j,k}.$$
By \eqref{e:partial}, one can observe that every column of $\Jac$ is of the form $\uu \otimes e_\ell$ or $e_\ell\otimes \vv^t$, where $\uu$ is a column of $X_k$, $\vv$ is a row of $Y_k$, and $e_\ell\in\C^{n\times 1}$ is the $\ell$-th standard unit vector. Therefore
$$\rank \Jac(\ua,\ub)=
n\dim_\C \big(\span\{\text{columns of }\ua, \text{ columns of }\ub^t\}\big)$$
for every $(\ua,\ub)\in M_n(\C)^g\times M_n(\C)^g$. If $(\ua,\ub)\in\cX(g,n)$, then the columns of $\ua$ are linearly independent, so $\Jac(\ua,\ub)$ has full rank. Therefore $\cX(g,n)$ is nonsingular.

For any variety, the intersections of its irreducible components are subsets of the singular locus; thus $\cX(g,n)$ is irreducible if it is connected in Euclidean topology. Since $\GL_n(\C)$ is connected, the same holds for $\cX(1,n)=\{(A,A^{-1})\colon A\in \GL_n(\C)\}$. For arbitrary $g$, there is a surjective projection $\cX(1,gn)\to \cX(g,n)$, so $\cX(g,n)$ is connected.

The last part of the statement is a consequence of Proposition \ref{p:real}.
\end{proof}

\subsection{$*$-Nullstellens\"atze}\label{subs:starnull}

In this subsection we give Nullstellens\"atze for nc trigonometric polynomials, nc spherical polynomials, and nc unitary groups. These are obtained by combining the results of Section \ref{sec2} and Subsection \ref{subs:real}. 
Alternative proofs of these $*$-Nullstellens\"atze (without size bounds) 
with a functional-analytic flavor are presented in Appendix \ref{sec:rfd} below.

Let $\cX(g,n)$ be as in Subsection \ref{subs:real}.

\begin{defn} A $*$-ideal $\cI\subset\C\axs$ satisfies the {\bf $*$-Nullstellensatz property} if
for each $f\in\C\ax$,
$$f\in\cI \quad\Leftrightarrow\quad f|_{Z_*(\cI)}=0.$$
\end{defn}

\begin{thm}\label{thm:null1}
Suppose $p\in\C\axs$ is of degree $u$ and has $v$ terms. If $p$ vanishes on all $g$-tuples of unitaries of size $uv$, then
$$p\in\cT=(1-X_1^{\ss}X_1,\, 1-X_1X_1^{\ss},\,\ldots,\, 1-X_g^{\ss}X_g,\,1-X_gX_g^{\ss}).$$
\end{thm}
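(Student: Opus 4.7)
The strategy is to reduce to the involution-free Nullstellensatz of Section \ref{sec2} for the ideal $\cT'=(1-X_1Y_1,\,1-Y_1X_1,\ldots,1-X_gY_g,\,1-Y_gX_g)\subset \C\fr{\x,\y}$, combined with a Zariski density argument that lets us upgrade vanishing on unitaries to vanishing on the full zero set of $\cT'$. To this end, let $\tilde p\in\C\fr{\x,\y}$ be the polynomial obtained from $p$ by replacing each occurrence of $X_j^\ss$ with the free variable $Y_j$; then $\tilde p$ has degree $u$ and $v$ terms, and the $\ss$-homomorphism $\C\fr{\x,\y}\to\C\axs\!/\cT$ sending $Y_j\mapsto [X_j^\ss]$ is surjective with kernel containing $\cT'$, so it suffices to prove $\tilde p\in\cT'$.

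The next step identifies the geometry of $Z(\cT')\cap M_n(\C)^{2g}$. This variety consists of pairs of tuples $(\ua,\ub)$ with $A_jB_j=B_jA_j=I_n$ for all $j$, which via $(\ua,\ub)\mapsto \ua$ is isomorphic to $\GL_n(\C)^g$, an irreducible, smooth complex variety. Equip it with the real structure $J(\ua,\ub)=(\ub^*,\ua^*)$; the $J$-fixed locus is precisely $\{(\uU,\uU^*):U_j\in U(n)\}$, i.e., $g$-tuples of unitary matrices of size $n$. Since $(I_n,\ldots,I_n)$ is a nonsingular fixed point, Proposition \ref{p:real} applies and shows that the set of unitary $g$-tuples of size $n$ is Zariski dense in $Z(\cT')\cap M_n(\C)^{2g}$ for every $n$.

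Now combine the ingredients. The hypothesis says $\tilde p$ vanishes on all unitary $g$-tuples of size $uv$; since $\tilde p$ defines a regular function on $M_{uv}(\C)^{2g}$, Zariski density implies it vanishes on the whole variety $Z(\cT')\cap M_{uv}(\C)^{2g}$. By Example \ref{exfrr}(1) the ideal $\cT'$ is frr and $\C\fr{\x,\y}\!/\cT'$ is a fir, hence Theorem \ref{thm1}(a) and Proposition \ref{prop2}(a) give that $\cT'$ is grr with the Nullstellensatz property, and Example \ref{exa:bounds}(1) provides the size bound $N=uv$ in Theorem \ref{t:rrbounds}. Applying that bound to $\tilde p$ yields $\tilde p|_{Z(\cT')}=0$, and the Nullstellensatz property then gives $\tilde p\in\cT'$. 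Mapping through $Y_j\mapsto X_j^\ss$ concludes $p\in\cT$.

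The one step that is not purely formal is the Zariski density of unitaries in $\GL_n(\C)^g$; however, this is exactly the content of the real structure formalism in Subsection \ref{subs:real}, so the work has already been done. The rest is bookkeeping: packaging the $\ss$-data as ordinary noncommutative data in $\C\fr{\x,\y}$, invoking the already-established Nullstellensatz for $\cT'$ together with its explicit size bound, and transporting the conclusion back through the canonical $\ss$-homomorphism $Y_j\mapsto X_j^\ss$.
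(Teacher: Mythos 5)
Your proposal is correct and follows essentially the same route as the paper: pass to the involution-free ideal $\cT'$, use the real-structure density argument of Subsection \ref{subs:real} (the paper invokes Proposition \ref{p:var} for $\cX(1,n)$ and products, you invoke Proposition \ref{p:real} directly on $\GL_n(\C)^g$, which is the same mechanism) to upgrade vanishing on unitaries to vanishing on $Z(\cT')\cap M_{uv}(\C)^{2g}$, and then apply the Nullstellensatz for $\cT'$ from Example \ref{exfrr}(1) with the bound $N=uv$ from Example \ref{exa:bounds}(1). Your extra bookkeeping with the substitution $X_j^\ss\mapsto Y_j$ just makes explicit what the paper phrases as viewing $\cT$ ``as an ideal without involution.''
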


\begin{proof}
Let $n=uv$. By assumption, $p$ vanishes on $\cX(1,n)_J\times\cdots \times \cX(1,n)_J$ ($g$ factors), so it vanishes on $\cX(1,n)\times\cdots \times \cX(1,n)$ by Proposition \ref{p:var}. The latter variety is a subset of the zero set of $\cT$ as an ideal without involution. By Corollary \ref{c:exfrr1}, this ideal satisfies the Nullstellensatz property. Therefore $p$ vanishes on $Z(\cT)$ by Example \ref{exa:bounds}(1), so $p\in\cT$.
\end{proof}

\begin{cor}
A nc trigonometric polynomial that vanishes under all finite-dimensional
$*$-representations, equals $0$.
\end{cor}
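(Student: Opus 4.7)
The plan is to deduce this directly from Theorem~\ref{thm:null1}, using the fact that the finite-dimensional $*$-representations of $\C\axs\!/\cT$ are precisely the evaluations at tuples of unitaries.

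First I would unpack the statement: a nc trigonometric polynomial is an element $\bar p \in \C\axs\!/\cT$, and we choose a representative $p \in \C\axs$. Saying that $\bar p$ vanishes under all finite-dimensional $*$-representations of $\C\axs\!/\cT$ means that for every $n \in \N$ and every $g$-tuple $U=(U_1,\ldots,U_g)$ of unitary matrices in $M_n(\C)$, the evaluation $p(U,U^*)$ equals $0$. This interpretation uses the standard fact that, by the universal property of $\C\axs\!/\cT$ as the group algebra of the free group on $g$ generators, finite-dimensional $*$-representations correspond bijectively to $g$-tuples of unitaries.

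Next, I would invoke Theorem~\ref{thm:null1} applied to $p$. Let $u$ be the degree of $p$ and $v$ its number of terms. Since $p$ vanishes on \emph{every} $g$-tuple of unitaries, in particular it vanishes on the $g$-tuples of unitaries of size $n = uv$. Theorem~\ref{thm:null1} then gives $p \in \cT$, i.e.\ $\bar p = 0$ in $\C\axs\!/\cT$.

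There is no genuine obstacle here: the only point worth mentioning is the identification of $*$-representations with unitary tuples, and after that the corollary is an immediate specialization of the quantitative Nullstellensatz. The whole argument fits in two or three lines.
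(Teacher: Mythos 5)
Your argument is correct and is exactly the intended one: the paper states this corollary without proof precisely because it is the immediate specialization of Theorem~\ref{thm:null1}, using the standard identification of finite-dimensional $*$-representations of $\C\axs\!/\cT$ (the group algebra of the free group) with $g$-tuples of unitaries. Nothing is missing.
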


\begin{thm}\label{thm:null2}
Suppose $p\in\C\axs$ is of degree $u$ and has $v$ terms. Let $g>1$. If $p$ vanishes on all $g$-tuples of spherical isometries of size $\lceil\tfrac{g+1}{2}uv\rceil$, then
$$p\in\cS=(1-X_1^{\ss}X_1- \cdots-X_g^{\ss}X_g).$$
\end{thm}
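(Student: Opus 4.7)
The plan is to mimic the proof of Theorem \ref{thm:null1}: convert the $*$-ideal question to an involution-free ideal problem via a substitution, use Zariski density of the real locus as in Subsection \ref{subs:real}, and then invoke the explicit bound from Example \ref{exa:bounds}(2).

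First I would set $n=\lceil\tfrac{g+1}{2}uv\rceil$ and introduce the $\C$-algebra isomorphism $\phi\colon \C\axs\to\C\axy$ determined by $X_k\mapsto X_k$ and $X_k^{\ss}\mapsto Y_k$. Under $\phi$, $\cS$ corresponds to $\cJ:=(1-\sum_k Y_kX_k)\subset\C\axy$. After the harmless relabeling $X_k\leftrightarrow Y_k$, $\cJ$ is precisely the ideal $\cS'$ treated in Example \ref{exfrr}(2); in particular $\cJ$ is grr, satisfies the Nullstellensatz property, and admits the size bound $N=\lceil\tfrac{g+1}{2}uv\rceil=n$ from Example \ref{exa:bounds}(2).

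Next I would introduce the complex affine variety
\[
\cY\ :=\ Z(\cJ)\cap M_n(\C)^{2g}\ =\ \bigl\{(\ua,\ub)\in M_n(\C)^{2g}\colon \textstyle\sum_k B_kA_k=I_n\bigr\}
\]
with the real structure $J(\ua,\ub)=(\ub^*,\ua^*)$, whose fixed locus $\cY_J$ is exactly the set of pairs $(\ua,\ua^*)$ with $\ua$ a spherical isometry of size $n$. Transcribing the Jacobian argument of Proposition \ref{p:var} with the roles of $\ua$ and $\ub$ interchanged --- the columns of $\ub$ are forced to be linearly independent on $\cY$ by the defining relation --- shows that $\cY$ is nonsingular and irreducible. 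Proposition \ref{p:real} then gives that $\cY_J$ is Zariski dense in $\cY$.

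Finally, the hypothesis says that $\tilde p:=\phi(p)$ vanishes on $\cY_J$, hence on all of $\cY$ by density. Applying Theorem \ref{t:rrbounds} to $\cJ$, with the explicit bound $N=n$ from Example \ref{exa:bounds}(2), forces $\tilde p\in\cJ$. Pulling back via $\phi^{-1}$ then yields $p\in\cS$, as required. The only nontrivial verification is the nonsingularity and irreducibility of $\cY$, but this is an essentially verbatim transcription of Proposition \ref{p:var} --- the heavy lifting has all been done in Section \ref{sec3} and Subsection \ref{subs:real}.
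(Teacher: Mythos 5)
Your proposal is correct and follows essentially the same route as the paper: the paper's proof sets $n=\lceil\tfrac{g+1}{2}uv\rceil$, notes that vanishing on $\cX(g,n)_J$ gives vanishing on $\cX(g,n)$ by Proposition \ref{p:var}, and then combines the Nullstellensatz property of $\cS$ from Example \ref{exfrr}(2) with the bound of Example \ref{exa:bounds}(2). Your only deviations are cosmetic --- making the relabeling isomorphism explicit and working with the transposed variety $\cY$ (isomorphic to $\cX(g,n)$ by swapping the two tuples) instead of citing $\cX(g,n)$ directly --- so the argument is the same.
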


\begin{proof}
Let $n=\lceil\tfrac{g+1}{2}uv\rceil$. Since $p$ vanishes on $\cX(g,n)_J$, it vanishes on $\cX(g,n)$ by Proposition \ref{p:var}. The ideal $\cS$ satisfies the Nullstellensatz property by Corollary \ref{c:exfrr2}. Hence $p\in\cS$ because $p$ vanishes on the zero set of $\cS$ by Example \ref{exa:bounds}(2).
\end{proof}

\begin{cor}
Let $g>1$. A nc spherical polynomial that vanishes under all finite-dimensional
$*$-representations, equals $0$.
\end{cor}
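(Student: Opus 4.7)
The plan is to deduce this corollary as an essentially immediate consequence of Theorem \ref{thm:null2}, which has already done all the real work. A nc spherical polynomial is, by definition, an element of the quotient algebra $\C\axs\!/\cS$, so it is represented by some $p\in\C\axs$. The hypothesis that a nc spherical polynomial $[p]$ vanishes under every finite-dimensional $*$-representation of $\C\axs\!/\cS$ is precisely the statement that for every $n\in\N$ and every $g$-tuple $\ua\in M_n(\C)^g$ satisfying $\sum_j A_j^*A_j=I_n$ (i.e.\ every spherical isometry), we have $p(\ua,\ua^*)=0$; in other words, $p$ vanishes on $Z_*(\cS)$.

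I would then set $u=\deg p$ and $v$ equal to the number of terms of $p$. Since $p$ vanishes on \emph{all} tuples of spherical isometries of all sizes, it certainly vanishes on those of size $\lceil\tfrac{g+1}{2}uv\rceil$. Theorem \ref{thm:null2} then applies verbatim and yields $p\in\cS$, which says exactly that $[p]=0$ in $\C\axs\!/\cS$.

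There is essentially no obstacle beyond a careful unwinding of definitions: one only has to observe that ``vanishes under all finite-dimensional $*$-representations of $\C\axs\!/\cS$'' is the same condition as ``vanishes on $Z_*(\cS)$'', since every $*$-representation of $\C\axs$ that factors through $\cS$ is determined by the images of the $X_j$, which form a tuple of spherical isometries, and conversely every such tuple gives rise to a $*$-representation of $\C\axs\!/\cS$. Once this identification is made, the corollary is a direct quantitative-to-qualitative specialization of Theorem \ref{thm:null2}.
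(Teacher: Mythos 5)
Your proposal is correct and is exactly the argument the paper intends (the corollary is stated without proof because it is an immediate specialization of Theorem \ref{thm:null2}): identifying finite-dimensional $*$-representations of $\C\axs\!/\cS$ with tuples of spherical isometries, the hypothesis gives vanishing at the size $\lceil\tfrac{g+1}{2}uv\rceil$ required by the theorem, hence $p\in\cS$ and $[p]=0$. No gaps.
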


\begin{thm}\label{thm:null3}
Suppose $p\in\C\axs$ is of degree $u$ and has $v$ terms. Let $g>1$. If $p$ vanishes on all $g$-partitioned unitaries with blocks of size $\lceil\tfrac{g}{2}uv\rceil$, then
$$p\in\cU=(I_g-XX^{\ss}, I_g-X^{\ss}X).$$
\end{thm}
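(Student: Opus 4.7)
The plan is to follow the template set by the proofs of Theorems \ref{thm:null1} and \ref{thm:null2}: transport the hypothesis from the real $*$-evaluation side to the complex involution-free setting via Zariski density, and then apply the Nullstellensatz property and size bound already established for $\cU$ viewed as an ordinary (two-sided) ideal.

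Concretely, I would introduce the complex variety
\[
\cY(g,n)=\{(X,Y)\in M_n(\C)^{g\times g}\times M_n(\C)^{g\times g}\mid XY=I_g,\ YX=I_g\},
\]
where products are interpreted block-wise (so that $X,Y$ are treated as elements of $M_{gn}(\C)$ partitioned into $n\times n$ blocks), equipped with the real structure $J(X,Y)=(Y^*,X^*)$. The $J$-fixed locus $\cY(g,n)_J$ consists of pairs $(X,X^*)$ with $X$ a $g$-partitioned unitary with blocks of size $n$. Since either of the two equations forces $X\in\GL_{gn}(\C)$ and uniquely determines $Y=X^{-1}$, the projection $(X,Y)\mapsto X$ identifies $\cY(g,n)$ with $\GL_{gn}(\C)$; in particular, $\cY(g,n)$ is nonsingular and irreducible. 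An explicit unitary (for instance, $(I_{gn},I_{gn})$) lies in $\cY(g,n)_J$, so Proposition \ref{p:real} applies and $\cY(g,n)_J$ is Zariski dense in $\cY(g,n)$.

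Set $n=\lceil\tfrac{g}{2}uv\rceil$. By hypothesis, $p(X,X^*)=0$ for every $(X,X^*)\in\cY(g,n)_J$; viewing $p\in\C\axs$ as a polynomial in the $2g^2$ noncommuting letters $X_{ij},X_{ij}^\ss$ and substituting $X_{ij}^\ss\mapsto Y_{ij}$, this says that $p$ (as a polynomial in $\C\langle X_{ij},Y_{ij}\rangle$) vanishes on $\cY(g,n)_J$. Zariski density then forces vanishing on all of $\cY(g,n)$, which is by construction contained in $Z(\cU)\cap M_n^{2g^2}$, where $\cU$ is now regarded as the involution-free two-sided ideal considered in Example \ref{exfrr}(1) with $n=1$, $g_1=g$.

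Now $\cU$ is grr and satisfies the Nullstellensatz property by Example \ref{exfrr}(1), and by Example \ref{exa:bounds}(4) (the $g>1$ case) a polynomial of degree $u$ with $v$ terms that vanishes on $Z(\cU)\cap M_N(\C)^{2g^2}$ for $N=\lceil\tfrac{g}{2}uv\rceil$ automatically vanishes on all of $Z(\cU)$. Combining these, $p$ is in $\cU$ as an involution-free ideal, which is the same element as $\cU$ viewed as a $*$-ideal, completing the proof. The only genuinely new step beyond the previous two theorems is the verification that $\cY(g,n)$ is nonsingular and irreducible; this is the main technical item, but it reduces immediately to the well-known geometry of $\GL_{gn}(\C)$, so no serious obstacle is expected.
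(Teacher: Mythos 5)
Your proposal is correct and follows essentially the same route as the paper: your variety $\cY(g,n)$ is exactly the paper's $\cX(1,gn)$ under the block identification, so you identify $g$-partitioned unitaries with its $J$-fixed locus, pass to the full complex variety by Zariski density, and then invoke the involution-free Nullstellensatz for $\cU$ from Example \ref{exfrr}(1) together with the bound $N=\lceil\tfrac{g}{2}uv\rceil$ from Example \ref{exa:bounds}(4). The only cosmetic difference is that you re-derive nonsingularity, irreducibility and density directly from the isomorphism with $\GL_{gn}(\C)$ and Proposition \ref{p:real}, whereas the paper simply cites Proposition \ref{p:var} applied to $\cX(1,gn)$.
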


\begin{proof}
The variety of $g$-partitioned unitaries with blocks of size $n$ can be naturally identified with $\cX(1,gn)_J$. On the other hand, $\cX(1,gn)$ can be in the same way identified with the zero set of $\cU$. The latter is a special case of an ideal from Corollary \ref{c:exfrr1}. Therefore $p\in\cU$ by the Nullstellensatz property and Example \ref{exa:bounds}(4).
\end{proof}

\begin{rem}\label{r:general}
In a similar fashion, one can derive the $*$-Nullstellens\"atze for any $*$-ideal as in Corollary \ref{c:exfrr1} by considering the appropriate products of varieties $\cX(g,n)$ for various values of $g$ and $n$. Furthermore, as in Theorems \ref{thm:null1} and \ref{thm:null2}, bounds for membership testing can be established using Theorem \ref{t:rrbounds}.
\end{rem}

\begin{rem}\label{r:realcase}
Similar $*$-Nullstellens\"atze also hold in the real setting, i.e., when we consider  evaluations of polynomials $p\in\R\axs$ at points $\ua\in M_n(\R)^g$ by replacing $X_i$ by $A_i$ and $X_i^\ss$ by $A_i^t$, where $t$ denotes the transposition. Using the natural $*$-embedding
$$M_n(\C)\hookrightarrow M_{2n}(\R),\quad A+iB\mapsto \begin{pmatrix}A&-B\\B&A\end{pmatrix}$$
we see that if $p\in \R\axs$ vanishes on
$$\left\{(\ua,\ua^t)\in M_{2n}(\R)^g\times M_{2n}(\R)^g
\colon \sum_{k=1}^g A_k A_k^t=I_n\right\} \hookleftarrow \cX(g,n)_J,$$
then it vanishes on
$$\left\{(\ua,\ub)\in M_n(\R)^g\times M_n(\R)^g
\colon \sum_{k=1}^g A_k B_k=I_n\right\}\subset \cX(g,n).$$
At this point we can apply results from Sections \ref{sec2} and \ref{sec3} for $\kk=\R$. Thus we obtain the $*$-Nullstellens\"atze for real versions of Theorems \ref{thm:null1}, \ref{thm:null2} and \ref{thm:null3}, but with size bounds multiplied by 2.
\end{rem}

\subsection{Positivstellens\"atze}\label{subsec:pos}

Let $\cZ$ be a set of $g$-tuples of matrices. Then a polynomial $f\in\C\axs$ is said to be \textbf{positive on $\cZ$} if $f(\ua)$ is positive semi-definite for every $\ua\in\cZ$. Obvious representatives of such polynomials are those of the form
\begin{equation}\label{e:psd}
\sum_i p_i^{\ss} p_i+q,
\end{equation}
where $q$ vanishes on $\cZ$. We are interested in sets $\cZ$ for which the converse of this observation holds; that is, is every $f$ positive semi-definite on $\cZ$, of the form \eqref{e:psd}? Such statements are traditionally called Positivstellens\"atze \cite{BCR, Ma, Sch, HM, HMP}. The basic case, where $\cZ$ is the set of all $g$-tuples of matrices of all sizes, was established by Helton \cite{Hel}; see also \cite{McC}. Later on, a Positivstellensatz was also established for spherical isometries and tuples of unitaries
in \cite{HMP1}. In the case of unitaries we also refer to \cite{NT} for a different approach.
Using Theorems \ref{thm:null1} and \ref{thm:null2}, we can now generalize these results in two directions.
First of all, 
we clearly identify the elements of the vanishing ideals, and 
we prove a Positivstellensatz for nc unitary groups.

For $d\in \N$, let $\cP_d\subset \C\axs$ be the subspace of polynomials of degree at most $d$ and
\[\cC_{2d}=\co\{p^{\ss} p\colon p\in\cP_d\}\]
the associated convex cone of  sums of Hermitian squares. 

\begin{cor}\label{c:pos1}
If $f\in\C\axs$ of degree $d-1$ is positive on all $g$-tuples of unitaries of size $(2g+1)^d$, then
$$f\in   
\cC_{2d}
+(1-X_1^{\ss}X_1,\, 1-X_1X_1^{\ss},\,\ldots,\, 1-X_g^{\ss}X_g,\,1-X_gX_g^{\ss}).$$
\end{cor}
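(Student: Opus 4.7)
The plan is to combine a Hahn--Banach separation argument with a GNS-type construction and a unitary dilation, using Theorem \ref{thm:null1} to control the size of the resulting representation. First I perform a finite-dimensional reduction: if $f=\sum_i p_i^\ss p_i+q$ with $p_i\in\cP_d$ and $q\in\cT$, then since $\deg f\le d-1$ and $\deg\sum p_i^\ss p_i\le 2d$, we automatically have $\deg q\le 2d$. Hence
$$f\in\cC_{2d}+\cT\quad\Longleftrightarrow\quad f\in\cC_{2d}+\cT_{2d},\qquad\cT_{2d}:=\cT\cap\cP_{2d},$$
and the latter cone lives in the finite-dimensional space $\cP_{2d}$. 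A closedness check is needed: given $f_n=\sum_i p_{n,i}^\ss p_{n,i}+q_n\to f$, evaluate at a point of $Z_*(\cT)\cap M_N(\C)^g$ with $N$ large enough that, by Theorem \ref{thm:null1}, evaluation on $\cP_d/\cT_d$ is injective; this bounds the $p_{n,i}$ modulo $\cT$ and lets us extract convergent subsequences.

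Next, assume for contradiction $f\notin\cC_{2d}+\cT_{2d}$ and apply Hahn--Banach: there is an $\R$-linear functional $L:\cP_{2d}\to\C$ with $L(\cC_{2d})\subset\R_{\ge0}$, $L|_{\cT_{2d}}=0$, $L(f)<0$, and (after symmetrization) $L(p^\ss)=\overline{L(p)}$. The sesquilinear form $\langle p,q\rangle:=L(q^\ss p)$ on $\cP_d$ is positive semidefinite; modding out its kernel $N$ yields a Hilbert space $H:=\cP_d/N$ of dimension at most
$$\dim\cP_d=\sum_{k=0}^d(2g)^k\le(2g+1)^d,$$
the last inequality being straightforward by induction on $d$. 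Writing $H_{d-1}\subset H$ for the image of $\cP_{d-1}$, left multiplication by $X_i$ defines an operator $\hat X_i:H_{d-1}\to H$ which is an isometry because $X_i^\ss X_i-1\in\cT_{2d}$ and $L|_{\cT_{2d}}=0$ give $\langle[X_ip],[X_iq]\rangle=L(q^\ss X_i^\ss X_ip)=L(q^\ss p)=\langle[p],[q]\rangle$ for $p,q\in\cP_{d-1}$. Similarly, $[p]\mapsto[X_i^\ss p]$ agrees with the adjoint of $\hat X_i$ on $H_{d-1}$.

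Because $H$ is finite-dimensional, each partial isometry $\hat X_i:H_{d-1}\to H$ extends to a unitary $U_i$ on $H$ itself by choosing an arbitrary unitary identification of $H\ominus H_{d-1}$ with $H\ominus\hat X_i(H_{d-1})$. The crucial fact is that for any monomial $m$ of degree $\le d-1$ the iterated action $m(U,U^\ss)[1]=[m]$ holds, because every partial product of $m$ has degree $\le d-1$ and therefore lies in $H_{d-1}$, the region on which $U_i$ and $U_i^\ss$ coincide with the honest left-multiplication maps (the ``fictitious'' extension on $H\ominus H_{d-1}$ is never invoked). Extending linearly, for the given $f$ of degree $\le d-1$,
$$L(f)=\langle f(U,U^\ss)[1],[1]\rangle.$$
Now $U=(U_1,\ldots,U_g)$ is a tuple of unitaries of size $\dim H\le(2g+1)^d$, so by hypothesis $f(U,U^\ss)\succeq 0$, forcing $L(f)\ge0$, a contradiction.

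The main obstacle is verifying the cleanness of the dilation step combined with the identity $m(U,U^\ss)[1]=[m]$: one must argue that although $U_i$ acts arbitrarily on the defect subspace, every ``test'' computation $f(U,U^\ss)[1]$ only ever probes the partial-isometry portion and therefore faithfully reproduces the GNS data. A secondary technical point is the closedness of $\cC_{2d}+\cT_{2d}$, which is where Theorem \ref{thm:null1} is actually used (to ensure evaluation at a single large-enough unitary tuple separates the degree-$d$ polynomials modulo $\cT$ and hence yields the uniform bound needed to extract limits from SOS decompositions).
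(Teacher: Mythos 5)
Your overall strategy is sound and is genuinely different from the paper's proof of this corollary: the paper simply quotes the Positivstellensatz of Helton--McCullough--Putinar \cite[Theorem 4.1]{HMP1} to write $f$ as a sum of Hermitian squares plus a polynomial vanishing on unitaries, and then invokes Theorem \ref{thm:null1} to place that polynomial in $\cT$; the separation/GNS/dilation machinery you redevelop is only deployed in the paper for the nc unitary group case (Theorem \ref{thm:pos1}). However, your argument has a genuine gap exactly at the step you flag as the crucial one. The duality computation $\langle [X_ip],[q]\rangle = \langle [p],[X_i^\ss q]\rangle$ for $p,q\in\cP_{d-1}$ only shows that the \emph{compression} of $U_i^*$ to $H_{d-1}$ agrees with the compression of multiplication by $X_i^\ss$; it does not show that $U_i^*$ itself acts on $H_{d-1}$ as multiplication by $X_i^\ss$, and for $p$ of degree exactly $d-1$ this can fail, since $[X_i^\ss p]$ may leave $H_{d-1}$ and $U_i$ is then governed by the arbitrary extension. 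Consequently the identity $m(U,U^\ss)[1]=[m]$ does not follow from the justification you give (``every partial product lies in $H_{d-1}$, where $U_i$ and $U_i^\ss$ are honest multiplications'').

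The gap is repairable, but the repair uses an ingredient your write-up never invokes, namely the relations $1-X_iX_i^\ss\in\cT$ (you only use $1-X_i^\ss X_i$). For $p\in\cP_{d-2}$ one has $p^\ss(X_iX_i^\ss-1)^\ss(X_iX_i^\ss-1)p\in\cT\cap\cP_{2d}$, so $L$ kills it and hence $[X_iX_i^\ss p]=[p]$ in $H$; since $X_i^\ss p\in\cP_{d-1}$, this gives $U_i[X_i^\ss p]=[X_iX_i^\ss p]=[p]$ and, by unitarity, $U_i^*[p]=[X_i^\ss p]$. This suffices: peeling off the leftmost letter of a word of length at most $d-1$, the remaining suffix has degree at most $d-2$, so the induction goes through and $m(U,U^\ss)[1]=[m]$ holds, yielding $L(f)=\langle f(U,U^\ss)[1],[1]\rangle\ge 0$, the desired contradiction. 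Two secondary points also need more care than your sketch provides: Theorem \ref{thm:null1} gives, for each nonzero class in $\cP_d/(\cT\cap\cP_d)$, \emph{some} unitary tuple where it does not vanish, so to get a single tuple with injective evaluation you must take a finite direct sum (a descending-chain argument in the finite-dimensional space), and in the closedness argument you must bound the number of squares per element of $\cC_{2d}$ (Carath\'eodory) before extracting convergent subsequences of representatives chosen in a fixed complement of $\cT\cap\cP_d$. With these insertions your proof is correct and self-contained, at the cost of reproving the \cite{HMP1} input the paper simply cites.
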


\begin{cor}\label{c:pos2}
If $f\in\C\axs$ of degree $d-1$ is positive on all spherical isometries of size $(2g+1)^d$, then
$$f\in   
\cC_{2d}+(1-X_1^{\ss}X_1- \cdots-X_g^{\ss}X_g).$$
\end{cor}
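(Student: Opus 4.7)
The plan is to follow the Hahn--Banach / GNS strategy of \cite{HMP1}, with our Nullstellensatz Theorem \ref{thm:null2} and its size bound (Theorem \ref{t:rrbounds}, Example \ref{exa:bounds}(2)) taking the place of the abstract vanishing ideal.

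\textbf{Step 1 (finite-dimensional cone).} Let $W$ denote the $\mathbb{R}$-vector space of symmetric elements of $\cP_{2d}$, let $V := \{q \in \cS \cap \cP_{2d} : q^\ss = q\} \subset W$, and consider the convex cone
$$
C := \cC_{2d} + V \subset W.
$$
Because $V$ is a linear subspace, $C$ is invariant under translation by $V$; so the target is $f \in C$, and we assume for contradiction that $f \notin C$.

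\textbf{Step 2 (closedness).} Show $C$ is closed in $W$. This is where the compactness of the set of spherical isometries enters: since $\sum X_j^\ss X_j \equiv 1$ modulo $\cS$, the element $g\cdot 1 - \sum X_j X_j^\ss$ lies in $V$ plus a sum of Hermitian squares, so the truncated quadratic module $C$ is archimedean, and a standard Carath\'eodory / dual-cone argument (cf.~\cite{HMP1}) yields closedness.

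\textbf{Step 3 (separation and GNS).} By Hahn--Banach choose $\mathbb{R}$-linear $L : W \to \mathbb{R}$ with $L|_C \geq 0$ and $L(f) < 0$; extend Hermitianly to $\cP_{2d}$. Since $\pm V \subset C$, $L$ vanishes on all symmetric elements of $\cS \cap \cP_{2d}$, hence on $\cS \cap \cP_{2d}$. Define a positive semidefinite form on $\cP_d$ by $\langle p,q\rangle := L(q^\ss p)$, quotient by the null space, and obtain a Hilbert space $H$ with
$$
\dim H \leq \dim \cP_d = \sum_{k=0}^{d}(2g)^k \leq (2g+1)^d.
$$

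\textbf{Step 4 (building spherical isometries and contradiction).} Define $T_j[p] := [X_j p]$ for $p \in \cP_{d-1}$. For such $p$ the element $p^\ss(1 - \sum_j X_j^\ss X_j) p$ lies in $\cS \cap \cP_{2d}$, so
$$
\langle p,p\rangle = L(p^\ss p) = \sum_j L(p^\ss X_j^\ss X_j p) = \sum_j \langle T_j[p], T_j[p]\rangle,
$$
which says the column $[T_1,\dots,T_g]^\ss$ is an isometry on the subspace $H_{d-1} \subset H$ spanned by classes of polynomials of degree $\le d-1$. Extend this isometry to an isometry on all of $H$ (standard completion of an isometric column, possibly enlarging $H$ by an orthogonal summand of dimension at most $\dim H$, which still keeps us within the bound $(2g+1)^d$ up to the factor accounted for in Example \ref{exa:bounds}(2)); the resulting tuple $T = (T_1,\dots,T_g)$ is a spherical isometry. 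Finally, since $\deg f \leq d-1$,
$$
\langle f(T)[1], [1]\rangle_H = L(f) < 0,
$$
contradicting positivity of $f$ on spherical isometries of size at most $(2g+1)^d$.

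\textbf{Main obstacle.} Steps 2 and 4 carry the technical weight: the closedness in Step 2 (archimedean truncation argument) and, more delicately, the extension of the partial column isometry in Step 4 to an honest spherical isometry on the full finite-dimensional Hilbert space while controlling the final dimension by $(2g+1)^d$. The role of Theorem \ref{thm:null2} in the background is to guarantee that the ``vanishing'' half of the Positivstellensatz is precisely the ideal $\cS$ (and not some larger radical-type object), so that the cone $C$ is the right one to separate against.
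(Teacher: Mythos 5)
The paper's own proof of Corollary \ref{c:pos2} is a two-line citation: by \cite[Theorem 4.1]{HMP1}, positivity on spherical isometries of the stated size already gives $f=\sum_i p_i^\ss p_i+q$ with $q$ vanishing on spherical isometries, and Theorem \ref{thm:null2} then identifies $q\in\cS$. You instead re-run the Hahn--Banach/GNS argument of \cite{HMP1} from scratch, which is essentially the route the paper reserves for the genuinely new case (Theorem \ref{thm:pos1}); the strategy is viable, but as written it has a real gap in Step 2. The closedness you need does not follow from the asserted archimedeanity of a \emph{truncated} quadratic module; the closedness result actually available (\cite[Lemma 3.2]{HMP1}, used verbatim in the proof of Theorem \ref{thm:pos1}) concerns the cone $\cC_{2d}(\cZ)$ taken modulo the \emph{full vanishing ideal} $I(\cZ)$ of the set of spherical isometries, because the boundedness/compactness argument needs the evaluation sup-norm to be a genuine norm on the quotient $\cP_{2d}(\cZ)$. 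Your cone $\cC_{2d}+V$ is taken modulo only $\cS\cap\cP_{2d}$, and identifying that quotient with the one modulo $I(\cZ)$ is precisely a Nullstellensatz statement; so Theorem \ref{thm:null2} cannot be left ``in the background'' of Step 2 --- either you work modulo $I(\cZ)$ throughout and invoke the Nullstellensatz at the very end (as the paper does in Theorem \ref{thm:pos1}), or you must supply an actual proof of closedness of $\cC_{2d}+V$, which the archimedean remark does not provide.

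Step 4 also omits a necessary verification: $f\in\C\axs$ involves the starred variables, so to compute $f(T)[1]$ you must know how the adjoints act, namely that $\langle X_j a,b\rangle=\langle a,X_j^\ss b\rangle$ for classes of degree at most $d-1$, i.e.\ that the compression of $T_j^*$ to that subspace is multiplication by $X_j^\ss$ (this is the step spelled out in \cite{HMP1} and in the proof of Theorem \ref{thm:pos1}); only then does $f(T)[1]=[f]$ follow for $\deg f\le d-1$. Finally, the dimension bookkeeping at the end of Step 4 is muddled: no enlargement of $H$ is needed, since the column isometry maps the degree-$(d-1)$ subspace into $\bigoplus_{j=1}^g H$, whose dimension exceeds $\dim H$, so it extends to an isometric column defined on all of $H$; and Example \ref{exa:bounds}(2) is a Nullstellensatz size bound and has nothing to do with absorbing such a factor.
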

\begin{proof}[Proof of Corollaries {\rm \ref{c:pos1}} and {\rm\ref{c:pos2}}]
Let $\cZ$ be the set of all tuples of unitaries (resp. spherical isometries). If a polynomial $f$ is positive on $\cZ$, then $f$ is of the form \eqref{e:psd} by \cite[Theorem 4.1]{HMP1} and the statement follows by Theorem \ref{thm:null1} (resp. Theorem \ref{thm:null2}).
\end{proof}

Finally, we adapt the proof of \cite{HMP1} to yield a Positivstellensatz 
for nc unitary groups, characterizing all nc polynomials positive on partitioned unitaries.

\begin{thm}\label{thm:pos1}
Let $\cZ$ be the set of $g$-partitioned unitaries with blocks of size $(2g^2+1)^d$. If a polynomial $f$ of degree $d-1$ is positive on $\cZ$, then
$$
f\in   
\cC_{2d}
+(I_g-\x^{\ss}\x,\, I_g-\x\x^{\ss}).$$
\end{thm}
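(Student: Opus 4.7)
The plan is to adapt the Hahn--Banach plus truncated GNS argument of \cite[Theorem 4.1]{HMP1} to the setting of the nc unitary group, in the same spirit as the proofs of Corollaries \ref{c:pos1} and \ref{c:pos2}.

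Form the convex cone
$$
\cC := \cC_{2d} + (\cU \cap \cP_{2d})
$$
inside the finite-dimensional vector space $\cP_{2d}$; since $\cC_{2d}$ is a closed cone and $\cU\cap\cP_{2d}$ is a linear subspace, $\cC$ is closed. Suppose for contradiction that $f\notin\cC$. By Hahn--Banach there exists a $\C$-linear functional $L:\cP_{2d}\to\C$ with $L(p^{\ss} p)\ge 0$ for all $p\in\cP_d$, $L$ vanishing on $\cU\cap\cP_{2d}$, and $L(f)<0$. Build the truncated GNS Hilbert space $H:=\cP_d/N$ with inner product induced by $(p,q)\mapsto L(p^{\ss} q)$, where $N$ is the corresponding null space (Hermitian symmetry of the form follows by polarization from $L(p^\ss p)\ge 0$). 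Since $\cP_d$ is spanned by words of length at most $d$ in the $2g^2$ noncommuting letters $\{X_{ij},X_{ij}^\ss\}$, we get the crucial bound $\dim H \le (2g^2+1)^d$.

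Left multiplication by each entry $X_{ij}$ induces an operator $\pi(X_{ij})$ on the image of $\cP_{d-1}$ in $H$, with $\pi(X_{ij})^*[p]=[X_{ij}^{\ss}p]$ for $\deg p\le d-1$. A short computation pairing $\pi(X_{ki})^* \pi(X_{kj})$ and $\pi(X_{ik}) \pi(X_{jk})^*$ against $[p],[q]$ coming from $\cP_{d-2}$, combined with the vanishing of $L$ on $\cU\cap\cP_{2d}$, yields the two block unitary identities
$$
\sum_k \pi(X_{ki})^*\pi(X_{kj}) = \delta_{ij}I_H, \qquad \sum_k \pi(X_{ik})\pi(X_{jk})^* = \delta_{ij}I_H.
$$
Consequently, $\hA := [\pi(X_{ij})]_{i,j=1}^g$ is a $g$-partitioned unitary on $H^g$ with blocks of size at most $(2g^2+1)^d$, so $\hA\in \cZ$. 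Since $\deg f\le d-1$, an iterated evaluation gives $f(\hA)[1]=[f]$, hence
$$
0 \le \langle [1], f(\hA)[1]\rangle = L(f) < 0,
$$
a contradiction. Therefore $f\in\cC\subseteq\cC_{2d}+(I_g-\x^\ss\x,\, I_g-\x\x^\ss)$.

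The main obstacle will be rigorously verifying that $\hA$ is a genuine $g$-partitioned unitary on $H^g$ rather than an operator satisfying the unitary relations only in a weak sense on low-degree vectors. This is the same degree-bookkeeping technicality as in the proof of \cite[Theorem 4.1]{HMP1}: one must carefully track how each application of an entry $X_{ij}$ or $X_{ij}^\ss$ raises the word-length so that the operator identities propagate far enough for the degree $d-1$ polynomial $f$ to be evaluated as an honest matrix polynomial at an honest element of $\cZ$. Once this is in place, the contradiction with $L(f)<0$ is immediate, and no appeal to the $*$-Nullstellensatz (Theorem \ref{thm:null3}) is needed since the ideal $\cU$ appears directly in the cone $\cC$ being separated from.
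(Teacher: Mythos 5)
Your overall strategy (cone separation followed by a truncated GNS construction and evaluation at a partitioned unitary) is the paper's strategy, but the two steps you treat as routine are exactly the substance of the proof, and as written both are gaps. First, the closedness of $\cC_{2d}+(\cU\cap\cP_{2d})$ does not follow from ``closed cone plus linear subspace'': in finite dimensions the Minkowski sum of a closed convex cone and a subspace can fail to be closed, and the standard recession-cone criterion is unavailable here because $\cC_{2d}\cap\cU\neq\{0\}$ for $d\ge 2$ (take $p^{\ss}p$ with $p$ one of the defining relations of $\cU$). The paper avoids this by working in the quotient $\cP_{2d}(\cZ)$ by the \emph{vanishing ideal} $I(\cZ)$ and citing \cite[Lemma 3.2]{HMP1} for closedness of $\cC_{2d}(\cZ)$; that lemma rests on the fact that $\sum_i p_i^{\ss}p_i\in I(\cZ)$ forces each $p_i\in I(\cZ)$, which is immediate for a vanishing ideal but is a real-ideal property of $\cU$ that you would have to establish separately -- essentially via the $*$-Nullstellensatz. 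This is also why Theorem \ref{thm:null3} cannot be dispensed with: in the paper it is precisely what converts the separation conclusion $[f]\in\cC_{2d}(\cZ)$, i.e.\ $f\in\cC_{2d}+I(\cZ)$, into $f\in\cC_{2d}+\cU$. Your closing remark that no appeal to Theorem \ref{thm:null3} is needed is where the missing argument is hiding.

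Second, the operators $\pi(X_{ij})$ are defined only on the image of $\cP_{d-1}$, not on all of $H$, so the displayed identities $\sum_k\pi(X_{ki})^*\pi(X_{kj})=\delta_{ij}I_H$ cannot hold as operator identities on $H$; the computation with $L$ gives them only when paired against vectors coming from $\cP_{d-1}$. You flag this as ``the main obstacle'' but do not resolve it, and it is not mere bookkeeping: it is the one genuinely new construction in the proof. The paper's resolution is to observe that the columns $U_j=\bigoplus_i X_{ij}:\cM\to\bigoplus_{i=1}^g\cH$ (with $\cM=\cP_{d-1}(\cZ)$, $\cH=\cP_d(\cZ)$) are pairwise orthogonal isometries, and then to use a dimension count: the orthogonal complement of $\sum_j\im U_j$ has dimension $g\dim\cN$ with $\cN=\cH\ominus\cM$, so it splits into $g$ pairwise orthogonal copies $\cN_j\cong\cN$, and extending each $U_j$ by an isometry $\cN\to\cN_j$ yields operators $V_j$ on all of $\cH$ whose block matrix $\ua=\bigl((V_j)_i\bigr)$ satisfies $\ua^*\ua=I_g$ exactly (the relation $\ua\ua^*=I_g$ then comes for free from finite dimensionality, not from any weak computation). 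Only after this extension is $\ua$ an honest element of $\cZ$, so that the hypothesis ``$f$ positive on $\cZ$'' applies and $\langle f(\ua)[1],[1]\rangle\ge 0$ contradicts $L(f)<0$; without it the final inequality has no justification. If you supply the closedness argument (via the quotient by $I(\cZ)$ and Theorem \ref{thm:null3}) and this extension construction, your outline becomes the paper's proof.
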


\begin{proof}
Let $I(\cZ)\subset\C\axs$ be the ideal of all polynomials vanishing on $\cZ$, and set
$$\cP_d(\cZ)=\frac{\cP_d+I(\cZ)}{I(\cZ)},\quad 
\cC_{2d}(\cZ)=\frac{\cC_{2d}+I(\cZ)}{I(\cZ)}.$$
We write $[p]$ to indicate the class of $p$ in these quotients. For the following proof to work, it is crucial that $\cC_{2d}(\cZ)$ is closed in $\cP_{2d}(\cZ)$ \cite[Lemma 3.2]{HMP1}.

By this notation, we have $f\in\cP_{d-1}$ and it is enough to show $[f]\in \cC_{2d}(\cZ)$ by Theorem \ref{thm:null3}.

Let $f\in \cP_{d-1}$ and suppose $f$ is positive on $\cZ$ but $[f]\notin \cC_{2d}(\cZ)$. As was shown in \cite{HMP1}, there is a linear functional $L$ on $\cP_{2d}(\cZ)$ such that $L(f)<0$ and $L(c)>0$ for $c\in \cC_{2d}(\cZ)\setminus\{0\}$. Let $\Lambda$ be the functional on $\cP_{2d}$ obtained by pulling back $L$. Then
$$\langle a,b \rangle=\frac{1}{2}\Lambda(a^{\ss} b+b^{\ss} a)$$
is a Hermitian positive semi-definite form on $\cP_d$ and its associated Hilbert space is $\cH=\cP_d(\cZ)$. Note that $\dim\cH\le\dim\cP_d\le (2g^2+1)^d$, so $f$ is positive on $g$-partitioned unitaries whose blocks are operators on $\cH$. Let $\cM=\cP_{d-1}(\cZ)$ be its subspace and $\cN$ the orthogonal complement of $\cM$ in $\cH$.

For $1\le i,j\le g$, define $X_{ij},Y_{ij}:\cM\to\cH$ by $X_{ij}[p]=[x_{ij}p]$ and 
$Y_{ij}[p]=[x_{ij}^{\ss} p]$. Then
$$\langle X_{ij}a,b\rangle=\langle a,Y_{ij}b\rangle$$
for $a,b\in\cM$. Thus the restriction of $X_{ij}^*$ to $\cM$ is $Y_{ij}$; i.e., $X_{ij}^*$ on $\cM$ is multiplication by $x_{ij}^{\ss}$.

Let
$$U_j: \cM\to \bigoplus_{i=1}^g\cH,\qquad
U_j=\bigoplus_{i=1}^g X_{ij}.$$
Considering the inner product on $\bigoplus \cH$, we have
$$\langle U_ia,U_jb\rangle =\delta_{ij}\langle a,b\rangle$$
for all $a,b\in\cM$, where $\delta_{ij}$ is the Kronecker's delta. Therefore $\{U_j\}_j$ are pairwise orthogonal isometries. Since
$$\dim \left(\sum_{j=1}^g \im U_j\right) = g \dim \cM = g (\dim\cH-\dim \cN),$$
there exist pairwise orthogonal subspaces $\cN_1,\dots,\cN_g \subset \bigoplus\cH$ of the same dimension $\dim \cN_j=\dim\cN$ such that
$$\sum_{j=1}^g \cN_j=\left(\sum_{j=1}^g \im U_j\right)^\bot.$$
We can choose isometric isomorphisms $\cN\cong \cN_j$ and extend $U_j$ to $V_j:\cH\to\bigoplus\cH$ by
$$V_j|_{\cN}:\cN\to \cN_j\hookrightarrow \bigoplus_{i=1}^g\cH.$$
These maps are also pairwise orthogonal isometries. If $A_{ij}=(V_j)_i$, then the $g^2$-tuple $\ua$ satisfies 
$\ua^* \ua=I_g$ and is therefore a $g$-partitioned unitary whose blocks are operators on $\cH$.

As in \cite{HMP1}, one can verify that the restrictions of $A_{ij}$ to $\cM$ are $X_{ij}$ and that the compressions of $A_{ij}^*$ to $\cM$ are $Y_{ij}$. The rest also follows as in \cite{HMP1}: since $f$ is of degree $d-1$, $f(\ua)[1]=f(\x)[1]=[f]$. By the assumption $f(\ub)$ is positive semi-definite for $\ub\in\cZ$, so $[f]=[f^{\ss}]$. But then
$$\langle f(\ua)[1],[1]\rangle=\langle [f],[1]\rangle
=\frac{1}{2}(\Lambda(f)+\Lambda(f^{\ss}))=\Lambda(f)<0,$$
a contradiction.
\end{proof}

\begin{rem}
As in Remark \ref{r:realcase} we also obtain Positivstellens\"atze in the real setting that are analogous to Corollaries \ref{c:pos1}, \ref{c:pos2} and Theorem \ref{thm:pos1}.
\end{rem}

\begin{appendices}

\section{Alternative proof of $*$-Nullstellens\"atze}\label{sec:rfd}

In this appendix we give alternative independent proofs of the $*$-Nullstellens\"atze of Subsection 
\ref{subs:starnull}. These proofs are inspired by functional analytic ideas and do not
yield size bounds.

We say that $\C$-algebra with involution $\cA$ is {\bf residually finite-dimensional (rfd)} if it has a separating family of finite-dimensional $*$-representations, that is, for every $a\in\cA\setminus\{0\}$ there exists $n\in \N$ and a $*$-homomorphism $\varphi:\cA\to M_n(\C)$ with $\varphi(a)\neq0$. Here the involution on $M_n(\C)$ is given by conjugate transpose. Thus a $*$-ideal $\cI\subset\frc{\x,\x^\ss}$ satisfies the $*$-Nullstellensatz property if and only if $\frc{\x,\x^\ss}\!/\cI$ is rfd.

\begin{lem}\label{lem2}
Let $\cA$ be a rfd $\C$-algebra. If a set $\{a_1,\dots,a_n\}\subset \cA$ is linearly independent, then there exists a finite-dimensional $*$-representation $\pi$ of $\cA$ such that $\{\pi(a_1),\dots,\pi(a_n)\}$ is linearly independent.
\end{lem}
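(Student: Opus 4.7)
The plan is to proceed by induction on $n$. The base case $n=1$ is immediate: since $a_1\neq 0$, the rfd hypothesis directly supplies a finite-dimensional $*$-representation $\pi$ with $\pi(a_1)\neq 0$.

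For the inductive step, I assume the claim for $n-1$ and take a linearly independent set $\{a_1,\dots,a_n\}\subset\cA$. Applying the induction hypothesis to $\{a_1,\dots,a_{n-1}\}$ produces a finite-dimensional $*$-representation $\pi$ of $\cA$ for which $\pi(a_1),\dots,\pi(a_{n-1})$ are linearly independent. If $\pi(a_n)$ happens to lie outside their linear span, $\pi$ itself already does the job. Otherwise I write $\pi(a_n)=\sum_{i=1}^{n-1}c_i\pi(a_i)$ for suitable $c_i\in\C$ and set $b=a_n-\sum_{i=1}^{n-1}c_i a_i$. Linear independence of $\{a_1,\dots,a_n\}$ forces $b\neq 0$, so a second application of the rfd hypothesis yields a finite-dimensional $*$-representation $\rho$ with $\rho(b)\neq 0$.

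I then claim that $\pi\oplus\rho$ is the desired representation. Indeed, suppose $\sum_{i=1}^n d_i(\pi\oplus\rho)(a_i)=0$. Projecting onto the $\pi$-summand and substituting $\pi(a_n)=\sum_{i<n} c_i\pi(a_i)$ gives $\sum_{i<n}(d_i+d_n c_i)\pi(a_i)=0$, whence $d_i=-d_n c_i$ for $i<n$ by the independence of $\pi(a_1),\dots,\pi(a_{n-1})$. Projecting onto the $\rho$-summand and using these relations collapses the sum to $d_n\rho(b)=0$; since $\rho(b)\neq 0$, we conclude $d_n=0$, and hence $d_i=0$ for all $i$. The direct sum of two finite-dimensional $*$-representations is again a finite-dimensional $*$-representation, which closes the induction.

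I do not anticipate any real obstacle here; the argument reduces to elementary linear algebra once the rfd hypothesis is invoked twice, once inductively on $\{a_1,\dots,a_{n-1}\}$ and once on the single element $b$ that witnesses the failure of $\pi$ to separate $a_n$ from the rest. The only routine point worth verifying is that $\pi\oplus\rho$ remains a bona fide finite-dimensional $*$-representation, which is standard. As a cleaner alternative worth mentioning, one could instead note that the intersection of the kernels $\ker\pi\cap V$ over all finite-dimensional $*$-representations $\pi$ is $\{0\}$ on the finite-dimensional subspace $V=\operatorname{span}\{a_1,\dots,a_n\}$, so by the descending chain condition on subspaces of $V$ a finite sub-intersection already collapses to $\{0\}$; the corresponding direct sum works.
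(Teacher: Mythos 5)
Your proof is correct and is essentially the paper's argument run forwards: where the paper assumes a minimal counterexample and extracts from a representation $\rho$ a fixed relation $a_1+\sum_{i>1}\mu_i a_i$ that the rfd property then kills, you directly form the defect element $b=a_n-\sum_{i<n}c_i a_i$, separate it with $\rho$ via rfd, and verify that $\pi\oplus\rho$ does the job — the same direct-sum mechanism with the logic inverted. Your closing remark (intersecting the kernels on the finite-dimensional span and invoking the descending chain condition to get a finite separating family) is also a valid, slightly slicker variant.
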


\begin{proof} Assuming this statement is not true, choose $n$ minimal such that the claim is false; then clearly $n>1$. Therefore for every finite-dimensional $*$-representation $\pi$ of $\cA$ there exist $\lambda_i^\pi\in \C$, not all zero, such that
$$\sum_i\lambda_i^\pi\pi(a_i)=0.$$
By the minimality, there exists a finite-dimensional $*$-representation $\rho$ of $\cA$ such that $\{\pi(a_2),\dots,\pi(a_n)\}$ is linearly independent and
$$\rho(a_1)+\sum_{i>1}\mu_i\rho(a_i)=0$$
for some $\mu_i\in \C$. Considering the $*$-representation $\pi\oplus\rho$, we have
$$\sum_i\lambda_i^{\pi\oplus\rho}\pi(a_i)=0,\quad \sum_i\lambda_i^{\pi\oplus\rho}\rho(a_i)=0$$
and $\lambda_1^{\pi\oplus\rho}\neq0$ for every $\pi$. Since
$$\sum_{i>1}(\lambda_1^{\pi\oplus\rho}\mu_i-\lambda_i^{\pi\oplus\rho})\rho(a_i)=0,$$
we conclude $\lambda_i^{\pi\oplus\rho}=\lambda_1^{\pi\oplus\rho}\mu_i$. Therefore
$$\sum_i\mu_i\pi(a_i)=0$$
holds for all $*$-representations $\pi$. Hence $\sum_i\mu_ia_i=0$ by assumption, so $\{a_1,\dots,a_n\}$ is linearly dependent in $\cA$, a contradiction.
\end{proof}

\begin{prop}\label{prop4}
If $\cA_1$ and $\cA_2$ are rfd $\C$-algebras, then $\cA_1*_{\C} \cA_2$ is rfd $\C$-algebra.
\end{prop}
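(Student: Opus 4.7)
The aim is to show that any nonzero $a\in\cA_1*_\C\cA_2$ admits a finite-dimensional $*$-representation not vanishing on $a$. My plan is to combine Lemma~\ref{lem2} with a free-product-of-representations construction (in the spirit of Boca--Voiculescu), and then extract a finite-dimensional piece. As a first step, fix $*$-invariant vector-space complements $\cA_i=\C\cdot1\oplus\cA_i^0$, giving the canonical direct-sum decomposition
\[
\cA_1*_\C\cA_2=\C\cdot1\oplus\bigoplus_{n\ge1}\bigoplus_{\substack{w=(i_1,\ldots,i_n)\\ i_k\ne i_{k+1}}}\cA_{i_1}^0\otimes_\C\cdots\otimes_\C\cA_{i_n}^0.
\]
Writing $a=\lambda\cdot1+\sum_w a_w$, either $\lambda\ne0$ (in which case any unital rep separates $a$) or some shape $w=(i_1,\ldots,i_n)$ has $a_w\ne0$. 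In the latter case, fix finite-dimensional $*$-invariant subspaces $V_i\subset\cA_i$ containing the unit and every element of $\cA_i$ appearing in a chosen finite tensor expansion of each $a_w$.

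Lemma~\ref{lem2} now yields finite-dimensional $*$-representations $\pi_i:\cA_i\to B(\cK_i)$ that are injective on $V_i$. After amplifying if necessary, I pick unit vectors $\xi_i\in\cK_i$ and split $\cK_i=\C\xi_i\oplus\cK_i^0$ so that the vector state $\phi_i(x)=\langle\pi_i(x)\xi_i,\xi_i\rangle$ vanishes on $\cA_i^0$. The algebraic Boca--Voiculescu construction then promotes $\pi_1,\pi_2$ to unital $*$-representations $\tilde\pi_i:\cA_i\to\End(\cF)$ on the Fock space
\[
\cF=\C\Omega\oplus\bigoplus_{n\ge1}\bigoplus_{i_1\ne\cdots\ne i_n}\cK_{i_1}^0\otimes\cdots\otimes\cK_{i_n}^0,
\]
and by the universal property of the coproduct these assemble into a $*$-representation $\tilde\pi=\tilde\pi_1*\tilde\pi_2$ of $\cA_1*_\C\cA_2$. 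A shape-by-shape computation of $\tilde\pi(a_w)\Omega$ places its contribution in the $\cK_{i_1}^0\otimes\cdots\otimes\cK_{i_n}^0$ summand; distinct shapes live in orthogonal summands of $\cF$, and slot-by-slot injectivity of the $\pi_{i_j}$ on $V_{i_j}$ ensures the chosen shape contribution is nonzero. Therefore $\tilde\pi(a)\ne0$.

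The main obstacle is that $\cF$ is infinite-dimensional and each $\tilde\pi_i$ raises word length by up to one, so no naive length truncation $\cF_{\le N}$ is $\tilde\pi_i$-invariant. I would circumvent this in the style of Exel--Loring: the subspace of $\cF$ generated by $\Omega$ under products of at most $n$ elements from $\pi_i(V_i)$ is already finite-dimensional and witnesses $\tilde\pi(a)\Omega\ne0$. Taking a sufficiently large finite direct sum of such local models, glued so as to be jointly $\cA_1$- and $\cA_2$-invariant, and then passing to an invariant finite-dimensional quotient on which both $\tilde\pi_i$ remain $*$-homomorphisms, produces the required finite-dimensional $*$-representation separating $a$. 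Making this last reduction precise -- in particular verifying that the truncated-and-glued model respects the involution -- is the delicate point, and is where the construction has to be executed carefully.
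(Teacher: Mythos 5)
Your construction gets as far as a separating family of \emph{infinite-dimensional} $*$-representations, but the crux of Proposition \ref{prop4} is finite-dimensionality, and that is exactly the step you leave open. The Fock-space representation $\tilde\pi_1*\tilde\pi_2$ has, in general, no finite-dimensional invariant or co-invariant pieces to pass to: the span of $\Omega$ under words of bounded length is not $\tilde\pi_i$-invariant, compressing to a non-invariant subspace destroys multiplicativity, and quotienting by an invariant (non-reducing) subspace destroys compatibility with the involution, since the adjoint action does not descend. So ``truncate, glue, and pass to an invariant finite-dimensional quotient'' is not a routine reduction; making it work is essentially the content of the Exel--Loring theorem itself (already the special case $\cA_i=\C[\Z]$ amounts to residual finiteness phenomena for free products), and your sketch does not supply a substitute argument. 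There is also a smaller unaddressed point earlier: nonvanishing of the top tensor component of $\tilde\pi(a_w)\Omega$ needs injectivity of the maps $V_{i}\cap\cA_i^0\to\cK_i^0$, $x\mapsto P^0\pi_i(x)\xi_i$, which does not follow from $\pi_i$ being injective on $V_i$ (the vector $\xi_i$ need not be separating), and the requirement that the vector state kill a complement $\cA_i^0$ chosen in advance is in tension with choosing $\pi_i$ afterwards; both are fixable (choose the complement adapted to a state and enlarge the representation GNS-style), but as written they are gaps too.

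For comparison, the paper avoids the infinite-dimensional detour altogether: given a finite linearly independent set in $\cA_1*_\C\cA_2$, it spans it by alternating words over finite independent sets $S_i\subset\cA_i$, uses Lemma \ref{lem2} to get finite-dimensional $*$-representations $\pi_i$ keeping $S_i$ independent, and pushes everything into $M_{d_1}(\C)*_\C M_{d_2}(\C)$ via the universal property. The problem is thereby reduced to showing that a free product of two matrix algebras is rfd, and this is settled by quoting Avitzour \cite{Avi} (the algebraic free product $*$-embeds into the C*-algebraic free product) together with Exel--Loring \cite{EL} (that C*-free product is rfd). If you want to keep your direct approach, you would in effect have to reprove the Exel--Loring result; the economical fix is to follow the paper and cite it after the reduction to matrix algebras.
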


\begin{proof} Let $S_i$ be an arbitrary finite linearly independent subset of $\cA_i$. For $n\in \N$ let $S(n,S_1,S_2)$ be a subset of $\cA_1*_{\C} \cA_2$, whose elements are words over $S_1\cup S_2$ of length at most $n$ that do not contain two consecutive elements from $S_1$ or $S_2$. By Lemma \ref{lem2}, there exists a finite-dimensional $*$-representation $\pi_i:\cA_i\to M_{d_i}(\C)$ such that $\pi_i(S_i)$ is linearly independent set. By the universal property of the free product in the category of $\C$-algebras with involution, there exists a $*$-homomorphism 
$$\pi=\pi_1*_{\C}\pi_2:\cA_1*_{\C} \cA_2\to M_{d_1}(\C)*_{\C} M_{d_2}(\C);$$
the set $\pi(S(n,S_1,S_2))$ is linearly independent by construction. Since every element of 
$\cA_1*_{\C} \cA_2$ lies in the linear span of some set $S(n,A_1,A_2)$, it suffices to prove that $M_{d_1}(\C)*_{\C} M_{d_2}(\C)$ is rfd.

By \cite[Proposition 2.3]{Avi}, $M_{d_1}(\C)*_{\C} M_{d_2}(\C)$ $*$-embeds into a C*-algebra of linear operators on a Hilbert space, therefore it also $*$-embeds into a free product of $M_{d_1}(\C)$ and $M_{d_2}(\C)$ in the category of C*-algebras; the latter is rfd by \cite[Theorem 3.2]{EL}, so the assertion holds.
\end{proof}

\begin{cor}\label{cor0}
Every finite free product of nc unitary groups is a rfd algebra.
\end{cor}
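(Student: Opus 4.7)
The plan is to combine the $*$-Nullstellensatz for nc unitary groups (Theorem \ref{thm:null3}) with the free product closure result (Proposition \ref{prop4}) via induction on the number of factors.

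First, I observe that by the remark preceding Lemma \ref{lem2}, a $*$-ideal $\cI\subset\frc{\x,\x^\ss}$ satisfies the $*$-Nullstellensatz property exactly when $\frc{\x,\x^\ss}\!/\cI$ is rfd. For a single nc unitary group $\cA=\frc{\x,\x^\ss}\!/\cU$, Theorem \ref{thm:null3} says precisely that every nonzero $p\in\frc{\x,\x^\ss}$ with $p\notin\cU$ has a $g$-partitioned unitary on which it does not vanish. Such a partitioned unitary is exactly a finite-dimensional $*$-representation of $\cA$, so $\cA$ is rfd.

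Next, I would perform induction on the number $k$ of factors. The base case $k=1$ is the previous paragraph. For the inductive step, suppose $\cA_1,\dots,\cA_{k+1}$ are nc unitary groups and set $\cB=\cA_1*_\C\cdots *_\C\cA_k$; by the induction hypothesis $\cB$ is rfd, and $\cA_{k+1}$ is rfd by the base case, so Proposition \ref{prop4} gives that
\[
\cA_1*_\C\cdots *_\C\cA_{k+1}\cong \cB*_\C \cA_{k+1}
\]
is rfd.

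Since essentially all the work is packed into the two cited results, there is no real obstacle remaining; the only point worth flagging is that Proposition \ref{prop4} is stated for the free product $*_\C$ in the category of $\C$-algebras with involution, which is precisely the structure in which finite free products of nc unitary groups are formed, so the inductive step applies verbatim.
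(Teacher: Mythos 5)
Your argument is correct as a proof of the statement, but it takes a genuinely different route from the paper in the key step. You obtain the base case (a single nc unitary group is rfd) from Theorem \ref{thm:null3}, i.e.\ from the main text's $*$-Nullstellensatz, whereas the paper deliberately avoids this: it quotes the external result of Glockner and von Waldenfels \cite[Theorem 1.2]{GW} that $\frc{\x,\x^\ss}\!/\cU$ is rfd, and then applies Proposition \ref{prop4} exactly as you do (your explicit induction on the number of factors is the same iteration the paper leaves implicit). The reason for the paper's choice is the stated purpose of the appendix: Corollary \ref{cor0} is meant to yield \emph{alternative, independent} proofs of the $*$-Nullstellens\"atze --- immediately after it the paper notes that Theorems \ref{thm:null1} and \ref{thm:null3} ``can now be viewed as various versions of Corollary \ref{cor0}'' and re-proves Theorem \ref{thm:null2} from it. With your proof the corollary is a formal consequence of Theorem \ref{thm:null3}, so it is logically sound (no circularity, since Theorem \ref{thm:null3} is proved in the main text without the appendix) but it carries no new content and cannot serve as an independent functional-analytic rederivation; the paper's citation of \cite{GW} is what buys that independence, at the cost of relying on an outside theorem. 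Two small points to tidy in your version: Theorem \ref{thm:null3} is stated only for $g>1$, so a factor which is the nc unitary group on a $1\times 1$ matrix must be handled by Theorem \ref{thm:null1} instead; and you should note (as you implicitly do) that a $g$-partitioned unitary gives a genuine finite-dimensional $*$-representation because $\ua^*\ua=I_g$ forces $\ua\ua^*=I_g$ in finite dimensions.
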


\begin{proof}
The algebra $\fr{\x,\x^\ss}\!/\cU$ is rfd by \cite[Theorem 1.2]{GW}. The statement then holds by Proposition \ref{prop4}.
\end{proof}

The Nullstellens\"atze of Remark \ref{r:general} (including Theorem \ref{thm:null1} and Theorem \ref{thm:null3}) can now be viewed as various versions of Corollary \ref{cor0}. However, we do not obtain any bounds by this method. We can also give another proof of Theorem \ref{thm:null2}.

\begin{proof}[Proof of Theorem {\rm\ref{thm:null2}}]
The algebra $\kk\axs\!/\cS$ embeds into $\kk\axs\!/\cU$ under a $*$-homo\-morphism analogous to the embedding in the proof of Corollary \ref{c:exfrr2}. Hence it is rfd by Corollary \ref{cor0}, so $\cS$ satisfies the $*$-Nullstellensatz property.
\end{proof}

\end{appendices}

\linespread{1.05}

\end{document}